\documentclass[letterpaper,11pt]{amsart}


\usepackage[margin=1.2in]{geometry}
\usepackage{amsmath,amsthm,amssymb}
\usepackage{xspace,xcolor}
\usepackage[breaklinks,colorlinks,citecolor=teal,linkcolor=teal,urlcolor=teal,pagebackref,hyperindex]{hyperref}
\usepackage[alphabetic]{amsrefs}
\usepackage[all]{xy}
\usepackage{color}


\theoremstyle{plain}
\newtheorem{thm}{Theorem}[section]

\newtheorem{lem}[thm]{Lemma}
\newtheorem{prop}[thm]{Proposition}
\newtheorem{cor}[thm]{Corollary}

\theoremstyle{definition}
\newtheorem{defi}[thm]{Definition}

\newtheorem{eg}[thm]{Example}

\newtheorem{question}[thm]{Question}

\theoremstyle{remark}
\newtheorem{rmk}[thm]{Remark}



\def\Z{{\mathbf Z}}
\def\Q{{\mathbf Q}}

\def\C{{\mathbf C}}

\def\Qalg{\overline{\mathbf Q}}

\def\A{{\mathbf A}}

\def\cJ{\mathcal{J}}

\def\cM{\mathcal{M}}

\def\cO{\mathcal{O}}

\def\J{\mathcal{J}}

\def\fra{\mathfrak{a}}
\def\frb{\mathfrak{b}}
\def\frc{\mathfrak{c}}
\def\frm{\mathfrak{m}}

\def\.{\cdot}
\def\^{\widehat}

\def\({\left(}
\def\){\right)}

\newcommand{\llbracket}{[\negthinspace[}
\newcommand{\rrbracket}{]\negthinspace]}

\renewcommand{\and}{ \ \ \text{ and } \ \ }

\DeclareMathOperator{\Spec} {Spec}

\DeclareMathOperator{\ord} {ord}

\DeclareMathOperator{\lct} {lct}

\DeclareMathOperator{\moi}{moi}
\DeclareMathOperator{\Vol}{vol}
\def\oi{{_K\rm{oi}}}

\begin{document}

\author[R.~Cluckers]
{Raf Cluckers}
\address{Univ.~Lille,
CNRS, UMR 8524 - Laboratoire Paul Painlevé, F-59000 Lille, France, and
KU Leuven, Department of Mathematics, B-3001 Leu\-ven, Bel\-gium}
\email{Raf.Cluckers@univ-lille.fr}
\urladdr{http://rcluckers.perso.math.cnrs.fr/}

\author[J.~Koll\'{a}r]{J\'{a}nos Koll\'{a}r}
\address{Department of Mathematics, Princeton University, Fine Hall, Washington Road,
Princeton, NJ 08544}
\email{kollar@math.princeton.edu}

\author[M.~Musta{\c{t}}{\u{a}}]{Mircea Musta{\c{t}}{\u{a}}}
\address{Department of Mathematics, University of Michigan, 530 Church Street, Ann Arbor, MI 48109, USA}
\email{mmustata@umich.edu}

\thanks{R.C. was partially supported by the European Research Council under the European Community's Seventh Framework Programme (FP7/2007-2013) with ERC Grant Agreement nr. 615722
MOTMELSUM, by the Labex CEMPI  (ANR-11-LABX-0007-01), and by KU Leuven IF C14/17/083. J. K. was partially supported  by  the NSF under grant number 
DMS-1901855.  M.M. was partially supported by NSF grants DMS-2001132 and DMS-1952399.}

\subjclass[2010]{14B05, 14E18, 14J17, 11L07}

\begin{abstract}
We show that if $f$ is a nonzero, noninvertible function on a smooth complex variety $X$ and $J_f$ is the Jacobian
ideal of $f$, then $\lct(f,J_f^2)>1$ if and only if the hypersurface defined by $f$ has rational singularities. Moreover,
if it does not have rational singularities, then $\lct(f,J_f^2)=\lct(f)$. We give two proofs, one relying on arc spaces and one that
goes through the inequality
$\widetilde{\alpha}(f)\geq\lct(f,J_f^2)$, where $\widetilde{\alpha}(f)$ is the minimal exponent of $f$.
In the case of a polynomial over $\overline{\Q}$, we also prove an analogue of this latter inequality,
with $\widetilde{\alpha}(f)$  replaced by the motivic oscillation index $\moi(f)$. We also show a part of Igusa's strong monodromy conjecture, for poles larger than $-\lct(f,J_f^2)$.  We end with a discussion of \emph{lct-maximal} ideals: these are ideals $\fra$ with the property that $\lct(\fra)<\lct(\frb)$ for every $\frb$ with $\fra\subsetneq\frb$.
\end{abstract}

\title{The log canonical threshold and rational singularities}

\maketitle

\section{Introduction}

Given a smooth complex algebraic variety $X$ and a (nonempty) hypersurface $H$ in $X$ defined by $f\in\cO_X(X)$, the log canonical
threshold $\lct(f)$ measures how far the pair $(X,H)$ is from having log canonical singularities. In particular, we always have
$\lct(f)\leq 1$, with equality if and only if the pair $(X,H)$ is log canonical. The log canonical threshold $\lct({\mathfrak a})$ can be defined
more generally for every nonzero coherent ideal ${\mathfrak a}$ of $\cO_X$ (with the convention that $\lct(\cO_X)=\infty$) and it can happen
that $\lct({\mathfrak a})>1$.
For an introduction to singularities of pairs in our setting, we refer the reader to \cite[Chapter~9]{Lazarsfeld}.

In this note we show that one can use the log canonical
threshold of an ideal associated to $f$ in order to refine $\lct(f)$, so that we detect when the hypersurface $H$ has rational singularities.
Namely, we consider the Jacobian ideal $J_f$ of $f$ and the log canonical threshold $\lct(f,J_f^2)$ of the ideal $(f,J_f^2):=(f)+J_f^2$.
We show that $\lct(f,J_f^2)>1$ if and only if the hypersurface $H$ has rational singularities. More precisely, we have the following:

\begin{thm}\label{thmA_intro}
For every smooth complex algebraic variety $X$, and every nonzero, noninvertible $f\in \cO_X(X)$ defining the hypersurface
$H$ in $X$, the following hold:
\begin{enumerate}
\item[i)] If $H$ does not have rational singularities, then
$$\lct(f,J_f^2)=\lct(f).$$
In particular, we have $\lct(f,J_f^2)\leq 1$.
\item[ii)] If $H$ has rational singularities, then $\lct(f,J_f^2)>1$.
\end{enumerate}
\end{thm}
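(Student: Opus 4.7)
The plan is to split the theorem into two independent arguments: (i) will follow from the key inequality $\lct(f,J_f^2)\leq\widetilde{\alpha}(f)$ advertised in the abstract, while (ii) requires a more delicate arc-space computation.

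For (i), the inclusion $(f)\subseteq (f,J_f^2)$ gives the trivial bound $\lct(f)\leq \lct(f,J_f^2)$, and the inequality $\lct(f)\leq 1$ is automatic because $f$ is non-invertible. For the reverse inequality in the non-rational case, assume the main estimate
\[
\lct(f,J_f^2)\leq \widetilde{\alpha}(f),
\]
which I would establish via the $V$-filtration description of the minimal exponent: the point is that applying the derivations $\partial_{x_i}$ to a section $f^s$ produces terms belonging to $J_f$ while shifting the Hodge-theoretic weight by $1$, so that iterating once involves $J_f^2$ and shifts by $2$. Saito's theorem identifies ``$H$ does not have rational singularities'' with $\widetilde{\alpha}(f)\leq 1$, and in this regime the classical identity $\lct(f)=\min\bigl(1,\widetilde{\alpha}(f)\bigr)$ collapses to $\widetilde{\alpha}(f)=\lct(f)$. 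Sandwiching yields
\[
\lct(f)\leq \lct(f,J_f^2)\leq \widetilde{\alpha}(f)=\lct(f),
\]
so equality holds throughout.

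For (ii), I would switch to the Ein--Lazarsfeld--\Mustata formula expressing log canonical thresholds through contact loci in the arc space $X_\infty$. Since
$\ord_\gamma\bigl((f)+J_f^2\bigr)=\min\bigl(\ord_\gamma(f),2\ord_\gamma(J_f)\bigr)$,
one has
\[
\lct(f,J_f^2)=\inf_{p\geq 1}\frac{\jetcodim_{X_\infty}\bigl(\Cont^{\geq p}(f)\cap \Cont^{\geq \lceil p/2\rceil}(J_f)\bigr)}{p}.
\]
To prove each ratio strictly exceeds $1$, I would invoke \Mustata's theorem that a local complete intersection $H$ has rational singularities if and only if every jet scheme $H_m$ is irreducible of the expected dimension $(m+1)\dim H$. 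The bare condition $\ord_\gamma(f)\geq p$ cuts out a locus of jet-codimension exactly $p$; the extra constraint $\ord_\gamma(J_f)\geq \lceil p/2\rceil$ forces $\gamma$ to have substantial contact with the singular locus of $H$, and irreducibility of $H_m$ for appropriate $m$ prevents such arcs from filling a full codimension-$p$ component, yielding the desired strict inequality.

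The main obstacle is exactly this last conversion: turning the qualitative statement ``$H_m$ has the expected dimension'' into a \emph{uniform} strict inequality $\jetcodim\bigl(\Cont^{\geq p}(f)\cap\Cont^{\geq \lceil p/2\rceil}(J_f)\bigr)>p$ for every $p\geq 1$. The hardest range is when $\ord_\gamma(f)$ and $\ord_\gamma(J_f)$ are comparable, since the elementary bound $\ord_\gamma(J_f)\geq \ord_\gamma(f)-1$ obtained by differentiating $\gamma^\ast f$ along the arc is almost sharp there. I would expect to stratify the contact loci by the pair $(\ord_\gamma(f),\ord_\gamma(J_f))$ and, on each stratum, reduce to a dimension estimate on $H_m$, possibly after passing to a log resolution to localize to a monomial situation where the jet-scheme dimensions can be computed directly.
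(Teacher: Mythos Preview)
Your treatment of part~i) matches the paper's first proof exactly: sandwich $\lct(f,J_f^2)$ between $\lct(f)$ and $\widetilde{\alpha}(f)$, then invoke Saito's characterization of rational singularities to collapse the chain when $\widetilde{\alpha}(f)\leq 1$. One remark: your proposed route to the inequality $\widetilde{\alpha}(f)\geq\lct(f,J_f^2)$ via the $V$-filtration is only a gesture, and it is not how the paper proceeds. The paper instead shows that a general element $g$ of $(f,J_f^2)$ is \emph{formally equivalent} to $f$ at each singular point (Propositions~2.2 and~2.4), so $\widetilde{\alpha}(f)=\widetilde{\alpha}(g)$, and then bounds $\widetilde{\alpha}(g)\geq\lct(f,J_f^2)$ by a Bertini-type argument on a log resolution (Proposition~3.1). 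Since you are treating the inequality as a black box from the abstract, your argument for~i) is fine as stated.

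Part~ii), however, has a genuine gap, and you have also inverted the difficulty of the two halves. You yourself flag the obstacle: converting irreducibility of the jet schemes $H_m$ into a uniform strict inequality on the codimension of $\Cont^{\geq p}(f)\cap\Cont^{\geq\lceil p/2\rceil}(J_f)$ is not carried out, and the stratification-plus-log-resolution plan you sketch is not obviously going to close. In the paper, part~ii) is the \emph{easy} direction and takes three lines, with no arc spaces at all. The point is that for a hypersurface, rational singularities are equivalent to canonical singularities (Elkik), and $H$ canonical is equivalent to the pair $(X,H)$ being canonical (Stevens). The latter says precisely that $A_X(\ord_E)\geq\ord_E(f)+1$ for every \emph{exceptional} divisor $E$. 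Now if $\fra\supsetneq(f)$ and $\ord_E(\fra)>0$, then $E$ must be exceptional (since $H$ is reduced), so $A_X(\ord_E)\geq\ord_E(f)+1\geq\ord_E(\fra)+1$; running $E$ over a log resolution of $\fra$ gives $\lct(\fra)>1$ immediately. Applying this with $\fra=(f,J_f^2)$ finishes~ii). Your arc-space detour is not wrong in spirit, but it is both incomplete and unnecessary.
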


The same result holds in the complex analytic setting, that is, if $X$ is a complex manifold and $f$ is a holomorphic function (see Remark~\ref{analytic}).
We note that the interesting assertion in Theorem~\ref{thmA_intro} is the one in i),
as the assertion in ii) follows easily from known properties of rational singularities.
We give two stronger versions of the result in Theorem~\ref{thmA_intro}.
The first of these is formulated in terms of another invariant of singularities, Saito's
\emph{minimal exponent} $\widetilde{\alpha}(f)$.
When $H$ has isolated singularities, this has been also known as the \emph{complex singularity index} of $H$. In general,
it is defined as the negative of the largest root of $b_f(s)/(s+1)$, where $b_f(s)$ is the Bernstein-Sato polynomial of $f$ (with the convention
that if $f$ defines a smooth hypersurface, in which case $b_f(s)=s+1$, then $\widetilde{\alpha}(f)=\infty$).
It is a result of Lichtin and
Koll\'{a}r (see \cite[Theorem~1.6]{Kollar}) that $\lct(f)=\min\big\{\widetilde{\alpha}(f),1\big\}$ and it was shown by Saito (see \cite[Theorem~0.4]{Saito-B})
that $\widetilde{\alpha}(f)>1$ if and only if $H$ has rational singularities. We thus see that $\widetilde{\alpha}(f)$ behaves like the invariant $\lct(f,J_f^2)$
in the previous theorem.
We prove the following general inequality between these two invariants:

\begin{thm}\label{thmB_intro}
For every smooth complex algebraic variety $X$, and every nonzero, noninvertible $f\in \cO_X(X)$, we have
$$\widetilde{\alpha}(f)\geq\lct(f,J_f^2).$$
\end{thm}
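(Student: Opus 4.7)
The plan is to reduce the inequality to a divisor-by-divisor comparison on a log resolution, and then invoke a Hodge-theoretic description of the minimal exponent.

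First, I would take a common log resolution $\pi\colon Y\to X$ of the hypersurface $H$ and of the Jacobian ideal $J_f$. Writing $E_i$ for the prime components appearing in the support of $\pi^*(f)+\pi^*(J_f)$, set $a_i=\ord_{E_i}(\pi^*f)$, $b_i=\ord_{E_i}(\pi^*J_f)$, and $k_i=\ord_{E_i}(K_{Y/X})$. Standard log-resolution bookkeeping gives
\[
\lct(f,J_f^2)=\min_i\frac{k_i+1}{\min(a_i,\,2b_i)},
\]
so it suffices to show that for every $i$,
\[
\widetilde\alpha(f)\cdot\min(a_i,\,2b_i)\;\ge\;k_i+1.
\]

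For this divisorwise inequality I would invoke the description of the minimal exponent via higher Hodge ideals of Saito and \Mustata--Popa: writing $c=p+\beta$ with $p\in\Z_{\ge 0}$ and $\beta\in(0,1]$, one has $\widetilde\alpha(f)\ge c$ if and only if the Hodge ideal $I_p(\beta f)$ equals $\cO_X$. This ideal admits an explicit pushforward description on $Y$, in which $J_f$ enters through the Koszul-type complex computing the Hodge filtration on the localization $\cO_X(*H)$. The plan is then to translate the triviality of $I_p(\beta f)$ into a numerical condition on each $E_i$, and to check that this condition is implied by $(p+\beta)\cdot\min(a_i,2b_i)<k_i+1$.

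The main obstacle is precisely this divisor-by-divisor translation: the Hodge-ideal formula involves ceilings $\lceil\beta a_i\rceil$ and twists by $K_{Y/X}$ corrected by Jacobian contributions, and the argument must split into two regimes according to whether $a_i\le 2b_i$ or $a_i>2b_i$. This is where the exponent $2$ in $J_f^2$ becomes critical: it matches the way $p$ and the Jacobian multiplicities combine in the higher-Hodge-ideal formula. I expect the case $p=0$, $\beta\le 1$ to recover the classical Lichtin--Koll\'ar equality $\lct(f)=\min(\widetilde\alpha(f),1)$, with the genuinely new content appearing in the regime $c>1$ --- precisely the case of Theorem~\ref{thmA_intro}(ii), where $H$ has rational singularities.
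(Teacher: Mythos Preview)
Your reduction to a divisor-by-divisor inequality is where the argument breaks. The claim that it suffices to show $\widetilde\alpha(f)\cdot\min(a_i,2b_i)\ge k_i+1$ for \emph{every} $i$ is logically sufficient, but it is simply false. Take the cusp $f=x^2+y^3$ in $\A^2$: here $\widetilde\alpha(f)=\tfrac{1}{2}+\tfrac{1}{3}=\tfrac{5}{6}$, while the exceptional divisor $E_1$ of the blow-up of the origin has $a_1=2$, $b_1=\ord_{E_1}(x,y^2)=1$, $k_1=1$, giving $\tfrac{k_1+1}{\min(a_1,2b_1)}=1>\tfrac{5}{6}$. Any log resolution in dimension two dominates the minimal one, so $E_1$ cannot be avoided. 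More conceptually, $\widetilde\alpha(f)$ is not computed by a single divisorial valuation, and the known bound $\widetilde\alpha(f)\ge\min_i\tfrac{k_i+1}{a_i}$ from \cite{MP} and \cite{DM} is only a bound by the \emph{minimum}, not term by term; since $\min(a_i,2b_i)\le a_i$, your proposed inequality is at least as strong as the term-by-term version of that one.

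The remaining hope would be to prove the inequality only for the divisor computing $\lct(f,J_f^2)$, but your sketch gives no mechanism for singling it out. The assertion that $J_f$ enters the Hodge filtration on $\cO_X(*H)$ through a Koszul-type complex is not a result I can identify: the log-resolution formulas for Hodge ideals in \cite{MP} involve $\pi^*H$ and $K_{Y/X}$ but not $J_f$, and it is unclear how the case split $a_i\lessgtr 2b_i$ would interact with them.

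For comparison, the paper's proof avoids divisorwise analysis entirely. The key input is a finite-determinacy statement \`a la Tougeron: for $h\in J_f^2$ (and, when $\mult_P(f)=2$, a generic scalar), there is an automorphism of $\widehat{\cO_{X,P}}$ carrying $f$ to $f+h$ (Propositions~\ref{prop1_formal_equiv} and~\ref{prop2_formal_equiv}). Since the Bernstein--Sato polynomial, hence $\widetilde\alpha_P$, is invariant under formal equivalence (Lemma~\ref{invar_formal_equivalence}), a general element $g$ of the ideal $(f,J_f^2)$ satisfies $\widetilde\alpha_P(g)=\widetilde\alpha_P(f)$. One then only needs the general-linear-combination bound $\widetilde\alpha_P(g)\ge\lct_P(\fra)$ for $\fra=(f,J_f^2)$ (Proposition~\ref{gen_combination}), which does reduce to the $\min_i\tfrac{k_i+1}{a_i}$ bound on a log resolution of $\fra$ --- but applied to $g$, not to $f$.
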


Another improvement on Theorem~\ref{thmA_intro} is formulated in terms of multiplier and adjoint ideals. These are important objects in birational geometry,
for the definition and basic applications we refer to \cite[Chapter 9]{Lazarsfeld}.

\begin{thm}\label{corC_intro}
For every smooth complex algebraic variety $X$, and every nonzero, noninvertible $f\in \cO_X(X)$, we have
$$\cJ(X,f^{\lambda})=\cJ\big(X, (f,J_f^2)^{\lambda}\big)\quad\text{for all}\quad \lambda<1.$$
Moreover, if $f$ defines a reduced hypersurface and ${\rm adj}(f)$ is the adjoint ideal of $f$, then
$${\rm adj}(f)=\cJ\big(X, (f,J_f^2)\big).$$
\end{thm}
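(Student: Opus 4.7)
My plan is to derive Theorem~\ref{corC_intro} from Theorem~\ref{thmB_intro} by direct comparison on a common log resolution. Fix a log resolution $\mu\colon Y\to X$ of the ideal $(f)\cdot J_f$; write $\mu^{-1}\bigl((f)\bigr)\cdot\cO_Y=\cO_Y(-F)$ and $\mu^{-1}(J_f)\cdot\cO_Y=\cO_Y(-G)$, so that $F+G$ has simple normal crossings support, and set $K:=K_{Y/X}$. Denote by $a_E, b_E, k_E$ the coefficients of $F,G,K$ along a prime divisor $E\subset Y$. When $f$ defines a reduced hypersurface, decompose $F=\widetilde H+F_{\exc}$, with $\widetilde H$ the strict transform of $H=\{f=0\}$ and $F_{\exc}$ the $\mu$-exceptional part. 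The standard formulas yield
\begin{align*}
\cJ(X,f^{\lambda})&=\mu_*\cO_Y\bigl(K-\lfloor\lambda F\rfloor\bigr),\\
\cJ\bigl(X,(f,J_f^2)^{\lambda}\bigr)&=\mu_*\cO_Y\bigl(K-\lfloor\lambda\min(F,2G)\rfloor\bigr),\\
\adj(f)&=\mu_*\cO_Y(K-F_{\exc}).
\end{align*}
Since $\ord_{\widetilde H}(J_f)=0$, we have $\min(F,2G)\le F$ and $\min(F,2G)\le F_{\exc}$ as divisors, producing the automatic inclusions $\cJ(X,f^{\lambda})\subseteq\cJ\bigl(X,(f,J_f^2)^{\lambda}\bigr)$ and $\adj(f)\subseteq\cJ\bigl(X,(f,J_f^2)\bigr)$.

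The content of the theorem lies in the reverse inclusions. The divisorial discrepancies $\lfloor\lambda F\rfloor-\lfloor\lambda\min(F,2G)\rfloor$ and $F_{\exc}-\min(F,2G)$ are both supported on those exceptional $E$ with $2b_E<a_E$; such $E$ necessarily lie over $\Sing H$. The central technical input, to be extracted from the argument underlying Theorem~\ref{thmB_intro} applied to each individual divisorial valuation rather than to the minimum, is the divisor-level bound
\[
k_E+1\;\ge\;2b_E\quad\text{for every exceptional $E$ over $\Sing H$ with $2b_E<a_E$.}
\]
Equivalently, the ``candidate new jumping number'' $(k_E+1)/(2b_E)$ at such a divisor is always $\ge 1$, so no exceptional divisor with $2b_E<a_E$ produces a jumping number of $\cJ\bigl(X,(f,J_f^2)^{\lambda}\bigr)$ in the range $\lambda<1$.

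Granting this bound, part~(i) follows: for $\lambda<1$ and an exceptional $E$ with $2b_E<a_E$, one has $\lfloor 2\lambda b_E\rfloor\le 2b_E-1\le k_E$, so the extra ``room'' allowed at $E$ by replacing $\lfloor\lambda a_E\rfloor$ with $\lfloor 2\lambda b_E\rfloor$ is already absorbed by the discrepancy; a standard pushforward/local-vanishing argument (using that the difference divisor is $\mu$-exceptional of codimension $\ge 2$ in the image) then identifies $\mu_*\cO_Y\bigl(K-\lfloor\lambda\min(F,2G)\rfloor\bigr)$ with $\mu_*\cO_Y\bigl(K-\lfloor\lambda F\rfloor\bigr)$. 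Part~(ii) is obtained by the same reasoning at the value ``$\lambda=1$'': the sheaves $\cO_Y(K-F_{\exc})$ and $\cO_Y(K-\min(F,2G))$ differ precisely on exceptional $E$ with $2b_E<a_E$ by the amount $a_E-2b_E$, and the key bound again ensures that the difference is absorbed by $K$ after pushforward, so $\adj(f)=\cJ\bigl(X,(f,J_f^2)\bigr)$.

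The main obstacle I anticipate is establishing the divisor-level bound $k_E+1\ge 2b_E$. Theorem~\ref{thmB_intro} gives only the global version (a minimum over all divisorial valuations), while here I need the pointwise statement at each exceptional $E$ with $2b_E<a_E$. Making this precise will require revisiting the proof of Theorem~\ref{thmB_intro} and applying the underlying valuative/Hodge-theoretic estimate to the individual divisorial valuation defined by $E$ rather than just to the valuation that computes $\lct(f,J_f^2)$; the pushforward comparison that closes the proof is then routine once the divisorial inequality is in place.
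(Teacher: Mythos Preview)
Your proposed divisor-level inequality $k_E+1\ge 2b_E$ for every exceptional $E$ with $2b_E<a_E$ is \emph{false}, and this is where the plan breaks down. Take $f=x_1^3+x_2^3$ on $\A^2$ and pass to the blow-up chart $x_1=y_1,\ x_2=y_1y_2$, so $f=y_1^3g$ with $g=1+y_2^3$ and $J_f\cdot\cO=(y_1^2)$. The monomial valuation $v$ with $v(y_1)=2$ and $v(y_2+1)=3$ (centered at a point of $\{y_1=g=0\}$) has
\[
a_E=v(f)=9,\qquad b_E=v(J_f)=4,\qquad A_X(v)=k_E+1=7,
\]
so $2b_E=8<9=a_E$ but $k_E+1=7<8=2b_E$. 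This valuation appears as an honest prime divisor on a suitable log resolution, so your bound cannot hold on every log resolution of $(f)\cdot J_f$ (and you also need the resolution to principalize $(f,J_f^2)$, which a log resolution of $(f)\cdot J_f$ need not do). The proof of Theorem~\ref{thmB_intro} goes through formal equivalence and a generic-linear-combination argument; it controls only the \emph{minimum} of $A_X(v)/v(f,J_f^2)$ and yields nothing about an individual valuation, so there is no way to ``revisit'' it to get the pointwise estimate you need. Even granting your bound, the pushforward step is not the routine absorption you describe: knowing that the coefficient of $K-\lfloor\lambda\min(F,2G)\rfloor$ at a bad $E$ is $\ge 0$ does not force the (strictly stronger) condition $\ord_E(g)\ge\lfloor\lambda a_E\rfloor-k_E$ coming from $\cJ(X,f^\lambda)$; you would still have to show that the constraints at the \emph{good} divisors imply those at the bad ones, and no ``standard local vanishing'' statement does this.

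The paper does not deduce Theorem~\ref{corC_intro} from Theorem~\ref{thmB_intro} at all. It instead proves the arc-theoretic Theorem~\ref{thm3_intro}: whenever $0<v(J_f)<\tfrac12 v(f)$, there exists another divisorial valuation $w$ with $A_X(w)\le A_X(v)-1$, $w(f)\ge v(f)-1$, $w(g)\le v(g)$, and the same center. The proof of Theorem~\ref{corC_intro} is then a descent. Suppose $g\in\cJ\bigl(X,(f,J_f^2)^\lambda\bigr)\smallsetminus\cJ(X,f^\lambda)$ and pick a witnessing valuation $v=\ord_E$ with $v(g)+A_X(v)\le\lambda\, v(f)$ of \emph{minimal} log discrepancy. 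One rules out $v(J_f)\ge\tfrac12 v(f)$ (then $v(f,J_f^2)=v(f)$, contradicting $g\in\cJ\bigl(X,(f,J_f^2)^\lambda\bigr)$) and $v(J_f)=0$ (then $f$ is smooth near $c_X(v)$). In the remaining range, Theorem~\ref{thm3_intro} produces $w$, and using $\lambda\le 1$ one checks that $w$ is again a witness with strictly smaller log discrepancy, contradicting minimality. The adjoint statement is proved by the same descent with $\lambda=1$ and the exceptionality of $E$. So the mechanism is iterative replacement of valuations rather than a one-shot comparison on a fixed resolution, and the needed input (Theorem~\ref{thm3_intro}) is of a different nature than anything contained in the proof of Theorem~\ref{thmB_intro}.
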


We give two approaches to Theorem~\ref{thmA_intro}: one that proves
Theorem~\ref{thmB_intro} and one that proves Theorem~\ref{corC_intro}.
The first approach relies on showing that
given $f$, a point $P$ in the zero-locus of $f$, and $h\in J_f^2$,
if $g=f+ch$, where $c\in {\mathbf C}$ avoids at most $\dim(X)$ values (in fact, $c$ can be arbitrary if ${\rm mult}_P(f)\geq 3$),
then there is an automorphism of the completion of $\cO_{X,P}$ that maps $f$ to $g$.
We also show that if
${\mathfrak a}=(f_1,\ldots,f_r)$ is an ideal and $g=\sum_{i=1}^r\lambda_if_i$, with $\lambda_1,\ldots,\lambda_r\in {\mathbf C}$ general, then
$\widetilde{\alpha}(g)\geq \lct({\mathfrak a})$. By combining these two results, we easily deduce the inequality in Theorem~\ref{thmB_intro},
which in turn implies the first assertion in Theorem~\ref{thmA_intro}.

The second approach to Theorem~\ref{thmA_intro}  uses arc spaces and also gives an assertion of independent interest
 concerning divisorial valuations.  
 Recall that a \emph{divisorial valuation} is a valuation of the form $v=q\cdot {\rm ord}_E$,
where $E$ is a divisor on a normal variety that has a birational morphism to $X$ and $q$ is a positive integer; we denote by $A_X(v)$
the log discrepancy of $v$ and by $c_X(v)$ the center of $v$ on $X$ (for definitions, see Section~\ref{first_proof}).

\begin{thm}\label{thm3_intro}
Let $X$ be a smooth algebraic variety over an algebraically closed field
of arbitrary characteristic\footnote{We note that this theorem and the results in Section~\ref{formal_equivalence}
do not require that the ground field has characteristic $0$. All other results in this paper need this assumption, due to the fact 
that the basic results on invariants of singularities make use of the existence of resolution of singularities or of vanishing theorems.}, and $f\in\cO_X(X)$ a nonzero function.
If $v$ is a divisorial valuation on $X$ such that
$$0<v(J_f)<\tfrac{1}{2}v(f),$$
then there is a divisorial valuation $w$ on $X$ that satisfies the following conditions:
\begin{enumerate}
\item[i)] $w(g)\leq v(g)$ for every $g\in\cO_X(X)$,
\item[ii)] $w(f)\geq v(f)-1$,
\item[iii)] $A_X(w)\leq A_X(v)-1$, and
\item[iv)] $c_X(w)=c_X(v)$.
\end{enumerate}
\end{thm}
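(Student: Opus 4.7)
The plan is to exploit the correspondence between divisorial valuations on a smooth variety and irreducible cylinders in the arc space $X_\infty$, due to Ein-Lazarsfeld-\Mustata. To the given $v$ we associate its maximal divisorial set $C_v \subset X_\infty$, an irreducible cylinder of codimension $A_X(v)$ such that every $\gamma \in C_v$ satisfies $\ord_t(g \circ \gamma) \geq v(g)$ for all $g \in \cO_X(X)$, with equality for $\gamma$ generic. Write $n := v(f)$ and $m := v(J_f)$, and reduce if necessary to the case $v = \ord_E$ by rescaling; the hypothesis then forces $m \geq 1$ and $n \geq 2m + 1$, so $n - 1 - m \geq m \geq 1$.

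The idea is to enlarge $C_v$ by a $1$-parameter family of perturbations in the coordinate direction where $J_f$ attains its minimum valuation. Pick local coordinates $x_1, \ldots, x_d$ on $X$ near the generic point of $c_X(v)$ and an index $i$ with $v(\partial_{x_i} f) = m$, and consider the morphism
\[
\Phi : C_v \times \A^1 \longrightarrow X_\infty, \qquad (\gamma, s) \longmapsto \gamma + s\,t^{n-1-m}\,\partial_{x_i},
\]
which in local coordinates replaces $x_i \circ \gamma$ by $x_i \circ \gamma + s\,t^{n-1-m}$ and leaves the other coordinates unchanged. A Taylor expansion
\[
f \circ \Phi(\gamma, s) = f \circ \gamma + s\,t^{n-1-m}\,(\partial_i f) \circ \gamma + \sum_{k \geq 2} \frac{s^k\,t^{k(n-1-m)}}{k!}\,(\partial_i^k f) \circ \gamma
\]
shows that each summand has $t$-order at least $n-1$: the first has order $n$, the second has order $(n-1-m)+m=n-1$, and those with $k \geq 2$ have order $\geq 2(n-1-m) \geq n-1$, using $2m \leq n-1$. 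For general $(\gamma, s)$ with $s \neq 0$ the $t^{n-1}$-coefficient is a nonzero polynomial in $s$, so $\ord_t(f \circ \Phi(\gamma, s)) = n - 1 < n$.

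Let $C$ be the Zariski closure of $\Phi(C_v \times \A^1)$, made rigorous by truncating to $X_e$ for some $e \geq n-1$ with $C_v = \pi_e^{-1}(\pi_e(C_v))$. Then $C$ is irreducible, and by the previous paragraph $\pi_e(\Phi(\gamma, s)) \notin \pi_e(C_v)$ for general $(\gamma, s)$ (any lift to $X_\infty$ would lie in $C_v$ yet have nonzero $t^{n-1}$-coefficient of $f$, contradicting $\ord_t(f \circ \cdot) \geq n$ on $C_v$). Thus $\pi_e(C) \supsetneq \pi_e(C_v)$, and since the source $\pi_e(C_v) \times \A^1$ has dimension $\dim \pi_e(C_v) + 1$, we conclude $\dim \pi_e(C) = \dim \pi_e(C_v) + 1$, i.e.\ $\codim_{X_\infty}(C) = A_X(v) - 1$. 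By the Ein-Lazarsfeld-\Mustata correspondence for smooth $X$, the irreducible cylinder $C$ determines a divisorial valuation $w$ via $w(g) = \ord_t(g \circ \gamma')$ for $\gamma'$ general in $C$, with $A_X(w) \leq \codim(C) = A_X(v) - 1$, yielding (iii).

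Property (i) follows from $g \circ \Phi(\gamma, s) = g \circ \gamma + O(t^{n-1-m})$, whose $t$-order is at most $v(g)$; (ii) is precisely the Taylor estimate above; and (iv) holds because $\Phi(\gamma, s)(0) = \gamma(0)$ (the perturbation vanishes at $t = 0$ since $n-1-m \geq 1$). The main obstacle is the cylinder-theoretic bookkeeping at the truncated level $X_e$: verifying that $\pi_e(C)$ is irreducible of the expected dimension $\dim \pi_e(C_v) + 1$, and then applying the Ein-Lazarsfeld-\Mustata correspondence to extract the divisorial valuation $w$ with the required log discrepancy bound.
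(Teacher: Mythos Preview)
Your argument is correct and follows the same overall strategy as the paper: use the arc-space description of divisorial valuations, enlarge the cylinder $C_v$ inside $\Cont^{\geq v(f)-1}(f)\cap\psi_0^{-1}(c_X(v))$ via a Taylor-expansion estimate, and read off $w$ from the resulting irreducible cylinder. The difference is in how the enlargement is produced. The paper sets $C'=\Cont^{\geq m-1}(f)\cap\Cont^{e}(J_f)\cap\psi_0^{-1}(Z)$ (with $m=v(f)$, $e=v(J_f)$), shows by a fiber-by-fiber Taylor computation over the truncation $\psi_{m-e-2}$ that $C_0:=C_v\cap\Cont^e(J_f)$ is \emph{not} an irreducible component of $C'$, and takes $w=\ord_{\overline W}$ for a component $W\supsetneq C_0$. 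You instead construct the enlargement explicitly as the closure of a one-parameter family $\Phi(\gamma,s)=\gamma+s\,t^{n-1-m}\partial_{x_i}$, which is a bit more direct and makes the dimension jump transparent; the paper's version has the advantage that it perturbs in all coordinate directions at once and packages the argument as an irreducible-component statement.

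Two small points. First, the theorem is stated over an algebraically closed field of arbitrary characteristic, so the factor $1/k!$ in your Taylor expansion is not legitimate as written; you should replace $\tfrac{1}{k!}\partial_i^k$ by the Hasse (divided-power) derivative $D_{x_i}^{(k)}$, as the paper does in Lemma~\ref{Taylor_formula}. The order estimate $\ord_t\big((D_{x_i}^{(k)}f)\circ\gamma\big)\geq 0$ is all you need, so the rest of the argument is unaffected. Second, the phrase ``reduce if necessary to the case $v=\ord_E$ by rescaling'' is unnecessary and potentially misleading: if $v=q\cdot\ord_E$ with $q>1$, replacing $v$ by $\ord_E$ would weaken condition (ii) to $w(f)\geq v(f)/q-1$. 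Your construction should simply be carried out for $v$ itself and its cylinder $C_v$; the integrality $n\geq 2m+1$ already follows from $0<m<n/2$ without any rescaling.
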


In characteristic $0$,
this result then implies the assertions in Theorem~\ref{corC_intro}. It also gives
a partial generalization of Theorem~\ref{thmA_intro} to arbitrary coherent nonzero ideals. For such an ideal $\fra$,
we denote by $D(\fra)$ the ideal that on affine open subsets is the sum of the Jacobian ideals of the sections of $\fra$ (note that if $\fra=(f)$, then
$D(\fra)=(f)+J_f$). With this notation, we have

\begin{cor}\label{corD_intro}
For every smooth complex algebraic variety $X$,
if $\fra$ is a coherent nonzero ideal on $X$ with $\lct(\fra)<1$, then $\lct\big(\fra+D(\fra)^2\big)=\lct(\fra)$.
\end{cor}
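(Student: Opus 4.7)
The plan is to reduce the corollary to the principal case of Theorem~\ref{thmA_intro} via a generic element. The easy direction $\lct(\fra)\leq\lct\bigl(\fra+D(\fra)^2\bigr)$ follows from the inclusion $\fra\subseteq\fra+D(\fra)^2$ and monotonicity of the log canonical threshold. For the reverse inequality I would fix a divisorial valuation $v$ on $X$ computing $\lct(\fra)=A_X(v)/v(\fra)$ (it exists, e.g., from a log resolution of $\fra$), and reduce to showing $v\bigl(\fra+D(\fra)^2\bigr)=v(\fra)$; this would give $\lct\bigl(\fra+D(\fra)^2\bigr)\leq A_X(v)/v(\fra)=\lct(\fra)$ and close the argument.

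Writing $\fra=(f_1,\ldots,f_r)$, so that $D(\fra)=\fra+\sum_i J_{f_i}$, I would pass to a generic combination $g=\sum_i c_i f_i$ with $c_i\in\mathbf{C}$. For sufficiently generic $c_i$ (depending on $v$) three facts hold simultaneously: (i) $v(g)=v(\fra)$; (ii) $v(J_g)=v\bigl(\sum_i J_{f_i}\bigr)$, because $\partial g/\partial x_j$ is a generic $\mathbf{C}$-linear combination of the $\partial f_i/\partial x_j$; and (iii) $\lct(g)=\min(\lct(\fra),1)=\lct(\fra)$, by the generic element lemma for the log canonical threshold together with the hypothesis $\lct(\fra)<1$. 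Combining (i) and (iii), the same $v$ also computes $\lct(g)$.

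The main technical point, which I expect to be the hard part, is to verify that this specific $v$ satisfies $2v(J_g)\geq v(g)$. Suppose for contradiction $2v(J_g)<v(g)$. First, $v(J_g)>0$: for every $p\in c_X(v)$ the local log canonical threshold $\lct_p(g)$ must equal the global value, which is $<1$, but at a smooth point of $V(g)$ (where $J_g$ is locally the unit ideal) one can take $g$ as a coordinate and obtain $\lct_{\mathrm{loc}}(g)=1$; this forces $c_X(v)\subseteq V(J_g)$. Now $0<v(J_g)<\tfrac{1}{2}v(g)$, so Theorem~\ref{thm3_intro} applied to $g$ yields a divisorial $w$ with $w(g)\geq v(g)-1$ and $A_X(w)\leq A_X(v)-1$. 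Since $\lct(g)<1$ forces $A_X(v)<v(g)$ (hence $v(g)\geq 2$), a direct computation gives $A_X(w)/w(g)\leq (A_X(v)-1)/(v(g)-1)<A_X(v)/v(g)=\lct(g)$, contradicting that $\lct(g)$ is the infimum of $A_X(\cdot)/(\cdot)(g)$ over divisorial valuations. With $2v(J_g)\geq v(g)$ established, property (ii) gives $v(D(\fra))=\min(v(\fra),v(J_g))$, and in either alternative $2v(D(\fra))\geq v(\fra)$. Hence $v\bigl(\fra+D(\fra)^2\bigr)=v(\fra)$, completing the proof.
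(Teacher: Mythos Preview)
Your argument is correct and follows essentially the same route as the paper's proof: both fix a divisorial valuation $v$ computing $\lct(\fra)$, pass to a general linear combination $g$ of generators (so that $\lct(g)=\lct(\fra)$ and $v$ also computes $\lct(g)$), invoke Theorem~\ref{thm3_intro} to rule out $0<v(J_g)<\tfrac12 v(g)$ via the strict inequality $(A_X(v)-1)/(v(g)-1)<A_X(v)/v(g)$ forced by $\lct(\fra)<1$, and then translate $2v(J_g)\geq v(g)$ into $v\bigl(D(\fra)^2\bigr)\geq v(\fra)$. The only cosmetic differences are that you fix $v$ first and then choose $g$ generic relative to $v$ (making claim (ii) explicit), whereas the paper phrases the key step as a statement about \emph{every} general $f$; and your opening sentence points to Theorem~\ref{thmA_intro} while the actual tool used---in both proofs---is Theorem~\ref{thm3_intro}.
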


An amusing consequence of Theorem~\ref{thmB_intro} is the following inequality
involving the minimal exponent and the Milnor number of an isolated singularity:

\begin{cor}\label{cor_thmB_intro}
If $X$ is a smooth $n$-dimensional complex algebraic variety, $f\in\cO_X(X)$ is nonzero, and $P$ is an isolated singular point of the zero-locus of $f$,
with Milnor number $\mu_P(f)$,
then
$$\widetilde{\alpha}_P(f)^n\cdot\mu_P(f)\geq\frac{n^n}{2^n}.$$
\end{cor}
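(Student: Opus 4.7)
The plan is to combine Theorem~\ref{thmB_intro} with the inequality of de Fernex--Ein--Musta\c{t}\u{a} relating the log canonical threshold of an $\fm$-primary ideal to its Hilbert--Samuel multiplicity. I would work locally in $\cO_{X,P}$, which is regular of dimension $n$. Since $P$ is an isolated singular point of the zero-locus of $f$, the Jacobian ideal $J_f$ is $\fm_P$-primary. Because $J_f$ is generated by the $n$ partial derivatives $\partial_1 f,\ldots,\partial_n f$, it is a parameter ideal (a complete intersection at $P$), so its Hilbert--Samuel multiplicity agrees with its colength:
$$ e(J_f;\cO_{X,P}) \;=\; \dim_{\C}\cO_{X,P}/J_f \;=\; \mu_P(f). $$

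Next I would invoke the bound $\lct(\fra)^n\cdot e(\fra)\geq n^n$, valid for every $\fm_P$-primary ideal $\fra$ in the smooth $n$-dimensional variety $X$, applied to $\fra=J_f$ at $P$. This yields
$$ \lct_P(J_f) \;\geq\; \frac{n}{\mu_P(f)^{1/n}}. $$
Combining with the scaling identity $\lct(\fra^2)=\tfrac{1}{2}\lct(\fra)$ and the monotonicity $\lct_P(f,J_f^2)\geq\lct_P(J_f^2)$, which follows from the containment $(f)+J_f^2\supseteq J_f^2$, we obtain
$$ \lct_P(f,J_f^2) \;\geq\; \tfrac{1}{2}\lct_P(J_f) \;\geq\; \frac{n}{2\,\mu_P(f)^{1/n}}. $$

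Finally, the local form of Theorem~\ref{thmB_intro} (obtained by shrinking $X$ to a neighborhood of $P$ in which $P$ is the only singular point of the zero-locus of $f$) gives $\widetilde{\alpha}_P(f)\geq\lct_P(f,J_f^2)\geq n/(2\,\mu_P(f)^{1/n})$, and raising to the $n$-th power produces the claimed inequality. The only input needed beyond Theorem~\ref{thmB_intro} is the de Fernex--Ein--Musta\c{t}\u{a} lct--multiplicity bound, so I do not anticipate any substantive obstacle; the step that really makes the argument work cleanly is the classical identification $e(J_f)=\mu_P(f)$ coming from the fact that $J_f$ is a parameter ideal at an isolated critical point.
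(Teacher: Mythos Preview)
Your proposal is correct and follows essentially the same argument as the paper: both apply the de Fernex--Ein--Musta\c{t}\u{a} inequality $\lct_P(J_f)^n\cdot e(J_f;\cO_{X,P})\geq n^n$, identify $e(J_f;\cO_{X,P})=\mu_P(f)$ via the fact that $J_f$ is a parameter ideal at $P$, and then chain Theorem~\ref{thmB_intro} with $\lct_P(f,J_f^2)\geq\lct_P(J_f^2)=\tfrac{1}{2}\lct_P(J_f)$.
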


Recall that if $f$ has an isolated singularity at $P$, then the Milnor number
$\mu_P(f)$ is equal to ${\rm dim}_{{\mathbf C}}(\cO_{X,P}/J_f)$. Some examples showing
that the inequality in Corollary~\ref{cor_thmB_intro} is sharp are given in Remark~\ref{rmk_cor_thmB_intro}.

There is another invariant that behaves like $\widetilde{\alpha}_f$ and $\lct(f,J_f^2)$, namely the \emph{motivic oscillation index}
$\moi(f)$ studied in \cite{CMN}. This is defined for polynomials $f\in \overline{\Q}[x_1,\ldots,x_n]$ and
it was shown in
\cite[Proposition~3.10]{CMN} that $\lct(f)=\min\{\moi(f),1\}$ and $\moi(f)>1$ if and only if the hypersurface defined by $f$ in $\A_{\overline{\Q}}^n$
has rational singularities.

In fact, a more refined version $\moi_Z(f)$ of the motivic oscilation index also involves a closed subscheme $Z$ of $\A^n_{\overline{\Q}}$
(the one we referred to in the previous paragraph corresponds to the case when $Z$ is the hypersurface defined by $f$).
We recall the precise definition of $\moi_Z(f)$ in Section~\ref{moi}. We only mention now that if $f\in\Z[x_1,\ldots,x_n]$ and
$Z=\A^n_{\overline{\Q}}$, then $\moi_Z(f)$ relates to finite exponential sums over integers modulo $p^m$ (for primes $p$ and integers $m>0$)
of the form
$$
E(p^m) := \frac{1}{p^{mn}}\sum_{x\in(\Z/p^m\Z)^n} \exp \left(2\pi i \frac{f(x)}{p^m}\right)
$$
and to certain limit values of all possible $\sigma\geq 0$ such that
$$
|E(p^m)|  \ll p^{-m\sigma },
$$
with an implicit constant independent from $m$
(these limits are taken carefully, using in fact finite field extensions and large primes $p$, as will be described in Section~\ref{moi}).
We have the following inequality between $\moi(f)$ and $\lct(f,J_f^2)$:

\begin{thm}\label{thm_moi1}
For every nonconstant $f\in{\overline{\Q}}[x_1,\ldots,x_n]$, we have
\begin{equation}\label{eq_thm_moi1}
\moi(f)\geq\lct(f,J_f^2).
\end{equation}
Moreover, if $\lct(f,J_f^2)\leq 1$, then we have equality in (\ref{eq_thm_moi1}).
\end{thm}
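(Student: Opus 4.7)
I would imitate the proof of Theorem~\ref{thmB_intro}, replacing the minimal exponent $\widetilde{\alpha}$ by the motivic oscillation index $\moi$. The strategy rests on two ingredients. First, a ``general element'' bound: for a nonzero ideal $\fra=(f_0,\ldots,f_r)$ in $\overline{\Q}[x_1,\ldots,x_n]$ and a generic $\overline{\Q}$-linear combination $g=\sum_i \lambda_i f_i$, one has $\moi(g)\geq \lct(\fra)$. Second, the invariance of $\moi$ under the formal coordinate change supplied by the formal-equivalence result recalled just after Theorem~\ref{thmB_intro}: for each $h\in J_f^2$ and each $c\in \overline{\Q}$ outside a finite set, there is a formal automorphism at each point of the zero-locus of $f$ that sends $f$ to $f+ch$, and such a formal automorphism should preserve $\moi$.

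Given these, the inequality in (\ref{eq_thm_moi1}) follows exactly as for Theorem~\ref{thmB_intro}. I would choose $h_1,\ldots,h_r\in J_f^2$ so that $(f,J_f^2)=(f,h_1,\ldots,h_r)$ and, for generic $(\lambda_0,\ldots,\lambda_r)\in \overline{\Q}^{r+1}$ with $\lambda_0\neq 0$, put
\[
g=\lambda_0 f + \sum_{i=1}^r \lambda_i h_i.
\]
The first ingredient then gives $\moi(g)\geq \lct(f,J_f^2)$, while the second, applied after rescaling by $\lambda_0^{-1}$ (which leaves $\moi$ unchanged), gives $\moi(g)=\moi(f)$; combining the two yields the desired inequality.

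To set up the first ingredient, I would extend Proposition~3.10 of \cite{CMN} (the case $\fra=(f)$) to an arbitrary ideal, the point being that on a dense open subset $|g|$ is bounded below by a constant multiple of $\max_i |f_i|$, so the finite exponential sums defining $\moi(g)$ inherit the decay rate dictated by the integrability exponent of $|\fra|^{-2\sigma}$, which is $\lct(\fra)$. The main obstacle I expect is the second ingredient: one must show that the decay exponent of the exponential sums $E(p^m)$ depends only on the formal-isomorphism type of $f$ at each singular point, not on an algebraic realization. This amounts to a local-global decomposition for $\moi$ in terms of Igusa-style local $p$-adic integrals, together with a verification that these local integrals are invariant under formal (and not merely algebraic) coordinate changes; this is where I would expect to invoke or extend the motivic-arithmetic machinery of \cite{CMN}.

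Finally, the equality when $\lct(f,J_f^2)\leq 1$ is a short deduction from statements already recalled. By Theorem~\ref{thmA_intro}(ii), the hypersurface defined by $f$ does not have rational singularities, so $\moi(f)\leq 1$ by the characterization quoted from \cite{CMN}; then $\moi(f)=\min\{\moi(f),1\}=\lct(f)=\lct(f,J_f^2)$, the last equality being Theorem~\ref{thmA_intro}(i). Together with the inequality already proved, this gives equality in (\ref{eq_thm_moi1}).
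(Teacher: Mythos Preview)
Your approach is genuinely different from the paper's, and it has a real gap at the second ingredient.

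The paper does \emph{not} mimic the proof of Theorem~\ref{thmB_intro}. Instead it proves the inequality (\ref{eq_thm_moi1}) directly from the definition of $\moi$, via an elementary manipulation of the $p$-adic integral $E^Z_{f,L,\psi}$ (Proposition~\ref{prop:E^ZfJf}). Using Taylor expansion and orthogonality of characters, one shows that the integrand $\psi(f(x))$ already cancels on the region where $\ord\big(J_f^2(x)\big)<m-1$, so that
\[
E^Z_{f,L,\psi}=\int_{\{\ord(f(x),J_f^2(x))\geq m-1\}}\psi\big(f(x)\big)\,|dx|,
\]
whence $|E^Z_{f,L,\psi}|$ is bounded by the Haar volume of the contact locus $\{\ord(f,J_f^2)\geq m-1\}$. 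The known asymptotics of such volumes in terms of $\lct(f,J_f^2)$ give the bound immediately. No general-element statement and no formal-equivalence statement for $\moi$ are needed.

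Your plan, by contrast, requires showing that $\moi$ is invariant under the formal automorphisms of Propositions~\ref{prop1_formal_equiv} and~\ref{prop2_formal_equiv}, and this is where it breaks. Those automorphisms live in $\overline{\Q}\llbracket x_1,\ldots,x_n\rrbracket$ at a single closed point $P$; they are not algebraic, nor $p$-adic analytic on $\cO_L^n$, so there is no evident change of variables carrying the integral for $f$ to that for $f+ch$. To salvage this you would need (i) a local decomposition $\moi(f)=\min_P\moi_P(f)$, and (ii) that each $\moi_P$ depends only on the formal isomorphism class of $f$ at $P$. Neither is available in the literature you cite, and (ii) in particular is far from obvious: the quantity $\moi$ is extracted from $p$-adic data over infinitely many completions, and a formal power series automorphism over $\overline{\Q}$ does not act on any of them. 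Your first ingredient is also shakier than stated: a lower bound for $|g|$ in terms of $\max_i|f_i|$ controls Igusa zeta integrals $\int|g|^s$, not the oscillatory integrals $\int\psi(g)$ that define $\moi$; passing between the two already requires Igusa's comparison, which is essentially the content you are trying to prove.

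Your argument for the equality case when $\lct(f,J_f^2)\leq 1$ is correct and matches the paper's: Theorem~\ref{thmA_intro} forces the hypersurface to lack rational singularities, whence \cite[Proposition~3.10]{CMN} gives $\moi(f)=\lct(f)=\lct(f,J_f^2)$.
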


Note that by Theorem~\ref{thmA_intro}, we have $\lct(f,J_f^2)\leq 1$ if and only if the hypersurface defined by $f$
does not have rational singularities. In fact, one can give a third proof of part i) of Theorem ~\ref{thmA_intro} by combining
Theorem~\ref{thm_moi1} and the mentioned assertions in \cite[Proposition~3.10]{CMN}; we leave the details to the reader.

We will prove a more general version of the above theorem, allowing for a subset
 $Z$ of the zero-locus of $f$ (see Theorem~\ref{thm_moi2} below).
A related intriguing question is whether we always have  $\widetilde{\alpha}_f=\moi(f)$. However, investigating this
seems to require new ideas, related to Igusa's strong monodromy conjecture. In Corollary \ref{cor:monodromy}, we prove a part of Igusa's strong monodromy conjecture, for poles with real part larger than $-\lct(f,J_f^2)$.

We end the paper by discussing a notion motivated by our results, that of \emph{lct-maximal} ideals. These are proper nonzero ideals $\fra$ such that for every
$\frb$ with $\fra\subsetneq\frb$, we have $\lct(\fra)<\lct(\frb)$. For example, if $H$ is a hypersurface in $X$ such that the pair $(X,H)$ is log canonical, then
it follows from Theorem~\ref{thmA_intro} that
the ideal
$\cO_X(-H)$ is lct-maximal if and only if $H$ has rational singularities.
 In general, we characterize when an ideal $\fra$ is lct-maximal in terms
of the divisorial valuations that compute $\lct(\fra)$ (see Proposition~\ref{prop_char1}). Using this, we obtain the following result:

\begin{thm}\label{thm_char2}
Let $\fra$ be a proper nonzero coherent ideal of $\cO_X$, where $X$ is a smooth complex algebraic variety.
\begin{enumerate}
\item[i)] If $\fra$ is lct-maximal (and not necessarily radical), then its zero-locus, with reduced structure, has rational singularities.
\item[ii)] If $\fra$ is a radical ideal, defining an irreducible, locally complete intersection subscheme $W$, with rational singularities,
then $\fra$ is lct-maximal.
\end{enumerate}
\end{thm}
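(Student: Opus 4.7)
The approach rests on the valuative characterization of lct-maximality given in Proposition~\ref{prop_char1}: $\fra$ is lct-maximal if and only if for every divisorial valuation $v$ computing $\lct(\fra)$, the valuation ideal $\fra_v := \{g \in \cO_X : v(g) \ge v(\fra)\}$ equals $\fra$.

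For part (ii), suppose $\fra = \cI_W$ with $W$ irreducible, locally complete intersection of codimension $c$, with rational singularities. I would first observe that $\lct(\fra) = c$: this follows from the equivalence of rational and canonical singularities for LCI subschemes, together with the standard fact that $\lct(\cI_W) = \codim W$ precisely when $W$ is canonical. Next I would build a log resolution $\pi : Y \to X$ of $(X, \fra)$ factoring through $\Bl_W X$, and isolate the unique prime exceptional divisor $E$ dominating $W$: on such $E$ one has $\ord_E(\cI_W) = 1$ and discrepancy equal to $c-1$, so $v := \ord_E$ computes $\lct(\fra)$, and $\fra_v = \pi_* \cO_Y(-E) \cap \cO_X = \cI_W = \fra$. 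The remaining step is to show that every other divisorial valuation $v'$ computing $\lct(\fra)$ also satisfies $\fra_{v'} \subseteq \cI_W$; the main input is the adjoint-ideal characterization $\adj(W) = \cI_W$ of rational LCI singularities, which forces $c_X(v') = W$ and $\fra_{v'} \subseteq \cI_W = \fra$ for any such $v'$. Proposition~\ref{prop_char1} then finishes part (ii).

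For part (i), assume $\fra$ is lct-maximal and argue by contrapositive: if $Z := V(\fra)_{\red}$ does not have rational singularities, produce $\frb \supsetneq \fra$ with $\lct(\frb) = \lct(\fra)$. I would split into two cases. If $\lct(\fra) < 1$, Corollary~\ref{corD_intro} yields $\lct(\fra + D(\fra)^2) = \lct(\fra)$, so it suffices to verify $D(\fra)^2 \not\subseteq \fra$; this can be arranged using that $Z$ has a generic non-rational singular point, at which some product of first-order partials of elements of $\fra$ fails to lie in $\fra$. If $\lct(\fra) \ge 1$, reduce to the hypersurface case by choosing a suitable $f \in \fra$ (for instance, a generic element of a high enough power) so that $V(f) \supseteq Z$ inherits the failure of rationality from $Z$; then Theorem~\ref{thmA_intro} gives $\lct(f, J_f^2) = \lct(f)$, and the strict containment $(f, J_f^2) \supsetneq (f)$ (which holds for any reduced nonunit $f$, since $J_f$ is the unit ideal at smooth points of $V(f)$) upgrades to a strict enlargement $\frb = \fra + J_f^2 \supsetneq \fra$ with the same log canonical threshold.

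The main obstacle is the reduction to a hypersurface in part (i) when $\fra$ is non-principal: the element $f \in \fra$ must be chosen so that $V(f)$ genuinely inherits the non-rationality of $Z$ and so that $J_f^2$ provides an enlargement of $\fra$ which is both strict and preserves the lct. I expect this to require a Bertini-type argument on multiplier and adjoint ideals, combined with the divisorial-valuation description of lct-maximality from Proposition~\ref{prop_char1}, so as to track how valuations computing $\lct(\fra)$ correspond to valuations computing $\lct(f)$ for the chosen $f$.
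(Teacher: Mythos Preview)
Your approach to part~ii) is along the right lines and close to the paper's: both rely on Proposition~\ref{prop_char1} and on identifying the divisorial valuation(s) computing $\lct(\fra)$ via the exceptional divisor of $\Bl_W X$. The paper handles this more directly by citing \cite[Theorem~2.1]{Mustata}, which shows not only that $\lct(\cI_W)=\codim_X W$ but also that the computing valuation is \emph{unique}; Proposition~\ref{prop_char1} then finishes. Your adjoint-ideal argument for ruling out other computing valuations is sketchy as stated (in particular, ``$\adj(W)=\cI_W$'' is not the standard characterization of rational singularities---it is $\adj(W)=\cO_X$), but the uniqueness you need is exactly the content of the cited result, so this is repairable.

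Your strategy for part~i), however, has a genuine gap. The contrapositive you propose does not carry enough information. In the case $\lct(\fra)<1$, Corollary~\ref{corD_intro} indeed gives $\lct\big(\fra+D(\fra)^2\big)=\lct(\fra)$, and if $\fra$ is lct-maximal this forces $D(\fra)^2\subseteq\fra$. But this conclusion holds for \emph{every} lct-maximal ideal with $\lct(\fra)<1$, regardless of whether $Z=V(\fra)_{\red}$ has rational singularities---there is no mechanism to deduce rationality of $Z$ from $D(\fra)^2\subseteq\fra$ alone. (The paper itself remarks, just after introducing lct-maximality, that $\fra\mapsto\fra+D(\fra)^2$ is a closure operation that need not produce an lct-maximal ideal.) Your case $\lct(\fra)\geq 1$ is more problematic still: there is no reason a hypersurface $V(f)\supseteq Z$ should inherit non-rationality from the higher-codimension $Z$, and no link is established between $\lct(f,J_f^2)$ and $\lct(\fra+J_f^2)$.

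The paper's argument for part~i) is entirely different and uses a deeper input. From Proposition~\ref{prop_char1} (cf.\ Remark~\ref{rmk2_prop_char1}), every valuation computing $\lct(\fra)$ has center exactly $W=V(\fra)_{\red}$. One then chooses $k>\lct(\fra)$ general elements $f_1,\ldots,f_k$ of $\fra$ and sets $f=f_1\cdots f_k$, so that $\lct(f)=\tfrac{1}{k}\lct(\fra)$ and the valuations computing $\lct(f)$ coincide with those computing $\lct(\fra)$. This exhibits $W$ as a \emph{minimal log canonical center} of the pair $\big(X,\lct(f)\cdot\mathrm{div}(f)\big)$, and Kawamata's subadjunction theorem \cite{Kawamata}, \cite[Theorem~1.2]{FG} then gives that $W$ has rational singularities. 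The key idea you are missing is this passage through minimal lc centers; the results on $D(\fra)$ and $J_f^2$ from earlier in the paper are not used here.
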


The paper is organized as follows. In Section~\ref{formal_equivalence} we treat the formal equivalence of $f$ and that of $f+h$, for suitable
$h\in J_f^2$. In  Section~\ref{first_proof} we prove Theorem~\ref{thmB_intro} and deduce Corollary~\ref{cor_thmB_intro} and
Theorem~\ref{thmA_intro}. In Section~\ref{second_proof}, after reviewing some basic facts about the connection between valuations and contact loci
in arc spaces, we prove Theorem~\ref{thm3_intro}, deduce Theorem~\ref{corC_intro} and Corollary~\ref{corD_intro}, and obtain a second proof of Theorem~\ref{thmA_intro}.
In Section~\ref{examples}, we give two examples. We show that for generic determinantal hypersurfaces, the inequality in
Theorem~\ref{thmB_intro} is an equality, and we describe when this inequality is strict in the case of homogeneous diagonal hypersurfaces.
In Section~\ref{moi} we recall the definition of the motivic oscillation index and prove the general version of Theorem~\ref{thm_moi1}.
Finally, in Section~\ref{open_questions} we discuss lct-maximal ideals, prove Theorem~\ref{thm_char2}, and raise some open questions.

\subsection{Acknowledgments}
We would like to thank Nero Budur, Mattias Jonsson, Johannes Nicaise, Mihai P\u{a}un, and Uli Walther for useful discussions and to Nero Budur for his comments on an earlier version
of this paper.

\section{Adding terms in the square of the Jacobian ideal}\label{formal_equivalence}

Let $k$ be a field of arbitrary characteristic and $R=k\llbracket x_1,\ldots,x_n\rrbracket$, for a positive integer $n$.
We denote by $\frm$ the maximal ideal of $R$.
We say that two elements $f,g\in R$
are \emph{formally equivalent} (and write $f\sim g$) if there is an automorphism of $R$ that maps $f$ to $g$. Recall that for $f\in R$ nonzero, the \emph{multiplicity}
${\rm mult}(f)$ of $f$
is the largest $q$ such that $f\in\frm^q$. The \emph{Jacobian ideal} $J_f$ of $f$ is the ideal $\left(\frac{\partial f}{\partial x_1},\ldots,\frac{\partial f}{\partial x_n}\right)$.

We first recall the following version of Taylor's formula that's valid in any characteristic. We use the multi-index notation $v^{\alpha}=v_1^{\alpha_1}\cdots v_n^{\alpha_n}$
for $v=(v_1,\ldots,v_n)\in R^n$ and $\alpha=(\alpha_1,\ldots,\alpha_n)\in\Z_{\geq 0}^n$. We also denote by $D^{\alpha}$ the divided power differential operator $\prod_{k=1}^n\tfrac{\partial_{x_k}^{\alpha_k}}{\alpha_k!}$
given by
$$D^{\alpha}\left(\sum_{\beta\in\Z_{\geq 0}^n}c_{\beta}x^{\beta}\right)=\sum_{\beta\in\Z_{\geq 0}}{\beta_1\choose \alpha_1}\cdots {\beta_r\choose \alpha_r}c_{\beta}x^{\beta-\alpha},$$
with the convention that ${\beta_i\choose\alpha_i}=0$ if $\beta_i<\alpha_i$. 

\begin{lem}\label{Taylor_formula}
For every $f\in R$ and every $u=(u_1,\ldots,u_n)$, $v=(v_1,\ldots,v_n)\in\frm^{\oplus n}$, we have
\begin{equation}\label{eq_Taylor_formula}
f(u+v)=\sum_{\alpha\in{\mathbf Z}_{\geq 0}^n}D^{\alpha}f(u)v^{\alpha}.
\end{equation}
\end{lem}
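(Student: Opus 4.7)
The plan is to reduce to the monomial case by a density argument, using continuity of both sides in $f$ with respect to the $\frm$-adic topology on $R$. First I would verify that both sides make sense as elements of $R$: the left-hand side is defined since $u+v\in\frm^{\oplus n}$, and on the right-hand side each summand satisfies $D^{\alpha}f(u)v^{\alpha}\in\frm^{|\alpha|}$, so the series converges $\frm$-adically.

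Both sides are $k$-linear in $f$. For continuity, if $f\in\frm^N$, then the left-hand side lies in $\frm^N$, since substitution into elements of $\frm^{\oplus n}$ is $\frm$-adically continuous. On the right-hand side, the definition of $D^{\alpha}$ applied termwise to a power series shows that $D^{\alpha}f\in\frm^{\max(N-|\alpha|,0)}$ whenever $f\in\frm^N$; hence $D^{\alpha}f(u)v^{\alpha}\in\frm^{\max(N-|\alpha|,0)+|\alpha|}\subseteq\frm^N$. By $\frm$-adic density of polynomials in $R$, verifying the formula modulo each $\frm^N$ reduces to the case where $f$ is a polynomial, and then by $k$-linearity to the case where $f$ is a monomial $x^{\beta}$.

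For $f=x^{\beta}$, the left-hand side is $(u+v)^{\beta}=\prod_{i=1}^n(u_i+v_i)^{\beta_i}$, which expands by the ordinary binomial theorem (valid in any commutative ring) to $\sum_{\alpha}\binom{\beta}{\alpha}u^{\beta-\alpha}v^{\alpha}$, where $\binom{\beta}{\alpha}=\prod_i\binom{\beta_i}{\alpha_i}$. The right-hand side, using $D^{\alpha}(x^{\beta})=\binom{\beta}{\alpha}x^{\beta-\alpha}$ directly from the definition, evaluates to the same expression. The only point requiring care is the continuity estimate above, and I do not anticipate any serious obstacle; the virtue of the divided-power operator $D^{\alpha}$ is precisely that the integer coefficients $\binom{\beta}{\alpha}$ appear in place of the factorial denominators of the classical Taylor expansion, which is why the formula remains valid in arbitrary characteristic.
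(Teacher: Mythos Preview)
Your proof is correct and follows essentially the same approach as the paper: both argue by additivity and $\frm$-adic continuity in $f$ to reduce to the monomial case, where the formula is an immediate consequence of the binomial theorem. Your version simply spells out the continuity estimate in more detail than the paper does.
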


\begin{proof}
Since both sides of (\ref{eq_Taylor_formula}) are additive and continuous with respect to the $\frm$-adic topology, it is enough to check the formula
for $f=x_1^{\beta_1}\cdots x_n^{\beta_n}$, when it is an immediate consequence of the binomial formula. 
\end{proof}

\begin{prop}\label{prop1_formal_equiv}
If $f\in R$ satisfies ${\rm mult}(f)=d\geq 3$, then for every $g\in J_f^2$, we have $f\sim f+g$.
\end{prop}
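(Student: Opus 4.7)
The plan is to construct an automorphism $\varphi$ of $R$ of the form $\varphi(x_i) = x_i + u_i$ with $u_i \in \frm$ and $\varphi(f) = f + g$, via an $\frm$-adic Newton iteration. Abbreviate $f_i := \partial f/\partial x_i$. By Lemma~\ref{Taylor_formula}, the equation $\varphi(f) = f + g$ is equivalent to
\[
\sum_i u_i\, f_i + \sum_{|\alpha| \ge 2} D^\alpha f \cdot u^\alpha = g.
\]
Since $g \in J_f^2$, one can write $g = \sum_{i,j} a_{ij}\, f_i f_j$ and take $u^{(1)}_i := \sum_j a_{ij}\, f_j \in J_f$ as a first approximation satisfying $\sum_i u^{(1)}_i\, f_i = g$. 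More generally, the plan is to produce tuples $u^{(0)} = 0, u^{(1)}, u^{(2)}, \dots$ with each $u^{(k)}_i \in J_f$, and integers $0 = c_0 < c_1 < \cdots$, such that $r_k := f(x + u^{(k)}) - f - g \in \frm^{c_k} J_f^2$ and $u^{(k+1)}_i - u^{(k)}_i \in \frm^{c_k} J_f$ for every $i$.

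The key structural ingredient for the inductive step is that the shifted partials $f_i(x + u^{(k)})$ still generate $J_f$. Applying Taylor to each $f_i$, the difference $f_i(x + u^{(k)}) - f_i(x)$ is a sum of terms $D^\alpha f_i \cdot (u^{(k)})^\alpha$ with $|\alpha| \ge 1$, each of which lies in $\frm \cdot J_f$: for $|\alpha| = 1$, the second derivative $D^\alpha f_i$ has multiplicity $\ge d - 2 \ge 1$, while $u^{(k)}_j \in J_f$; for $|\alpha| \ge 2$, $(u^{(k)})^\alpha \in J_f^{|\alpha|} \subseteq J_f^2 \subseteq \frm \cdot J_f$, using $J_f \subseteq \frm^{d-1} \subseteq \frm$ (both estimates requiring $d \ge 3$). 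Nakayama's lemma then shows that $(f_a(x+u^{(k)}))_a$ and $(f_a)_a$ are related by an invertible matrix over $R$. Writing $r_k = \sum_{a,b} \beta_{ab}\, f_a f_b$ with $\beta_{ab} \in \frm^{c_k}$ and converting to the new generators, one solves $\sum_a \delta u_a \cdot f_a(x + u^{(k)}) = -r_k$ with $\delta u_a \in \frm^{c_k} J_f$, and sets $u^{(k+1)} = u^{(k)} + \delta u$.

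The updated error then reduces to $\sum_{|\alpha| \ge 2} D^\alpha f(x + u^{(k)}) \cdot \delta u^\alpha$. The dominant $|\alpha| = 2$ term lies in $\frm^{d-2} \cdot \frm^{2 c_k} \cdot J_f^2$ (multiplicities being preserved by the automorphism $x \mapsto x + u^{(k)}$), and the $|\alpha| \ge 3$ terms lie in even smaller ideals, using $J_f^{|\alpha|} \subseteq \frm^{(d-1)(|\alpha|-2)} J_f^2$. Thus $r_{k+1} \in \frm^{c_{k+1}} J_f^2$ with $c_{k+1} \ge (d - 2) + 2 c_k \ge c_k + 1$; hence $c_k \to \infty$, the increments $\delta u$ tend to zero in the $\frm$-adic topology, and the limit $u := \lim_k u^{(k)}$ gives the desired automorphism $\varphi$ with $\varphi(f) = f + g$. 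I expect this quantitative contraction estimate to be the main hurdle: the hypothesis $d \ge 3$ is essential both for the Nakayama step (so that $f_i(x+u) \equiv f_i \pmod{\frm J_f}$) and for producing the contraction factor $\frm^{d-2}$; when $d = 2$ a second derivative of $f$ can be a unit, and the Newton step then fails to shrink the error.
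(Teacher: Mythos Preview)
Your proof is correct and follows essentially the same Tougeron-style Newton iteration as the paper: both expand via Taylor's formula, use $d\geq 3$ to get $f_i(x+u)\equiv f_i\pmod{\frm J_f}$ (the paper packages this as $J_{f+g}=J_f$), and obtain the same quadratic contraction $c_k\mapsto 2c_k+(d-2)$ (the paper records the weaker $a\mapsto 2a+1$). The only cosmetic difference is that the paper composes a sequence of automorphisms $\varphi_j\circ\cdots\circ\varphi_1$ at each step, whereas you update a single translation vector $u^{(k)}$ and pass to the limit; these are equivalent bookkeeping choices.
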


\begin{proof}
The argument follows closely the proof of Tougeron's finite determinacy theorem \cite[Section X.4]{Tougeron},
especially the version given in  \cite[Section 6.4]{AGZV}.
We divide it in three steps.

\noindent {\bf Step 1}. For every $f$ with ${\rm mult}(f)=d\geq 3$ and every $g\in J_f^2$, we have $J_f=J_{f+g}$.

Note first that $J_f\subseteq\frm^{d-1}\subseteq\frm^2$.
By assumption, we can write $g=\sum_{i,j}h_{i,j}\frac{\partial f}{\partial x_i}\frac{\partial f}{\partial x_j}$, for some $h_{i,j}\in R$.
This implies that for every $\ell$, we have $\frac{\partial g}{\partial x_{\ell}}\in \frm J_f$.
Since
$$\frac{\partial (f+g)}{\partial x_{\ell}}-\frac{\partial f}{\partial x_{\ell}}=\frac{\partial g}{\partial x_{\ell}} \in \frm J_f\quad\text{for every}\quad \ell,$$
it follows that $J_{f+g}\subseteq J_f$ and $J_f\subseteq J_{f+g}+\frm J_f$. We thus have $J_f=J_{f+g}$
by Nakayama's lemma.

\noindent {\bf Step 2}. Suppose that $a$ is a nonnegative integer and $g\in \frm^aJ_f^2$. Let us write
$$g=\sum_{i=1}^ng_i\frac{\partial f}{\partial x_i},\quad\text{with}\quad g_i\in\frm^aJ_f\,\,\text{for all}\,\,i.$$
In this case, it follows from Lemma~\ref{Taylor_formula}
 that
$$f(x_1+g_1,\ldots,x_n+g_n)-(f+g)$$
is a sum of terms that each lie in some $\frm^{d-i}(\frm^aJ_f)^i$, with $i\geq 2$ (with the convention that $\frm^j=R$ for $j<0$).
Since
$$\frm^{d-i}(\frm^aJ_f)^i\subseteq  \frm^{2a+1}J_f^2\quad\text{for every}\quad i\geq 2,$$
it follows that if we consider the automorphism $\varphi\colon R\to R$ given by $\varphi(x_i)=x_i+g_i$, then
$\varphi(f)-(f+g)\in \frm^{2a+1}J_f^2$. Note that $\varphi(x_i)-x_i=g_i\in \frm^aJ_f$.

\noindent {\bf Step 3}. Suppose now that $f$ and $g$ are as in the statement of the proposition.
We construct a sequence of autormorphisms $(\varphi_j)_{j\geq 1}$ of $R$ that satisfy
\begin{equation}\label{eq1_prop1_formal_equiv}
\varphi_j(x_i)-x_i\in \frm^{2^{j-1}-1}J_f\quad\text{for all}\quad 1\leq i\leq n\quad\text{and}
\end{equation}
\begin{equation}\label{eq2_prop1_formal_equiv}
\varphi_j\circ\ldots\circ\varphi_1(f)-(f+g)\in\frm^{2^j-1}J_f^2.
\end{equation}
We first apply Step 2 with $a=0$ to construct the automorphism $\varphi_1$ that satisfies (\ref{eq1_prop1_formal_equiv}) and
(\ref{eq2_prop1_formal_equiv}).
Suppose now that we have constructed $\varphi_j$ that satisfies (\ref{eq1_prop1_formal_equiv}) and
(\ref{eq2_prop1_formal_equiv}). Note first that
\begin{equation}\label{eq3_prop1_formal_equiv}
J_{\varphi_j\circ\ldots\circ\varphi_1(f)}=J_f.
\end{equation}
Indeed, it follows from Step 1 that $J_{f+g}=J_f$, hence (\ref{eq2_prop1_formal_equiv}) implies
$$J_{\varphi_j\circ\ldots\circ\varphi_1(f)}\subseteq J_f\subseteq J_{\varphi_j\circ\ldots\circ\varphi_1(f)}+\frm J_f.$$
Therefore (\ref{eq3_prop1_formal_equiv}) follows from Nakayama's lemma.
We then apply Step 2 with $a=2^j-1$ to construct $\varphi_{j+1}$ that satisfies (\ref{eq1_prop1_formal_equiv}) and (\ref{eq2_prop1_formal_equiv}).

Note now that (\ref{eq1_prop1_formal_equiv}) implies that for every $h\in R$, the sequence $\big(\varphi_j\circ\ldots\circ\varphi_1(h)\big)_{j\geq 1}$
converges in the $\frm$-adic topology to an element $\psi(h)\in R$. Since each $\varphi_j\circ\ldots\circ\varphi_1$ is a ring homomorphism, by passing to limit
 it follows that so is $\psi$. 
In fact, it is an automorphism (this is due to the fact 
that a ring homomorphism $\psi\colon R\to R$ is an automorphism if and only if $P_i:=\psi(x_i)\in\frm$ for all $i$ and ${\rm det}\left(\tfrac{\partial P_i}{\partial x_j}\right)\not\in\frm$,
and these conditions are preserved after passing to limit in the $\frm$-adic topology).
We then conclude using (\ref{eq2_prop1_formal_equiv}) that $\psi(f)=f+g$. This completes the proof of the proposition.
\end{proof}

We next turn to the case when ${\rm mult}(f)=2$. From now on we assume that ${\rm char}(k)\neq 2$.
Given any $f\in R$ with ${\rm mult}(f)\geq 2$, we write $f_2$ for the sum of the degree 2 terms in $f$ and $f_{\geq 3}=f-f_2$.
Suppose that $f_2\neq 0$ and let $r={\rm rank}(f_2)$. After applying an automorphism of $R$ given by a linear change of variables, we
may assume that $f_2=\sum_{i=1}^ra_ix_i^2$, for some $a_1,\ldots,a_r\in k^*$.
Recall now that if this is the case, then there is an automorphism
$\varphi$ of $R$ that fixes $x_{r+1},\ldots,x_n$, such that
\begin{equation}\label{eq_Morse1}
\varphi(f)=\sum_{i=1}^ra_ix_i^2+h(x_{r+1},\ldots,x_n),
\end{equation}
for some $h\in k\llbracket x_{r+1},\ldots,x_n\rrbracket$ with ${\rm mult}(h)\geq 3$
(this is the content of the Morse lemma, see \cite[page 6]{Milnor} and \cite[Section~6.2]{AGZV} for the precise
differentiable and analytic versions).
We can achieve this as follows. Clearly, arguing by induction on $r$,
it is enough to find $\varphi$ that fixes $x_2,\ldots,x_n$, such that $\varphi(f)_2=f_2$, and
$$
\varphi(f)=a_1x_1^2+h(x_2,\ldots,x_n)
$$
(we will refer to this as \emph{morsification} with respect to $x_1$).
Let us write
\begin{equation}\label{eq_Morse2}
f=x_1^2a(x_1,\ldots,x_n)+x_1b(x_2,\ldots,x_n)+h(x_2,\ldots,x_n),
\end{equation}
so that $a_1:=a(0,\ldots,0)\neq 0$ and ${\rm mult}(b)\geq 2$.
If $\varphi_1$ is the automorphism of $R$ given by
$\varphi_1(x_1)=x_1-\frac{b}{2a_1}$ and $\varphi_1(x_i)=x_i$ for $i\geq 2$, then
$\varphi_1(f)_2=f_2$ (since ${\rm mult}(b)\geq 2$)
and if we write as above
$$\varphi_1(f)=x_1^2a'(x_1,\ldots,x_n)+x_1b'(x_2,\ldots,x_n)+c'(x_2,\ldots,x_n),$$
then ${\rm mult}(b')>{\rm mult}(b)$. We can thus recursively construct a sequence of automorphisms
$(\varphi_i)_{i\geq 1}$ of $R$ and another such automorphism $\psi$ such that for every $w\in R$,
$\varphi_j\circ\ldots\circ\varphi_1(w)$ converges to $\psi(w)$ in the $\frm$-adic topology and after replacing $f$ by $\psi(f)$,
we may assume that in (\ref{eq_Morse2}) we have $b=0$. Since $a_1^{-1}a$ has constant term equal to $1$, it follows that
there is an invertible $p\in R$ such that $p^2=a_1^{-1}a$. If $\varphi$ is the automorphism of $R$ given by $\varphi(x_1)=x_1p$ and
$\varphi(x_i)=x_i$ for $i\geq 2$, it follows that $\varphi(f)_2=f_2$ and $\varphi(f)=a_1x_1^2+h(x_2,\ldots,x_n)$.

\begin{rmk}
Of course, if $k$ is algebraically closed, we conclude from (\ref{eq_Morse1}) that
$$f\sim \sum_{i=1}^rx_i^2+h(x_{r+1},\ldots,x_n).$$
\end{rmk}

\begin{rmk}\label{rmk_Morsification}
Suppose that 
$$f=\sum_{i=1}^ra_ix_i^2+f_{\geq 3}\in R,$$
where $a_i\neq 0$ for $1\leq i\leq r$, and let $\frb\subseteq k\llbracket x_{2},\ldots,x_n\rrbracket$ be an ideal 
such that $f\in \fra=(x_1R+\frb R)^2$. It follows from the description of the Morsification algorithm with respect to $x_1$
that the automorphism $\varphi$ of $R$ that we constructed (that fixes $x_2,\ldots,x_n$ and such that $\varphi(f)_2=f_2$ and 
$\varphi(f)=a_1x_1^2+q_2(x_2,\ldots,x_n)$) has the property that $\varphi(\fra)\subseteq\fra$. Therefore we have $q_2\in\frb^2$. 

Iterating this observation $r$ times, we see that if $\frc\subseteq k\llbracket x_{r+1},\ldots,x_n\rrbracket$ is an ideal such that
$f\in \big((x_1,\ldots,x_r)+\frc R\big)^2$, then by successively doing Morsification with respect to $x_1,\ldots,x_r$, we get an automorphism $\psi$
of $R$ that fixes $x_{r+1},\ldots,x_n$, and such that 
$$\psi(f)=\sum_{i=1}^ra_ix_i^2+q_{r+1}(x_{r+1},\ldots,x_n),\quad\text{with}\quad q_{r+1}\in \frc^2.$$
\end{rmk}

\begin{prop}\label{prop2_formal_equiv}
Suppose that ${\rm char}(k)\neq 2$.
If
$$f= \sum_{i=1}^ra_ix_i^2+h(x_{r+1},\ldots,x_n),$$
with $a_1,\ldots,a_r\in k^*$ and ${\rm mult}(h)\geq 3$, then for every $g\in J_f^2$ such that ${\rm rank}(f_2+g_2)=r$, there are
$c_1,\ldots,c_r\in k^*$ such that
$$f+g\sim\sum_{i=1}^rc_ix_i^2+h(x_{r+1},\ldots,x_n).$$
In particular, if $k$ is algebraically closed, then we have $f+g\sim f$.
\end{prop}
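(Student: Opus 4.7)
The goal is to produce an automorphism of $R$ carrying $f+g$ onto $\sum_{i=1}^rc_ix_i^2+h$, for suitable $c_i\in k^*$. The plan is to proceed in three stages: diagonalize the new quadratic part via a linear change; apply iterated Morsification while tracking $h$ modulo $J_h^2$; and correct $h$ exactly via Proposition~\ref{prop1_formal_equiv} inside $k\llbracket x_{r+1},\ldots,x_n\rrbracket$. To begin, since $h$ depends only on $x_{r+1},\ldots,x_n$ and ${\rm mult}(h)\geq 3$, we have $J_f=(x_1,\ldots,x_r)+J_h$ with $J_h\subseteq\frn^2$ (where $\frn$ is the maximal ideal of $k\llbracket x_{r+1},\ldots,x_n\rrbracket$), whence
\[
J_f^2\subseteq(x_1,\ldots,x_r)^2+(x_1,\ldots,x_r)\frn^2+\frn^4;
\]
thus $g_2$ is a quadratic form in $x_1,\ldots,x_r$ only, and $(f+g)_2$ is a rank-$r$ quadratic form in those variables. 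Since $\mathrm{char}(k)\neq 2$, a linear change $L$ fixing $x_{r+1},\ldots,x_n$ diagonalizes $(f+g)_2$ as $\sum_{i=1}^rc_ix_i^2$ with $c_i\in k^*$. Setting $\fra:=(x_1,\ldots,x_r)+J_hR$, we have $L(\fra)=\fra$ and
\[
L(f+g)=\sum_{i=1}^rc_ix_i^2+h+\tilde g,\qquad \tilde g:=L(g_{\geq 3})\in\fra^2;
\]
passing to the quotient $R\to k\llbracket x_{r+1},\ldots,x_n\rrbracket$ gives $\fra^2\cap k\llbracket x_{r+1},\ldots,x_n\rrbracket=J_h^2$.

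Next, I apply iterated Morsification to $L(f+g)$ with respect to $x_1,\ldots,x_r$, in a slight refinement of Remark~\ref{rmk_Morsification}: if $F=h_0+F_1$ with $h_0\in k\llbracket x_{r+1},\ldots,x_n\rrbracket$, $F_1\in\fra^2$, and $F_2=\sum c_ix_i^2$, then the algorithm produces an automorphism $\psi$ fixing $x_{r+1},\ldots,x_n$ with $\psi(F)=\sum c_ix_i^2+q$ and $q-h_0\in J_h^2$. This is established by following the induction in the proof of Remark~\ref{rmk_Morsification} while observing that every substitution $\varphi$ used there fixes $x_{r+1},\ldots,x_n$ (so $\varphi(h_0)=h_0$), sends $\fra$ to itself, and satisfies $\varphi(\sum c_ix_i^2)\equiv\sum c_ix_i^2\pmod{\fra^2}$---both for the translation $x_j\mapsto x_j-b/(2c_j)$ and for the final rescaling $x_j\mapsto x_jp$. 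The decomposition $\sum c_ix_i^2+h_0+(\text{element of }\fra^2)$ is therefore preserved throughout, and $q-h_0\in\fra^2\cap k\llbracket x_{r+1},\ldots,x_n\rrbracket=J_h^2$.

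Finally, with $q-h\in J_h^2$ and ${\rm mult}(h)\geq 3$, Proposition~\ref{prop1_formal_equiv} applied inside $k\llbracket x_{r+1},\ldots,x_n\rrbracket$ supplies an automorphism $\chi_0$ of that ring with $\chi_0(q)=h$; extending by $\chi(x_i)=x_i$ for $i\leq r$, the composition $\chi\circ\psi\circ L$ sends $f+g$ to $\sum c_ix_i^2+h$. When $k$ is algebraically closed, a further diagonal rescaling $x_i\mapsto\lambda_ix_i$ with $\lambda_i^2c_i=a_i$ gives $f+g\sim f$. The main obstacle, as I see it, is the second step: one must verify that the multiplicative rescaling $x_j\mapsto x_jp$ (with $p$ a unit depending on the intermediate state) leaves $h_0$ untouched and only adds corrections in $\fra^2$---this rests on the facts that $h_0$ is built from variables fixed by $\varphi$ and that $\varphi(\fra)=\fra$.
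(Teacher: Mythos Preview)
Your proof is correct and follows essentially the same three-stage approach as the paper: a linear change diagonalizing the quadratic part, Morsification to separate the first $r$ variables, and then Proposition~\ref{prop1_formal_equiv} inside $k\llbracket x_{r+1},\ldots,x_n\rrbracket$. The only difference is bookkeeping in the second step: the paper observes that $\varphi(f+g)-h\in\fra^2$ and applies Remark~\ref{rmk_Morsification} directly to this difference (then adds back $\psi(h)=h$), whereas you carry $h$ along through the Morsification of $L(f+g)$ and argue that the decomposition $h+(\text{element of }\fra^2)$ is preserved. Since $h$ involves none of $x_1,\ldots,x_r$, the Morsification algorithm for $L(f+g)$ and for $L(f+g)-h$ produce \emph{identical} automorphisms, so the two presentations are equivalent; the paper's is slightly cleaner because it invokes Remark~\ref{rmk_Morsification} verbatim rather than needing your refinement.
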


\begin{proof}
Note that we have $J_f=(x_1,\ldots,x_r)+J_hR$, where $J_h$ is the Jacobian ideal of $h$ in $k\llbracket x_{r+1},\ldots,x_n\rrbracket$.
Since $g\in J_f^2$, we have $f_2+g_2\in k\llbracket x_1,\ldots,x_r\rrbracket$. 
By hypothesis, we have  ${\rm rank}(f_2+g_2)=r$, so that there is an automorphism $\varphi$ of $R$ given by a linear change of coordinates in $x_1,\ldots,x_r$ such that
$$\varphi(f+g)_2=c_1x_1^2+\ldots+c_rx_r^2,$$
for some $c_1,\ldots,c_r\in k^*$. 
Since
$$\varphi(f+g)-h\in \big((x_1,\ldots,x_r)+J_hR\big)^2,$$
applying Remark~\ref{rmk_Morsification} for $\varphi(f+g)-h$, we see that there is an automorphism $\psi$ of $R$
that fixes $x_{r+1},\ldots,x_n$ (hence $\psi(h)=h$) and such that 
$$\psi\big(\varphi(f+g)\big)=\sum_{i=1}^rc_ix_i^2+q+h,$$
for some $q\in J_h^2\subseteq k\llbracket x_{r+1},\ldots,x_n\rrbracket$. Since $q+h\sim h$ in $k\llbracket x_{r+1},\ldots,x_n\rrbracket$
by Proposition~\ref{prop1_formal_equiv}, we conclude that $f+g\sim \sum_{i=1}^rc_ix_i^2+h$.
\end{proof}

\section{The inequality between $\widetilde{\alpha}(f)$ and $\lct(f,J_f^2)$}\label{first_proof}

Let us begin by recalling some terminology and notation regarding valuations that will be used from now on.
Let $X$ be a smooth (irreducible) $n$-dimensional algebraic variety over an algebraically closed field $k$
(we don't assume ${\rm char}(k)=0$ to begin with, since we will use the definitions below also in the next section, when the ground field will be allowed to have 
positive characteristic). 
All ideals of $\cO_X$ we consider are coherent.
For basic facts about log canonical thresholds and multiplier ideals (over a field of characteristic $0$) we refer to \cite[Chapter~9]{Lazarsfeld}.

A \emph{divisorial valuation} on $X$
is a valuation of the function field $k(X)$ of $X$ of the form $q\cdot {\rm ord}_E$, where $q$ is a positive integer
and $E$ is a prime divisor on a normal variety $Y$ that has a birational morphism $g\colon Y\to X$
(here ${\rm ord}_E$ is the discrete valuation associated to $E$, with corresponding DVR $\cO_{Y,E}$, having
fraction field $k(Y)=k(X)$). After replacing $Y$ by a suitable open subset, we may always assume that
$Y$ is smooth and $E$ is a smooth prime divisor on $Y$. The \emph{center} $c_X(v)$ of $v=q\cdot {\rm ord}_E$ on $X$
is the closure of $g(E)$ (which is independent of the model $Y$).
For an ideal $\fra$ of $\cO_X$, we denote by $v(\fra)$ the minimum of $v(g)$, where $g$ runs over the sections
of $\fra$ on an open subset that meets $c_X(v)$.

The \emph{log discrepancy} of $q\cdot {\rm ord}_E$
is the positive integer
$$A_X(q\cdot {\rm ord}_E)=q\cdot \big({\rm ord}_E(K_{Y/X})+1\big),$$
where $K_{Y/X}$ is the effective divisor on $Y$ locally defined by the determinant of the Jacobian matrix of $g$.

Recall that for a proper nonzero ideal $\fra$ of $X$, we have
$$\lct(\fra)=\inf_v\frac{A_X(v)}{v(\fra)},$$
where the minimum is over all divisorial valuations $v$ on $X$.
If ${\rm char}(k)=0$, then it follows from the existence of log resolutions that the infimum in the formula is a minimum.
We say that $v$ \emph{computes} $\lct(\fra)$ if
$v$ achieves this minimum.

From now on we assume $k=\C$ (one could also just assume ${\rm char}(k)=0$).
We first give a proposition
concerning the minimal exponent of a general linear combination of the generators of an ideal.

\begin{prop}\label{gen_combination}
If $f_1,\ldots,f_r\in\cO_X(X)$ generate the proper nonzero ideal ${\mathfrak a}$ of $\cO_X$ and
$f=\sum_{i=1}^r\lambda_if_i$, with $\lambda_1,\ldots,\lambda_r\in\C$ general, then we have
$$\widetilde{\alpha}(f)\geq\lct({\mathfrak a}).$$
\end{prop}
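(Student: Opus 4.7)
My approach is to realize the generic linear combination as a slice of the universal family $F(x,\lambda) = \sum_{i=1}^{r} \lambda_i f_i(x) \in \cO_{X\times\A^r}$ and then combine a particularly clean log resolution of $F$ with a restriction principle for the minimal exponent.

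First I would take a log resolution $\mu\colon Y\to X$ of $\mathfrak{a}$ with $\mu^{-1}\mathfrak{a}\cdot\cO_Y = \cO_Y(-\sum_j a_j E_j)$ an SNC divisor and $K_{Y/X} = \sum_j k_j E_j$, so that $\lct(\mathfrak{a}) = \min_j (k_j+1)/a_j$. Pulling back along $\tilde{\mu} := \mu \times \id_{\A^r}$ gives
$$\tilde{\mu}^{*}F \;=\; s_E \cdot H(y,\lambda), \qquad H(y,\lambda) = \sum_{i=1}^{r}\lambda_i u_i(y),$$
where $s_E$ is a local equation for $\sum_j a_j E_j$ and the $u_i\in\cO_Y$ locally generate the unit ideal (since the $f_i$ generate $\mathfrak{a}$). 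Then $\nabla_\lambda H = (u_1,\ldots,u_r)$ never vanishes, so $V(H)$ is smooth, and for generic $\lambda$ it meets the exceptional divisors $E_j\times\A^r$ transversally. Hence $\tilde{\mu}$ is a log resolution of $(X\times\A^r, F)$ in which the strict transform component is smooth.

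Next, I would invoke the clean-resolution formula for the minimal exponent: when the strict transform of $V(F)$ is smooth and transverse to the exceptional locus, the minimal exponent is controlled entirely by the exceptional discrepancy data, giving
$$\widetilde{\alpha}(F) \;\geq\; \min_j \frac{k_j+1}{a_j} \;=\; \lct(\mathfrak{a}).$$
Finally, I would apply the restriction theorem for the minimal exponent (Saito; Musta\c{t}\u{a}--Popa): for general $\lambda\in\A^r$ the slice $X\times\{\lambda\}$ is transverse to the singular stratification of $V(F)$, so $\widetilde{\alpha}(f_\lambda) = \widetilde{\alpha}(F|_{X\times\{\lambda\}}) \geq \widetilde{\alpha}(F)$. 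Combining these two steps yields $\widetilde{\alpha}(f_\lambda)\geq\lct(\mathfrak{a})$ for $\lambda$ general, which is what is wanted.

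\textbf{Main obstacle.} The technical heart is the clean-resolution lower bound for $\widetilde{\alpha}(F)$. This is not a formal consequence of the combinatorics of log discrepancies, because in general $\widetilde{\alpha}$ is sensitive to the genuine singularities of the hypersurface (e.g.\ $\widetilde{\alpha}=\infty$ when $V(f)$ is smooth, while a log-resolution coefficient would give only $1$). The justification requires Saito's theory of the $V$-filtration on vanishing cycles, where the smooth strict-transform component contributes $\infty$ and only the exceptional divisors enter the minimum. An alternative route, avoiding Hodge modules, uses the Bernstein--Sato polynomial $b_\mathfrak{a}(s)$ of an ideal (Budur--Musta\c{t}\u{a}--Saito): its largest root is $-\lct(\mathfrak{a})$, and a generic specialization argument shows $b_{f_\lambda}(s)\mid b_\mathfrak{a}(s)$, from which the bound on the largest root of $b_{f_\lambda}(s)/(s+1)$ follows directly. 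Verifying this divisibility by tracking the relative $b$-function in the family $F$ is the parallel technical difficulty.
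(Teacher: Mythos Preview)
Your identification of the key input---the ``clean-resolution'' lower bound for the minimal exponent from \cite[Corollary~D]{MP} or \cite[Corollary~1.5]{DM}---is exactly right, and your construction of the resolution by pulling back a log resolution of $\fra$ is the same as the paper's. The difference is that you route the argument through the universal $F$ on $X\times\A^r$ and then invoke a restriction step $\widetilde{\alpha}(f_\lambda)\geq\widetilde{\alpha}(F)$, whereas the paper applies the clean-resolution bound \emph{directly} to $f_\lambda$ on $X$. For general $\lambda$, Bertini on the basepoint-free linear system $|\sum\lambda_i u_i|$ on $Y$ already gives that $\mu\colon Y\to X$ is a log resolution of $(X,f_\lambda)$ with smooth strict transform meeting $\sum E_j$ with simple normal crossings; the discrepancy data is the same $(k_j,a_j)$, so $\widetilde{\alpha}(f_\lambda)\geq\min_j(k_j+1)/a_j=\lct(\fra)$ in one step. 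Your detour through $F$ buys nothing and costs an extra theorem.

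More seriously, the restriction step you invoke is problematic: the standard restriction inequality for the minimal exponent (e.g.\ \cite[Theorem~E(2)]{MP}) goes the \emph{opposite} way, namely $\widetilde{\alpha}_P(F|_{X\times\{\lambda\}})\leq\widetilde{\alpha}_{(P,\lambda)}(F)$. Restricting to a slice that meets the singular locus of $V(F)$---which $X\times\{\lambda\}$ always does once $\fra$ is proper---can genuinely drop $\widetilde{\alpha}$ (think of $x^2+y^2$ restricted to a line through the origin). So the inequality $\widetilde{\alpha}(f_\lambda)\geq\widetilde{\alpha}(F)$ is not a consequence of known restriction or semicontinuity theorems in the form you state; even if it happens to hold here, it would require its own argument. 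The paper's direct route sidesteps this entirely.

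One further omission: you do not separate the case $\codim_X V(\fra)=1$. There some $E_j$ may fail to be $\mu$-exceptional, so the strict transform of the relevant divisor is no longer smooth and the clean-resolution bound does not apply as stated. The paper handles this case trivially via $\lct(\fra)\leq 1$ and the equality $\lct(f_\lambda)=\lct(\fra)$ for general $\lambda$, which already gives $\widetilde{\alpha}(f_\lambda)\geq\lct(f_\lambda)=\lct(\fra)$.
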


\begin{proof}
If the zero-locus $Z$ of ${\mathfrak a}$ has codimension 1 in $X$, then ${\rm lct}({\mathfrak a})\leq 1$ and in this case we have
$$\widetilde{\alpha}(f)\geq {\rm lct}(f)={\rm lct}({\mathfrak a}).$$
The equality follows from the fact that for every $t\in (0,1)$, we have the equality of multiplier ideals
$${\mathcal J}(f^t)={\mathcal J}({\mathfrak a}^t)$$
(see \cite[Proposition~9.2.26]{Lazarsfeld}). We thus may and will assume that ${\rm codim}_X(Z)\geq 2$.

The argument then proceeds as in ${\mathit loc.\,cit.}$ Let $\pi\colon Y\to X$ be a log resolution of $(X, {\mathfrak a})$ that is an isomorphism
over $X\smallsetminus Z$. By construction, if we put ${\mathfrak a}\cdot\cO_Y=\cO_Y(-E)$, then $E$ is a simple normal crossing divisor
such that if we write $E=\sum_{i=1}^Na_iE_i$, then every $E_i$ is a $\pi$-exceptional divisor. Since $\lambda_1,\ldots,\lambda_r$ are general,
it follows that if $D$ is the divisor defined by $f$, then $\pi^*(D)=F+E$, where $F$ is a smooth divisor, with no common components with $E$, and
having simple normal crossings with $E$. In particular, $D$ is a reduced divisor and $\pi$ is a log resolution of $(X,D)$ such that the strict transform of $D$ is smooth. We thus deduce
using  \cite[Corollary~D]{MP} (or \cite[Corollary~1.5]{DM})
that if $K_{Y/X}=\sum_{i=1}^Nk_iE_i$, then
$$
\widetilde{\alpha}(f)\geq\min_{i=1}^N\frac{k_i+1}{a_i}=\lct({\mathfrak a}),$$
giving the inequality in the proposition.
\end{proof}

In what follows we will also make use of a local version of the minimal exponent. Recall that if $f\in\cO_X(X)$ is nonzero and $P\in X$ is such that
$f(P)=0$, then $\max\{\widetilde{\alpha}(f\vert_U)\mid U\ni P\}$, where $U$ varies over the open neighborhoods of $P$,
 is achieved for all small enough $U$. This maximum is denoted $\widetilde{\alpha}_P(f)$ and we have $\widetilde{\alpha}(f)=\min
\{ \widetilde{\alpha}_{P}(f)\mid P\in X\}$. Equivalently, $\widetilde{\alpha}_P(f)$ is the negative of the largest root of $b_{f,P}(s)/(s+1)$, where $b_{f,P}(s)$
is the Bernstein-Sato polynomial of the image of $f$ in $\cO_{X,P}$. Indeed, it follows easily from the definition that $b_{f,P}(s)$ is the greatest common divisor
of the $b_{f\vert_U}(s)$, where $U$ varies over the open neighborhoods of $P$, and we have $b_{f,P}(s)=b_{f\vert_U}(s)$ if $U$ is small enough.

\begin{rmk}\label{rmk_gen_combination}
With the same notation as in Proposition~\ref{gen_combination}, for every $P\in X$, if $\lambda_1,\ldots,\lambda_r\in \C$ are general, then we
have
$$\widetilde{\alpha}_{P}(f)\geq\lct_P({\mathfrak a}).$$
Indeed, if we choose an open neighborhood $U$ of $P$ such that $\lct_P({\mathfrak a})=\lct({\mathfrak a}\vert_U)$, then applying the proposition
for ${\mathfrak a}\vert_U$, we obtain
$$\widetilde{\alpha}_{P}(f)\geq \widetilde{\alpha}(f\vert_U)\geq \lct({\mathfrak a}\vert_U)=\lct_P({\mathfrak a}).$$
\end{rmk}

Given $f\in\cO_X(X)$, we denote by $J_f$ the \emph{Jacobian ideal} of $f$. Recall that if $U$ is an open subset of $X$
such that $x_1,\ldots,x_n$ are algebraic coordinates on $U$ (that is, $dx_1,\ldots,dx_n$ give a trivialization of
$\Omega_U$), then $J_f$ is generated on $U$ by $\frac{\partial f}{\partial x_1},\ldots,
\frac{\partial f}{\partial x_n}$ (the fact that this does not depend on the system of coordinates is straightforward to check).
If $P$ is a (closed) point on $X$, then we have an isomorphism $\widehat{\cO_{X,P}}\simeq \C\llbracket x_1,\ldots,x_n\rrbracket$
and the above definition of $J_f$ is compatible, via passing to completion, with the one we gave in Section~\ref{formal_equivalence}.
Note that if $f(P)=0$, then $P$ is an isolated point in the zero-locus of $J_f$ if and only if
$f$ has an isolated singular point at $P$. In this case, we consider the \emph{Milnor number}
$\mu_P(f)=\dim_{\C}(\cO_{X,P}/J_f)$.

We next give a lemma showing that the minimal exponent does not change under formal equivalence.

\begin{lem}\label{invar_formal_equivalence}
Given nonzero $f,g\in\cO_X(X)$ and $P\in X$ such that $f(P)=0=g(P)$, if the images of $f$ and $g$ in $\widehat{\cO_{X,P}}$ are formally equivalent, then
$\widetilde{\alpha}_P(f)=\widetilde{\alpha}_P(g)$.
\end{lem}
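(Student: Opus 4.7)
The plan is to reduce the claim to the fact that the Bernstein--Sato polynomial $b_{f,P}(s)$, and hence $\widetilde{\alpha}_P(f)$, depends only on the image of $f$ in the completion $\widehat{\cO_{X,P}}$. To make this precise, I would introduce a formal version of $b_{f,P}(s)$: letting $\widehat{\cD}_{X,P}$ denote the ring of formal differential operators generated over $\widehat{\cO_{X,P}}$ by the derivations $\partial/\partial x_i$ for a system of formal coordinates at $P$, define $\widehat{b}_{f,P}(s)\in\C[s]$ as the monic generator of the ideal of those $b(s)$ for which there exists $L(s)\in\widehat{\cD}_{X,P}[s]$ with
\[
L(s)\,f^{s+1}=b(s)\,f^s
\]
in the module $\widehat{\cO_{X,P}}[s,f^{-1}]\cdot f^s$.

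The first step, and the main obstacle, will be to check the equality $b_{f,P}(s)=\widehat{b}_{f,P}(s)$. The natural route is through faithful flatness of $\widehat{\cO_{X,P}}$ over $\cO_{X,P}$, upgraded to the analogous faithful-flatness statement for the associated rings of differential operators: the existence of an $L(s)$ realising a given $b(s)$ is then equivalent in the two settings, so the corresponding ideals in $\C[s]$ coincide and therefore so do their monic generators. An alternative is to invoke Artin approximation to replace a formal automorphism by an \'etale-local one and then use invariance of $b_{f,P}(s)$ under \'etale changes of coordinates.

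The second step is to show that $\widehat{b}_{f,P}(s)$ is invariant under automorphisms of $\widehat{\cO_{X,P}}$. Any automorphism $\varphi$ of $\widehat{\cO_{X,P}}$ carrying (the image of) $f$ to (the image of) $g$ lifts canonically to an automorphism of $\widehat{\cD}_{X,P}$, since $\varphi$ transports a system of formal coordinates and their associated derivations to another such system; this lifted automorphism sends a functional equation $L(s)f^{s+1}=b(s)f^s$ to an equation $L'(s)g^{s+1}=b(s)g^s$ with the same $b(s)$. Running the argument in both directions yields $\widehat{b}_{f,P}(s)=\widehat{b}_{g,P}(s)$, and combined with the first step this gives $b_{f,P}(s)=b_{g,P}(s)$. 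The largest roots of $b_{f,P}(s)/(s+1)$ and $b_{g,P}(s)/(s+1)$ therefore coincide, yielding $\widetilde{\alpha}_P(f)=\widetilde{\alpha}_P(g)$.

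In summary, the hard part lies in justifying that the Bernstein--Sato polynomial at $P$ can be computed using formal differential operators; once that is in place, invariance under formal automorphisms is entirely functorial.
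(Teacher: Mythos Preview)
Your approach is essentially identical to the paper's: reduce to the formal Bernstein--Sato polynomial (whose existence the paper attributes to Bj\"{o}rk), use faithful flatness of $\cO_{X,P}\to\widehat{\cO_{X,P}}$ to identify $b_{f,P}(s)$ with $\widehat{b}_{f,P}(s)$, and then observe that the latter is invariant under automorphisms of $\widehat{\cO_{X,P}}$. The paper is terser on the second step and does not mention the Artin-approximation alternative, but the strategy is the same.
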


\begin{proof}
Recall that the existence of the Bernstein-Sato polynomial has been proved in \cite{Bjork} for rings of formal power series; in particular, this applies to
$\widehat{\cO_{X,P}}$. Moreover, if $b_{f,P}(s)$ and $b_{\widehat{f},P}(s)$ are the Bernstein-Sato polynomials of the images of $f$ in $\cO_{X,P}$ and
$\widehat{\cO_{X,P}}$, respectively, then it is easy to deduce, using the definition of Bernstein-Sato polynomials and the fact that the morphism
$\cO_{X,P}\to\widehat{\cO_{X,P}}$ is faithfully flat, that $b_{f,P}(s)=b_{\widehat{f},P}(s)$. Since $f$ and $g$ are formally equivalent, we have
$b_{\widehat{f},P}(s)=b_{\widehat{g},P}(s)$ and thus $b_{f,P}(s)=b_{g,P}(s)$. Dividing by $(s+1)$ and taking the negative of the largest roots gives the assertion in the proposition.
\end{proof}

We can now prove the inequality between $\widetilde{\alpha}(f)$ and $\lct(f,J_f^2)$:

\begin{proof}[Proof of Theorem~\ref{thmB_intro}]
It is enough to show that for every $P$ with $f(P)=0$, we have
$\widetilde{\alpha}_P(f)\geq {\rm lct}_P(f,J_f^2)$. We may assume that $f$ is singular at $P$ since otherwise the assertion is trivial.
Furthermore, we may assume that we have algebraic coordinates $x_1,\ldots,x_n$ on $X$
and let $g=c_0 f+\sum_{i,j=1}^nc_{i,j}\frac{\partial f}{\partial x_i}\frac{\partial f}{\partial x_j}$, with $c_0, c_{i,j}\in\C$ general.
In particular, we have $c_0\neq 0$ and $c_0^{-1}g-f\in J_f^2$, hence it follows from Propositions~\ref{prop1_formal_equiv} and \ref{prop2_formal_equiv}
that the images of $c_0^{-1}g$ and $f$ in $\widehat{\cO_{X,P}}$ are formally equivalent. We thus get
$\widetilde{\alpha}_P(f)=\widetilde{\alpha}_P(c_0^{-1}g)=\widetilde{\alpha}_P(g)$ by Lemma~\ref{invar_formal_equivalence}.
Since $\widetilde{\alpha}_P(g)\geq {\rm lct}_P(f,J_f^2)$ by Remark~\ref{rmk_gen_combination}, we conclude that $\widetilde{\alpha}_P(f)\geq {\rm lct}_P(f,J_f^2)$.
\end{proof}

We next deduce the inequality involving the minimal exponent and the Milnor number in the case of isolated singularities:

\begin{proof}[Proof of Corollary~\ref{cor_thmB_intro}]
If $J$ is a coherent ideal on $X$ and $P$ is an isolated point of the zero-locus of $J$, then
$$e(J,\cO_{X,P})\cdot {\rm lct}_P(J)^n\geq n^n,$$
where $e(J,\cO_{X,P})$ is the Hilbert-Samuel multiplicity of $\cO_{X,P}$ with respect to the ideal $J\cdot\cO_{X,P}$
(see \cite[Theorem~0.1]{dFEM}). We apply this with $J=J_f$. Since $J_f\cdot\cO_{X,P}\subseteq \cO_{X,P}$ is a $0$-dimensional ideal
generated by $n$ elements, it follows that it is generated by a regular sequence, hence $e(J_f,\cO_{X,P})=\ell(\cO_{X,P}/J_f)=\mu_P(f)$
(see \cite[Theorem~14.11]{Matsumura}).

On the other hand, it follows from Theorem~\ref{thmB_intro} that
$$\widetilde{\alpha}_P(f)\geq {\rm lct}_P(f,J_f^2)\geq {\rm lct}_P(J_f^2)=\frac{{\rm lct}_P(J_f)}{2}.$$
By combining these facts we obtain
$$n^n\leq \mu_P(f)\cdot {\rm lct}_P(J_f)^n\leq \mu_P(f)\cdot 2^n\cdot \widetilde{\alpha}_P(f)^n,$$
which gives the assertion in the corollary.
\end{proof}

\begin{rmk}\label{rmk_cor_thmB_intro}
If $f\in {\mathbf C}[x_1,\ldots,x_n]$ is homogeneous of degree $d\geq 2$, with an isolated singularity, then $\mu_P(f)=(d-1)^n$ and
$\widetilde{\alpha}(f)=\frac{n}{d}$. In particular, we see that if $d=2$, then the inequality in Corollary~\ref{cor_thmB_intro} becomes an equality.
It would be interesting to find stronger inequalities that are satisfied when we avoid some special cases (for example, when we assume
that ${\rm mult}_P(f)\geq 3$). In this direction, note that it is shown in \cite[Theorem~1.6]{AS} that if $n=2$
(in which case $\widetilde{\alpha}_P(f)={\rm lct}_P(f)$) and $f$ is analytically
irreducible at $P$, with a singularity whose value semigroup is different from $\langle 2,3\rangle$ and $\langle 2,5\rangle$, then
$${\rm lct}_P(f)^2\cdot\mu_P(f)>2.$$
\end{rmk}

We finally deduce from Theorem~\ref{thmB_intro} the fact that $\lct(f,J_f^2)$ detects rational singularities.

\begin{proof}[Proof of Theorem~\ref{thmA_intro}]
It follows from
\cite[Theorem~0.4]{Saito-B} that the hypersurface $H$ defined by $f$ does not have rational singularities if and only if
$\widetilde{\alpha}_f\leq 1$, in which case we have $\widetilde{\alpha}_f=\lct(f)$. We deduce from
Theorem~\ref{thmB_intro} that in this case $\lct(f)\geq \lct(f,J_f^2)$, while the reverse inequality simply follows from the inclusion
$(f)\subseteq (f, J_f^2)$. This proves i).

The assertion in ii) is straightforward: in fact, we have the more general statement below (note that since the irreducible components of $H$ do not intersect,
we may assume that $H$ is irreducible, and we have
$(f)\subsetneq (f, J_f^2)$ since $H$ is reduced).
\end{proof}

\begin{prop}\label{rat_lct_maximal}
If $f\in\cO_X(X)$ defines an irreducible hypersurface $H$ with rational singularities, then for every coherent ideal $\fra$ of $\cO_X$ with $(f)\subsetneq\fra$, we have
$\lct(\fra)>\lct(f)=1$.
\end{prop}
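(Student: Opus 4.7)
The plan is to work with the valuative description
$$\lct(\fra)=\inf_v\frac{A_X(v)}{v(\fra)},$$
the infimum ranging over divisorial valuations $v$ on $X$ with $v(\fra)>0$, and to show that $A_X(v)>v(\fra)$ strictly for every such $v$. As preliminaries I would record that rational singularities of $H$ force $H$ to be normal, hence reduced, so $f$ generates the prime ideal $I(H)\subseteq\cO_X$; moreover $(X,H)$ is in particular log canonical, so $\lct(f)=1$. Since the infimum above is attained on a common log resolution of $(X,H)$ and $\fra$, reducing it to a minimum over finitely many $\ord_{E_i}$, a uniform strict inequality will give $\lct(\fra)>1$.

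Fix any $v$ with $v(\fra)>0$; because $f\in\fra$ we have $v(f)>0$ and hence $c_X(v)\subseteq H$. The argument will split into two cases according to whether $c_X(v)$ equals $H$ or is strictly contained in $H$. In the first case, the valuation ring of $v$ dominates the DVR $\cO_{X,\eta_H}$ at the generic point of $H$, so $v=q\cdot\ord_H$ for some $q\in\Z_{>0}$ and hence $A_X(v)=q=v(f)$. Here I would use the hypothesis $(f)\subsetneq\fra$, combined with the primeness of $(f)=I(H)$, to pick $g\in\fra\setminus I(H)$; then $\ord_H(g)=0$ forces $v(\fra)\leq v(g)=0<q=A_X(v)$.

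In the second case $v$ is exceptional over $X$, so on a suitable log resolution $\pi\colon Y\to X$ of $(X,H)$ one can write $v=q\cdot\ord_E$ for some $\pi$-exceptional prime divisor $E$. The crucial input here is that for a reduced Cartier divisor $H$ on the smooth variety $X$, rational singularities of $H$ are equivalent to the pair $(X,H)$ being purely log terminal, via inversion of adjunction together with Elkik's theorem identifying canonical and rational singularities in the Gorenstein case. This yields $A_X(\ord_E)>\ord_E(f)$ for every such $E$, and combining with $v(f)\geq v(\fra)$ gives $A_X(v)>v(\fra)$.

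The main obstacle is the first case, where one has the equality $A_X(v)=v(f)$: here rational singularities of $H$ supply no strict inequality on their own, and the proof must genuinely use both the strict containment $(f)\subsetneq\fra$ and the primeness of $(f)$—that is, the irreducibility together with the reducedness of $H$—to extract the slack. Once this is handled, the two cases together cover every divisorial $v$, and taking the minimum on a common log resolution yields the desired inequality $\lct(\fra)>\lct(f)=1$.
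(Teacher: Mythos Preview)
Your proof is correct and follows essentially the same route as the paper: the paper's first observation ``if $\ord_E(\fra)>0$ then $E$ is $\pi$-exceptional'' is exactly your Case~1 (using $(f)\subsetneq\fra$ and primeness of $(f)$ to force $\ord_H(\fra)=0$), and for exceptional $E$ the paper likewise deduces $A_X(\ord_E)\geq\ord_E(f)+1$ from rational singularities of $H$. The only cosmetic difference is that the paper cites Elkik together with a result of Stevens to get that $(X,H)$ is canonical, whereas you cite Elkik together with inversion of adjunction to get that $(X,H)$ is plt; since $X$ is smooth and $H$ is Cartier the discrepancies are integers, so these two conditions coincide.
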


\begin{proof}
Let $\pi\colon Y\to X$ be a log resolution of $(X,H)$ that is at the same time a log resolution
of the ideal $\fra$. Note first that if $E$ is a prime divisor on $Y$ such that ${\rm ord}_E(\fra)>0$, then
$E$ is a $\pi$-exceptional divisor. We know that
$H$ has rational singularities if and only if it has canonical singularities by a result of Elkik (see \cite[Theorem~11.1]{Kollar}); moreover,
this is the case if and only if the pair
$(X,H)$ has canonical singularities by a result of Stevens (see \cite[Theorem~7.9]{Kollar}). Since $(X,H)$ has canonical singularities and
$E$ is exceptional, we have
$$A_X({\rm ord}_E)\geq {\rm ord}_E(f)+1\geq {\rm ord}_E(\fra)+1.$$
This holds for all prime divisors $E$ on $Y$ for which ${\rm ord}_E(\fra)>0$, hence we conclude that
$\lct(\fra)>1$.
\end{proof}

\begin{rmk}\label{analytic}
While we worked in the algebraic setting, all results in this section have analogues when $X$ is a complex manifold and $f$ is a holomorphic function on $X$.
Indeed, note that \cite[Corollary~1.5]{DM} that was used in the proof of Proposition~\ref{gen_combination} also holds in the analytic setting (the proof in
${\mathit loc.\,cit.}$ applies verbatim in this setting). All the other arguments in this section extend to the analytic context in a straightforward way.
\end{rmk}

\begin{rmk}\label{rmk_same_valuations}
The key ingredient for the results in this section was the following consequence of Proposition~\ref{prop2_formal_equiv}:
if $X$ is smooth, with algebraic coordinates $x_1,\ldots,x_n$, $f\in\cO_X(X)$ is nonzero, having a singular point at $P$, and
 $g_{\lambda}=\lambda_0 f+\sum_{i,j=1}^n\lambda_{i,j}\tfrac{\partial f}{\partial x_i}\tfrac{\partial f}{\partial x_j}$,
 then there is an open subset $U\subseteq \C^{n^2+1}$ containing $(1,0,\ldots,0)$ such that any two $g_{\lambda}$ with 
 $\lambda\in U$ are formally equivalent at $P$. In particular, this implies that $\lct_P(f)=\lct_P(g_{\lambda})$ for all $\lambda\in U$.
In fact, it also implies that if this log canonical threshold is $<1$, then 
a divisor $E$ over $X$ computes $\lct_P(f)$ if and only if it computes $\lct_P(g_{\lambda})$: this follows from the general theorem below
(see also Remark~\ref{rmk_same_valuations}),
but it will not be used later.
\end{rmk}

\begin{thm}\label{prop_same_valuations}
Let $X$ be a smooth $n$-dimensional complex  algebraic variety and $P$ a point in $X$. Consider a nonzero ideal $\fra\subseteq\cO_X$ generated
by $f_1,\ldots,f_r\in\cO_X(X)$ and for every $\lambda=(\lambda_1,\ldots,\lambda_r)\in {\mathbf C}^r$
put
$f_{\lambda}=\lambda_1f_1+\ldots+\lambda_rf_r$. If 
 $U\subseteq {\mathbf C}^r$ is a nonempty open subset such that all pairs
$\big(X,{\rm div}(f_{\lambda})\big)$, with $\lambda\in U$, are isomorphic, then
for every $c$, with $0<c\leq\min\{\lct(\fra),1\}$, and every divisorial valuation ${\rm ord}_E$ on $X$ such that
\begin{equation}\label{eq0_same_val}
A_X({\rm ord}_E)-c\cdot {\rm ord}_E(f_{\lambda_0})<1-c,
\end{equation}
for some $\lambda_0\in U$, we have
${\rm ord}_E(f_{\lambda})={\rm ord}_E(f_{\lambda_0})$ for every $\lambda\in U$. 
In particular, if $\lct(\fra)<1$, then for every $\lambda,\lambda'\in U$, 
 a divisorial valuation ${\rm ord}_E$ computes ${\rm lct}(f_{\lambda})$ if and only if it computes
 $\lct(f_{\lambda'})$. 
\end{thm}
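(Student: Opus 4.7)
The plan is to prove the main assertion by transporting the valuation $v = \ord_E$ via the isomorphisms of pairs and then arguing rigidity, so that the transported valuations must all coincide with $v$ on the connected component of $\lambda_0$ in $U$. After passing to that component, for each $\lambda \in U$ I choose an isomorphism $\psi_\lambda \colon X \to X$ realizing the given iso of pairs, arranged so that $\psi_{\lambda_0} = \id$ and $\psi_\lambda^{*} f_\lambda = u_\lambda \cdot f_{\lambda_0}$ for some global unit $u_\lambda \in \cO_X(X)^{*}$ (the latter because $\psi_\lambda$ carries the divisor of $f_\lambda$ to that of $f_{\lambda_0}$). Define the divisorial valuation $w_\lambda$ on $X$ by $w_\lambda(g) := v(g \circ \psi_\lambda)$. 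A short check gives $A_X(w_\lambda) = A_X(v)$, $w_\lambda(f_\lambda) = v(u_\lambda f_{\lambda_0}) = v(f_{\lambda_0})$, and $w_{\lambda_0} = v$, so that $w_\lambda$ satisfies the analog of (\ref{eq0_same_val}) for the pair $(X, c \cdot {\rm div}(f_\lambda))$.

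The main assertion then follows from the rigidity claim that $w_\lambda = v$ for every $\lambda \in U$, since this gives $\ord_E(f_\lambda) = v(f_\lambda) = w_\lambda(f_\lambda) = v(f_{\lambda_0}) = \ord_E(f_{\lambda_0})$. To establish rigidity I would fix a common log resolution $\pi \colon Y \to X$ of the ideal $\fra$ that extracts $E$ and argue that the set of prime divisors on $Y$ (together with positive integer multiplicities) giving rise to divisorial valuations $w$ on $X$ with $A_X(w) = A_X(v)$ and satisfying a strict inequality of the form $A_X(w) - c \cdot w(f_\mu) < 1 - c$ for some $\mu$ in a neighborhood of $\lambda_0$ is finite. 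Together with the continuity of the family $\lambda \mapsto w_\lambda$ (which holds once the $\psi_\lambda$ are arranged in an algebraic family after shrinking $U$), this finiteness forces the family to be locally constant, hence identically equal to $w_{\lambda_0} = v$.

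The ``in particular'' assertion is immediate from the main statement: if $\lct(\fra) < 1$ and $v = \ord_E$ computes $\lct(f_{\lambda_0})$, set $c := \lct(f_{\lambda_0}) \leq \lct(\fra)$, so that $A_X(v) - c \cdot v(f_{\lambda_0}) = 0 < 1 - c$ and (\ref{eq0_same_val}) holds. The main statement then yields $v(f_\lambda) = v(f_{\lambda_0})$ for every $\lambda \in U$, and since $\lct(f_\lambda) = \lct(f_{\lambda_0})$ is an isomorphism invariant, $v$ computes $\lct(f_\lambda)$ for every such $\lambda$; the converse is symmetric by interchanging the roles of $\lambda$ and $\lambda'$.

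The main obstacle will be the finiteness/rigidity step, which requires a quantitative control on the set of divisorial valuations of prescribed log discrepancy satisfying the strict inequality (\ref{eq0_same_val}) uniformly in $\lambda$; one will also need to justify that the isomorphisms $\psi_\lambda$ can be arranged into a sufficiently continuous family over a Zariski neighborhood of $\lambda_0$. A potentially cleaner alternative that bypasses the continuous family would be to work directly with multiplier ideals and adjoint-type invariants of the pairs $(X, c \cdot {\rm div}(f_\lambda))$, which are iso-invariants, and to characterize $v$ intrinsically through its contribution to a specific jumping number, thereby reducing the rigidity to a structural statement about log canonical places.
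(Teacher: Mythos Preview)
Your approach has a genuine gap at the continuity step, which you yourself flag as the main obstacle but do not resolve. The hypothesis is only that for each $\lambda\in U$ there exists \emph{some} isomorphism $\psi_\lambda$ of pairs; there is no reason these can be assembled into an algebraic or even continuous family over $U$. For a general smooth $X$ the automorphism group need not be a finite-type scheme, and even when it is, you are selecting $\psi_\lambda$ from a nontrivial coset of the stabilizer of ${\rm div}(f_{\lambda_0})$ with no canonical choice. Without continuity the map $\lambda\mapsto w_\lambda$ into a discrete set does not force local constancy, and the rigidity claim collapses. There is also a secondary issue: the transported valuation $w_\lambda$ satisfies the inequality (\ref{eq0_same_val}) only relative to $f_\lambda$, so a priori it lies in the $\lambda$-dependent finite set $\{\ord_G:A_X(\ord_G)-c\cdot\ord_G(f_\lambda)<1-c\}$, not in a single fixed one; to produce a common finite target you would essentially have to prove the paper's key step anyway.

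The paper's proof sidesteps both problems. It uses the isomorphism hypothesis only through the \emph{cardinality} of the finite sets
\[
D(X,f_\lambda^c,a):=\{\ord_G : A_X(\ord_G)-c\cdot\ord_G(f_\lambda)<a\},\qquad a\le 1-c,
\]
which is the same for all $\lambda\in U$---a purely numerical isomorphism invariant requiring no compatibility among the $\psi_\lambda$. It then fixes a log resolution $\pi\colon Y\to X$ of $\fra$ and observes that for $\lambda$ in a nonempty open $V\subseteq U$ (where $\pi^*{\rm div}(f_\lambda)=F+Z_\lambda$ with $Z_\lambda$ in generic SNC position), every $\ord_G\in D(X,f_\lambda^c,a)$ is already a divisor on $Y$ not lying in $Z_\lambda$, so $\ord_G(f_\lambda)=\ord_G(\fra)$. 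Since $\ord_G(f_{\lambda'})\ge\ord_G(\fra)$ for every $\lambda'\in U$, this yields a direct inclusion $D(X,f_\lambda^c,a)\subseteq D(X,f_{\lambda'}^c,a)$ without transporting any valuation. Equal cardinality then forces equality for all $\lambda\in U$, and varying $a$ pins down $\ord_E(f_\lambda)$ exactly. Your proposed alternative via multiplier ideals and jumping numbers is in the right spirit, but the decisive device is this cardinality-plus-inclusion argument, which your proposal is missing. (Your derivation of the ``in particular'' from the main assertion is fine and matches the paper.)
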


\begin{proof}
If the inequality (\ref{eq0_same_val}) holds for $c$, then it also holds for $c'>0$ with $0<c-c'\ll 1$. 
After replacing $c$ by such $c'$, 
we may assume that 
$c<\min\{\lct(\fra),1\}=\lct(f_{\lambda})$ for $\lambda\in U$.

For every $\lambda\in U$ and every $a\leq 1-c$, let 
$D(X,f_{\lambda}^c,a)$ be the set of divisorial valuations ${\rm ord}_G$ on $X$ that satisfy 
$$
A_X({\rm ord}_G)-c\cdot {\rm ord}_G(f_{\lambda})<a.
$$
Note that since $c<\lct(f_{\lambda})$, the set $D(X,f_{\lambda}^c, a)$ is finite by 
\cite[Proposition~2.36(2)]{KollarMori}. Since by assumption any two pairs $(X,f_{\lambda})$ and
$(X,f_{\lambda'})$ with $\lambda,\lambda'\in U$ are isomorphic, all sets $D(X, f_{\lambda}^c, a)$ with $\lambda\in U$ have the same number of elements. 

Let $\pi\colon Y\to X$ be a log resolution of $(X,\fra)$ and let us write $\fra\cdot\cO_Y=\cO_Y(-F)$. 
Since $c<{\rm lct}(\fra)$ is klt,
we may and will assume that if we write
$$c\cdot F-K_{Y/X}=A-B,$$
where $A$ and $B$ are effective divisors without common components, then $A$ is smooth (see \cite[Proposition~2.36(1)]{KollarMori}). 
There is a nonempty open subset $V\subseteq U$ such that for every $\lambda\in V$, we have
$$\pi^*\big({\rm div}(f_{\lambda})\big)=F+Z_{\lambda},$$
with $Z_{\lambda}$ smooth, without common components with $F$ and $K_{Y/X}$, and having SNC with these divisors. 

The key point is that if $\lambda\in V$ and a divisorial valuation ${\rm ord}_G$ lies in 
$D(X,f_{\lambda}^c,a)$, then $G$ is in fact a divisor on $Y$ (and it does not appear in $Z_{\lambda}$);
this follows, for example, from the fact that $A$ is smooth, $a\leq 1-c$,  and \cite[Corollary~2.31(3)]{KollarMori}). In particular, we have ${\rm ord}_G(\fra)={\rm ord}_G(f_{\lambda})$. 
Note also that for every $\lambda'\in U$, we have 
$$
{\rm ord}_G(f_{\lambda'})\geq {\rm ord}_G(\fra)
$$
and thus
$$A_X({\rm ord}_G)-c\cdot {\rm ord}_G(f_{\lambda'})\leq A_X({\rm ord}_G)-c\cdot {\rm ord}_G(\fra)=
A_X({\rm ord}_G)-c\cdot {\rm ord}_G(f_{\lambda})<a.$$
This implies that 
\begin{equation}\label{eq_same_val}
D(X,f_{\lambda}^c,a)\subseteq D(X,f_{\lambda'}^c,a)
\end{equation}
and since these are finite sets, with the same number of elements, we have equality in (\ref{eq_same_val}). 
Therefore for every $a\leq 1-c$, the set $D(X,f^c_{\lambda},a)$ with  $\lambda\in U$ does not depend on $\lambda$. 

Given a divisorial valuation ${\rm ord}_E\in D(X,f^c_{\lambda}, 1-c)$, with $\lambda\in U$, by considering all $a$ with
$$A_X({\rm ord}_E)-c\cdot {\rm ord}_E(f_{\lambda})<a\leq 1-c,$$
we conclude from the fact that we have equality in (\ref{eq_same_val}) that for every $\lambda'\in U$ we have
$$A_X({\rm ord}_E)-c\cdot {\rm ord}_E(f_{\lambda})\geq A_X({\rm ord}_E)-c\cdot {\rm ord}_E(f_{\lambda'}),$$
hence ${\rm ord}_E(f_{\lambda'})\geq {\rm ord}_E(f_{\lambda})$. By symmetry, this is in fact an equality, giving the first assertion in the theorem. 

If $c=\lct(\fra)<1$, then the set of divisorial valuations ${\rm ord}_E$ that compute $\lct(f_{\lambda})$ is precisely
$$\bigcap_{0<a\leq 1-c}D(X,f_{\lambda}^c,a).$$
Therefore the last assertion in the theorem follows from the first one.
\end{proof}

\begin{rmk}\label{rmk_same_valuations}
We have stated the proposition in the setting of this paper, but the reader will have no trouble seeing that the same proof gives the assertion 
more generally for a linear system of $\Q$-Cartier divisors on any variety with klt singularities. We also note that if instead of assuming 
that the pairs $\big(X,{\rm div}(f_{\lambda})\big)$, with $\lambda\in U$, are isomorphic, we only require them to be formally equivalent at some $P\in X$,
the assertions in the theorem hold for the divisorial valuations with center at $P$. In particular, if $\lct_P(\fra)<1$, then for every $\lambda,\lambda'\in U$, 
 a divisorial valuation ${\rm ord}_E$ computes ${\rm lct}_P(f_{\lambda})$ if and only if it computes
 $\lct_P(f_{\lambda'})$. This follows by replacing $X$ with ${\rm Spec}(\widehat{\cO_{X,P}})$ (for the relation between the invariants of valuations over $X$
 and those of the corresponding valuations over ${\rm Spec}(\widehat{\cO_{X,P}})$, see for example \cite[Proposition~A.14 and Remark~A.15]{dFEM2}). 

\end{rmk}

\section{An approach to $\lct(f,J_f^2)$ via arcs}\label{second_proof}

In this section we use the approach to valuations via arcs to prove Theorem~\ref{thm3_intro}, which we apply to
deduce Corollaries~\ref{corC_intro} and \ref{corD_intro} and give another proof of Theorem~\ref{thmA_intro}. We now assume that
$X$ is a smooth (irreducible)  variety, of dimension $n$, over an algebraically closed field $k$ of arbitrary characteristic.

We first review briefly the definition of jet and arc schemes. For details, see for example \cite{EM}.
For every $m\geq 0$, the $m^{\rm th}$ \emph{jet scheme} $X_m$ of $X$ is a scheme over $X$ with the property that for every
$k$-algebra $A$, we have a functorial bijection
$${\rm Hom}({\rm Spec} A,X_m)\simeq {\rm Hom}\big({\rm Spec} A[t]/(t^{m+1}),X\big).$$
In particular, the points of $X_m$ are in canonical bijection with the \emph{$m$-jets} on $X$, that is,
maps ${\rm Spec}\,k[t]/(t^{m+1})\to X$.
Given such an $m$-jet $\gamma\colon {\rm Spec}\,k[t]/(t^{m+1})\to X$,
we denote by $\gamma(0)$ the image of the closed point and by $\gamma^*$ the induced ring homomorphism
$\cO_{X,\gamma(0)}\to k[t]/(t^{m+1})$.
Truncation induces morphisms $X_m\to X_p$ whenever $p<m$ and these satisfy the obvious compatibilities.
Note that we have a canonical isomorphism  $X_0\simeq X$ and we denote by $\pi_m$ the truncation morphism
$X_m\to X$.
With this notation, we have $\pi_m(\gamma)=\gamma(0)$.

All truncation morphisms are affine, hence we may consider the projective limit $X_{\infty}$ of the system $(X_m)_{m\geq 0}$.
This is the \emph{space of arcs} (or \emph{arc scheme}) of $X$. Its $k$-valued points are in canonical bijection with maps $\Spec\,k\llbracket t\rrbracket\to X$.
In what follows, we identify $X_{\infty}$ with the corresponding set of $k$-valued points.
For an arc $\gamma$, we use the notation $\gamma(0)$ and $\gamma^*$ as above. Note that the space of arcs $X_{\infty}$ comes
endowed with truncation maps $\psi_m\colon X_{\infty}\to X_m$ compatible with the truncation morphisms between jet schemes.

Since $X$ is smooth and $n$-dimensional, every morphism $X_m\to X$ is locally trivial in the Zariski topology, with fiber $\A^{mn}$.
In fact, if $x_1,\ldots,x_n$ are algebraic coordinates on an open subset $U$ of $X$,
then we have an isomorphism
$$\pi_m^{-1}(U)\simeq U\times(\A^m)^n,$$
which maps $\gamma$ to $\big(\gamma(0),\gamma^*(x_1),\ldots,\gamma^*(x_n)\big)$
(note that each $\gamma^*(x_i)$ lies in $t k[t]/t^{m+1}k[t]\simeq k^m$).
In particular, every $X_m$ is a smooth, irreducible variety, of dimension $(m+1)n$.
Moreover, using the above isomorphisms we see that each truncation morphism $X_m\to X_p$,
with $p<m$, is locally trivial, with fiber $\A^{(m-p)n}$.

We next turn to the connection between divisorial valuations and certain subsets in the space of arcs.
A \emph{cylinder} in $X_{\infty}$ is a subset of the form $C=\psi_m^{-1}(S)$, where $S$ is a constructible subset of $X_m$.
In this case $C$ is \emph{irreducible}, \emph{closed}, \emph{open}, or \emph{locally closed}
(with respect to the Zariski topology on $X_{\infty}$)
if and only if $S$ has this property. In particular, we have irreducible decomposition
for locally closed cylinders.
The \emph{codimension}
of a cylinder $C=\psi_m^{-1}(S)$ is defined as
$${\rm codim}(C)={\rm codim}_{X_m}(S).$$
It is clear that this is independent of the way we write $C$ as the inverse image of a constructible set.

An important example of cylinders is provided by \emph{contact loci}. Given a coherent ideal ${\mathfrak a}$ in $\cO_X$,
we put ${\rm ord}_{\gamma}({\mathfrak a})\in\Z_{\geq 0}\cup\{\infty\}$ to be the $m$ such that $\gamma^*({\mathfrak a})k\llbracket t\rrbracket
=(t^m)$, with the convention that
${\rm ord}_{\gamma}({\mathfrak a})=\infty$ if $\gamma^*({\mathfrak a})=0$. The set ${\rm Cont}^{\geq m}({\mathfrak a})$ of $X_{\infty}$
consisting of all arcs $\gamma$ with ${\rm ord}_{\gamma}({\mathfrak a})\geq m$ is a closed cylinder in $X_{\infty}$. We similarly define the
locally closed cylinder ${\rm Cont}^m({\mathfrak a})$.

It turns out that one can use cylinders in $X_{\infty}$ in order to describe divisorial valuations on $X$. We simply state the results
and refer for details and proofs to \cite{ELM} for the characteristic $0$ case and to \cite{Zhu} for the general case (which only assumes that
the ground field $k$ is perfect).
 We assume, for simplicity, that $X$ is affine, though everything extends to the general case
in a straightforward way. For every closed, irreducible cylinder $C\subseteq X_{\infty}$, that does not dominate
$X$ and every $h\in\cO_X(X)$ nonzero, we put
$${\rm ord}_C(h):=\min\{{\rm ord}_{\gamma}(h)\mid\gamma\in C\}\in\Z_{\geq 0}.$$
This extends to a valuation of the function field of $X$; in fact, this is a divisorial valuation, whose center is the
closure of $\psi_0(C)$.

Conversely, if $v=q\cdot {\rm ord}_E$ for some smooth prime divisor $E$ on the smooth variety $Y$, with a birational
morphism $g\colon Y\to X$,
then we have an induced morphism $g_{\infty}\colon Y_{\infty}\to X_{\infty}$ and if $C(v)$ is the closure of $g_{\infty}\big({\rm Cont}^{\geq q}(\cO_Y(-E))\big)$,
then $C(v)$ is an irreducible closed cylinder in $X_{\infty}$ and ${\rm ord}_{C(v)}=v$. Moreover, a key fact is that
$${\rm codim}\big(C(v)\big)=A_X(v),$$
see \cite[Theorem~A(2)]{Zhu}.
In addition, for every closed, irreducible cylinder $C\subseteq X_{\infty}$ that does not dominate $X$, if $v={\rm ord}_C$, then
$C\subseteq C(v)$. In particular, we have ${\rm codim}(C)\geq A_X(v)$.

After this overview, we can prove our general result about valuations.

\begin{proof}[Proof of Theorem~\ref{thm3_intro}]
After possibly replacing $X$ by an affine open subset that intersects nontrivially $C_X(v)$, we may and will assume that $X$ is affine.
Let $C=C(v)$, so that ${\rm ord}_C=v$ and ${\rm codim}(C)=A_X(v)$.
We put $Z=c_X(v)$, so that $Z$ is the closure of $\psi_0(C)$.
If $m=v(f)$ and $e=v(J_f)$,
we have by hypothesis $0<e<\frac{1}{2}m$. Since $m={\ord}_C(f)$ and $e={\rm ord}_C(J_f)$, we have $C\subseteq {\rm Cont}^{\geq m}(f)\cap
{\rm Cont}^{\geq e}(J_f)$.
Let $C_0:=C\cap {\rm Cont}^e(J_f)$, which is a nonempty subcylinder of $C$, open in $C$.
Since $C$ is irreducible, we have $C=\overline{C_0}$.
We also consider the locally closed cylinder
$$C':={\rm Cont}^{\geq (m-1)}(f)\cap{\rm Cont}^e(J_f)\cap \psi_0^{-1}(Z).$$
It is clear that $C_0$ is a closed subset of $C'$. We make the following

\noindent {\bf Claim}. $C_0$ is \emph{not} an irreducible component of $C'$.

Assuming the claim, let $W$ be an irreducible component of $C'$ that contains $C_0$ and
$\overline{W}$ its closure in $X_{\infty}$. Note that $\overline{W}$ is an irreducible, closed cylinder
in $X_{\infty}$ such that $\psi_0(\overline{W})\subseteq Z$. Since we also have
$$Z\subseteq\overline{\psi_0(C_0)}\subseteq\overline{\psi_0(\overline{W})},$$
we conclude that $\overline{\psi_0(\overline{W})}=Z$. Therefore $w:={\rm ord}_{\overline{W}}$
is a divisorial valuation on $X$, with center $Z$.

Since $W\subseteq {\rm Cont}^{\geq m-1}(f)$, it follows that $w(f)\geq m-1$. Furthermore, since
$C_0\subseteq W$, we have $C=\overline{C_0}\subseteq\overline{W}$, hence we clearly have
$$w(g)={\rm ord}_{\overline{W}}(g)\geq {\rm ord}_C(g)=v(g)\quad\text{for all}\quad g\in\cO_X(X).$$
Finally, since $C_0$ is a proper closed subset of $W$, we have
$$A_X(w)\leq {\rm codim}(\overline{W})\leq {\rm codim}(\overline{C_0})-1=A_X(v)-1,$$
hence $w$ satisfies all the required conditions. Therefore it is enough to prove the claim.

Note that our assumptions imply that $m-e-2\geq e-1\geq 0$. In order to simplify the notation, we put $\psi=\psi_{m-e-2}\colon
X_{\infty}\to X_{m-e-2}$.
The key point is to describe, for every $\overline{\gamma}\in\psi(C')$, the cylinders
$\psi^{-1}(\overline{\gamma})\cap C_0\subseteq \psi^{-1}(\overline{\gamma})\cap C'$.
This is a local computation, based on Taylor's formula, that goes back to the proof of \cite[Lemma~3.4]{DenefLoeser}.
We choose an algebraic system of coordinates $x_1,\ldots,x_n$ in  a neighborhood of $P=\pi_{m-e-2}(\overline{\gamma})$,
centered at $P$. This gives an isomorphism $\widehat{\cO_{X,P}}\simeq k\llbracket y_1,\ldots,y_n\rrbracket$ that maps each $x_i$ to $y_i$, and
let $\varphi\in k\llbracket y_1,\ldots,y_n\rrbracket$ be the formal power series corresponding to $f$.
As we have seen, the system of coordinates also allows us to identify an arc on $X$ lying over $P$ with an element of $\big(t k\llbracket t\rrbracket\big)^n$.

Let $\gamma\in C'$ be an arc with $\psi(\gamma)=\overline{\gamma}$, which corresponds to $u=(u_1,\ldots,u_n)\in \big(t k\llbracket t\rrbracket\big)^n$,
so that $\gamma^*(f)=\varphi(u_1,\ldots,u_n)$. Any other arc $\delta\in \psi^{-1}(\overline{\gamma})$ corresponds to
$u+v$, for some $v=(v_1,\ldots,v_n)\in \big(t^{m-e-1} k\llbracket t\rrbracket\big)^n$. Applying Lemma~\ref{Taylor_formula} for 
$\varphi$, we get
\begin{equation}\label{eq_thm3_intro}
\delta^*(f)=\varphi(u+v)=\gamma^*(f)+\sum_{i=1}^n\gamma^*\left(\frac{\partial f}{\partial x_i}\right)v_i+\sum_{1\leq i\leq j\leq n}\gamma^*\big(D_{i,j}(f)\big)v_iv_j+\text{higher order terms},
\end{equation}
where $D_{i,j}=\partial_{x_i}\partial_{x_j}$ if $i<j$ and $D_{i,i}=\tfrac{\partial _{x_i}^2}{2}$. 
By assumption, we have ${\rm ord}_t\gamma^*(f)\geq m-1$ and
$$\min_{1\leq i\leq n}{\rm ord}_t\gamma^*\left(\frac{\partial f}{\partial x_i}\right)=e$$
(note that $m-1>e$). Since ${\rm ord}_tv_i\geq m-e-1$, an immediate computation using (\ref{eq_thm3_intro})
shows that for every such $\delta$, we have ${\rm ord}_t\delta^*(f)\geq m-1$. Since $m-e-1\geq e$, we also see that
${\rm ord}_t\delta^*(J_f)\geq e$, hence $\psi^{-1}(\overline{\gamma})\cap C'$ is the open subset of $\psi^{-1}(\overline{\gamma})$ defined by
having contact order with $J_f$ precisely $e$; in particular, this is an irreducible cylinder.

For every series $g\in k\llbracket t\rrbracket$, let us write $g_0$ for the constant term of $g$.
We also write $v_{i,0}$ for the constant term of $v_i/t^{m-e-1}$.
It follows from (\ref{eq_thm3_intro})
that the coefficient of $t^{m-1}$ in $\delta^*(f)$ is equal to
$$\big(\gamma^*(f)/t^{m-1}\big)_0+\sum_{i=1}^n\big(\gamma^*(\partial f/\partial x_i)/t^e\big)_0v_{i,0}+
\sum_{1\leq i\leq j\leq n}\big(\gamma^*(D_{i,j}(f))\big)_0v_{i,0}v_{j,0},$$
and the last term only appears if $m=2e+1$. In any case, since some $\big(\gamma^*(\partial f/\partial x_i)/t^e\big)_0$ is nonzero,
the vanishing of this coefficient defines a hypersurface in the affine space $\A^n$ parametrizing $(v_{1,0},\ldots,v_{n,0})$.
Since $C_0\subseteq {\rm Cont}^{\geq m}(f)$, it follows that $\psi^{-1}(\overline{\gamma})\cap C_0$ is different from $\psi^{-1}(\overline{\gamma})\cap C'$.
The fact that $C_0$ is not an irreducible component of $C'$ follows by taking the image in some $X_q$, with $q\gg 0$
(so that $C_0$ is the inverse image of a locally closed subset of $X_q)$, and applying the following easy lemma.
\end{proof}

\begin{lem}
Let $g\colon U\to V$ be a morphism of algebraic varieties and $W$  a closed subset of $U$
such that for every $y\in g(U)$, the following two conditions hold:
\begin{enumerate}
\item[i)] The fiber $g^{-1}(y)$ is irreducible, and
\item[ii)] The intersection $g^{-1}(y)\cap W$ is different from $g^{-1}(y)$.
\end{enumerate}
Then $W$ is not an irreducible component of $U$.
\end{lem}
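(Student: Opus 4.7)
The plan is to argue by contradiction, using irreducibility of the fibers to funnel each fiber into a single component of $U$, and then exploiting the fact that an irreducible component is never contained in the union of the other components.

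Suppose for contradiction that $W$ is an irreducible component of $U$, and let $W_1,\dots,W_r$ be the remaining irreducible components of $U$. Since $W$ is an irreducible component, it is not contained in $W_1\cup\cdots\cup W_r$, so I may choose a point $u\in W\setminus(W_1\cup\cdots\cup W_r)$. Set $y:=g(u)\in g(U)$ and consider the fiber $F_y:=g^{-1}(y)$.

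The key observation is that
$$F_y=(F_y\cap W)\cup\bigcup_{i=1}^r(F_y\cap W_i)$$
is a finite union of closed subsets of $F_y$. By hypothesis (i), $F_y$ is irreducible, so it must coincide with one of these closed subsets. Hypothesis (ii) rules out $F_y=F_y\cap W$, so there is some index $i$ with $F_y\subseteq W_i$. But then $u\in F_y\subseteq W_i$, contradicting the choice of $u$. Hence $W$ is not an irreducible component of $U$.

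There is no real obstacle here: once the irreducibility of $F_y$ is used to force $F_y$ into one of finitely many components, the proof reduces to the elementary fact that a component is not swallowed by the others. The only thing to double-check is that the finite decomposition $U=W\cup W_1\cup\cdots\cup W_r$ is legitimate, which is automatic for algebraic varieties (noetherian topological spaces).
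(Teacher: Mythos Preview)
Your proof is correct and is essentially the same as the paper's: both argue by contradiction, pick a point of $W$ outside the other components, and use irreducibility of the fiber through that point to force it into a single component, obtaining a contradiction. The only cosmetic difference is the order in which the two cases are eliminated---you first rule out $F_y\subseteq W$ via (ii) and then contradict the choice of $u$, while the paper first rules out $F_y\subseteq W_i$ via the choice of the point and then contradicts (ii).
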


\begin{proof}
Arguing by contradiction, suppose that the irreducible components of $U$ are
$U_1=W,U_2,\ldots,U_r$. If $x\in W\smallsetminus\bigcup_{i\geq 2}U_i$ and $y=g(x)$, then
$g^{-1}(y)\not\subseteq U_i$ for any $i\geq 2$. On the other hand,
$g^{-1}(y)$ is irreducible and contained in $\bigcup_{i\geq 1}\big(U_i\cap g^{-1}(y)\big)$,
hence $g^{-1}(y)\subseteq U_1$. This contradicts condition ii).
\end{proof}

From now on we assume that $k=\C$ (in fact, everything works if ${\rm char}(k)=0$).
We can now deduce the assertion about multiplier ideals:

\begin{proof}[Proof of Theorem~\ref{corC_intro}]
Since $(f)\subseteq (f,J_f^2)$, we clearly have the inclusion
$$\cJ(X, f^{\lambda})\subseteq\cJ\big(X, (f,J_f^2)^{\lambda}\big)$$
for all $\lambda>0$. We now suppose that $\lambda<1$ and prove the reverse inclusion.

We may and will assume that $X$ is affine. Arguing by contradiction, suppose that we have $g\in \cJ\big(X, (f,J_f^2)^{\lambda}\big)$
such that $g\not\in\cJ(X, f^{\lambda})$. By definition of multiplier ideals, the latter condition
implies  that there is a valuation $v={\rm ord}_E$,
where $E$ is a prime divisor on a log resolution of $(X,f)$ such that
\begin{equation}\label{eq_corC}
v(g)+A_X(v)\leq \lambda\cdot v(f).
\end{equation}
We choose such $E$ with the property that $A_X(v)$ is minimal.

If $v(J_f)\geq \frac{1}{2}v(f)$, then $v(f,J_f^2)=\min\big\{v(f), 2\cdot v(J_f)\big\}=v(f)$, and
(\ref{eq_corC}) contradicts the fact that $g\in  \cJ\big(X, (f,J_f^2)^{\lambda}\big)$.
Hence we may and will assume that $v(J_f)<\frac{1}{2}v(f)$. If $v(J_f)=0$, then there is an open subset $U$ of $X$
that intersects the center of $v$ on $X$ and such that $f\vert_U$ defines a smooth hypersurface. In this case, since $\lambda<1$,
we have $\cJ(U, f\vert_U^{\lambda})=\cO_U$. On the other hand, since $U$ intersects the center of $v$, it follows from (\ref{eq_corC})
that $g\vert_U\not\in \cJ(U, f\vert_U^{\lambda})$, a contradiction.

We thus may and will assume that
$$0<v(J_f)<\tfrac{1}{2}v(f).$$
In this case, it follows from Theorem~\ref{thm3_intro} that there is a divisorial valuation $w$ on $X$ that satisfies
properties i), ii), and iii) in the theorem. We thus have
$$A_X(w)\leq A_X(v)-1\leq \lambda\cdot v(f)-1-v(g)\leq \lambda\cdot \big(w(f)+1\big)-1-w(g),$$
where the first inequality follows from condition iii) and the third inequality follows from conditions i) and ii).
Since
$$\lambda\cdot \big(w(f)+1\big)-1-w(g)=\lambda\cdot w(f)-w(g)+\lambda-1\leq \lambda\cdot w(f)-w(g),$$
if we write $w=q\cdot {\rm ord}_F$, for some prime divisor $F$ over $X$, after dividing the above inequalities by $q$,
we obtain
$$A_X({\rm ord}_F)\leq \lambda\cdot {\rm ord}_F(f)-{\rm ord}_F(g)$$
and
$$A_X({\rm ord}_F)\leq A_X(w)\leq A_X(v)-1,$$
contradicting the minimality in the choice of $E$. The contradiction we obtained shows that we have in fact
$$\cJ\big(X, (f,J_f^2)^{\lambda}\big)\subseteq \cJ(X, f^{\lambda}),$$
completing the proof of the first assertion in the theorem.

The proof of the second assertion is similar. We may and will assume that $X$ is affine. Recall that the
adjoint ideal ${\rm adj}(f)$ consists of all $g\in\cO_X(X)$ with the property that for every exceptional divisor $E$ over $X$,
we have
$${\rm ord}_E(g)>{\rm ord}_E(f)-A_X({\rm ord}_E)$$
(see \cite[Chapter~9.3.48]{Lazarsfeld}).
The inclusion
$$
{\rm adj}(f)\subseteq\cJ\big(X, (f,J_f^2)\big)
$$
is easy: if $g\in {\rm adj}(f)$, then for every divisorial valuation ${\rm ord}_E$ on $X$, we have
\begin{equation}\label{eq0_pf_cor}
{\rm ord}_E(g)>{\rm ord}_E(f,J_f^2)-A_X({\rm ord}_E).
\end{equation}
If $E$ is exceptional, this follows from the definition of the adjoint ideal and the fact that ${\rm ord}_E(f)\geq {\rm ord}_E(f,J_f^2)$.
On the other hand, if $E$ is a divisor on $X$, then $A_X({\rm ord}_E)=1$ and ${\rm ord}_E(f,J_f^2)=0$ (the latter equality follows from the fact that
if $f$ vanishes on $E$, since $f$ defines a reduced hypersurface, we have ${\rm ord}_E(J_f)=0$). Since (\ref{eq0_pf_cor})
holds for every $E$, we conclude that $g\in\J\big(X, (f,J_f^2)\big)$.

We now turn to the interesting inclusion
$$\cJ\big(X, (f,J_f^2)\big)\subseteq {\rm adj}(f).$$
Suppose that $g\in \cJ\big(X, (f,J_f^2)\big)$, but $g\not\in {\rm adj}(f)$. The latter condition implies that there is
an exceptional divisor $E$ over $X$ such that
\begin{equation}\label{eq_pf_cor}
{\rm ord}_E(g)\leq {\rm ord}_E(f)-A_X({\rm ord}_E).
\end{equation}
We choose such $E$ with $A_X({\rm ord}_E)$ minimal
and argue as in the proof of the first part.

If ${\rm ord}_E(J_f)\geq\frac{1}{2}{\rm ord}_E(f)$, then ${\rm ord}_E(f)={\rm ord}_E(f,J_f^2)$, and
(\ref{eq_pf_cor}) contradicts the fact that $g\in\cJ\big(X, (f,J_f^2)\big)$.
Hence we may and will assume that ${\rm ord}_E(J_f)<\frac{1}{2}{\rm ord}_E(f)$.
If ${\rm ord}_E(J_f)=0$, then there is an open subset $U$ of $X$ that intersects the center of ${\rm ord}_E$ on $X$
and such that $f\vert_U$ defines a smooth hypersurface. In particular,
we have ${\rm adj}(f)\vert_U=\cO_U$ by \cite[Proposition~9.3.48]{Lazarsfeld}, contradicting (\ref{eq_pf_cor}).

We thus may and will assume that
$$0<{\rm ord}_E(J_f)<{\rm ord}_E(f).$$
We then apply Theorem~\ref{thm3_intro} for $v={\rm ord}_E$
 to find a divisorial valuation $w=q\cdot {\rm ord}_F$ on $X$ that satisfies
properties i)-iv) in the theorem. We obtain
$$w(g)\leq {\rm ord}_E(g)\leq {\rm ord}_E(f)-A_X({\rm ord}_E)\leq w(f)-A_X(w).$$
Dividing by $q$, we obtain
$${\rm ord}_F(g)\leq {\rm ord}_F(f)-A_X({\rm ord}_F).$$
Since
$A_X({\rm ord}_F)\leq A_X(w)\leq A_X(v)-1$ and $F$ is an exceptional divisor
(we use here the fact that $c_X({\rm ord}_F)=c_X({\rm ord}_E)$), this contradicts the minimality in our choice of $E$.
This completes the proof of the theorem.
\end{proof}

Theorem~\ref{corC_intro} easily implies the assertions in Theorem~\ref{thmA_intro}.

\begin{proof}[Second proof of Theorem~\ref{thmA_intro}]
Recall that for a proper nonzero ideal ${\mathfrak a}$, we have
$$\lct({\mathfrak a})=\min\{\lambda>0\mid\cJ(X, {\mathfrak a}^{\lambda})\neq\cO_X\}.$$
Since $\lct(f)\leq 1$, the first assertion in Corollary~\ref{corC_intro} implies that in general, we have
$$\lct(f)=\min\{\lct(f,J_f^2),1\}.$$
Note that if $\lct(f)=1$, then automatically $H$ is reduced. We thus deduce from the second
assertion in Corollary~\ref{corC_intro} that ${\rm lct}(f,J_f^2)>1$ if and only if $H$ is reduced
and ${\rm adj}(f)=\cO_X$. By \cite[Proposition~9.3.48]{Lazarsfeld}, this holds if and only if $H$ has rational singularities.
\end{proof}

We end this section by illustrating how to use Theorem~\ref{thm3_intro} to handle arbitrary ideals $\fra$, with $\lct(\fra)<1$.

\begin{proof}[Proof of Corollary~\ref{corD_intro}]
The inequality $\lct(\fra)\leq\lct\big(\fra+D(\fra)^2\big)$ is clear, hence we only need to prove the reverse inequality. We may and will
assume that $X$ is affine. Let $f$ be a general linear combination, with coefficients in ${\mathbf C}$, of the generators of $\fra$.
In this case, it follows from \cite[Proposition~9.2.28]{Lazarsfeld}) that $\lct(f)=\lct(\fra)<1$. Moreover, it follows from the proof in ${\mathit loc.\,cit.}$
that a divisorial valuation $v$ computes $\lct(f)$ if and only if it computes $\lct(\fra)$.

We next note that for every such $f$, if $v$ computes $\lct(f)$, then $v(J_f)\geq \frac{v(f)}{2}$. Indeed, suppose that we have
$v(J_f)<\frac{v(f)}{2}$. Note that we also have $v(J_f)>0$ (otherwise $c_X(v)$ intersects the smooth locus of the hypersurface defined by $f$,
contradicting the fact that $\lct(f)<1$). We can thus apply Theorem~\ref{thm3_intro} to find a divisorial valuation $w$ on $X$ such that
$A_X(w)\leq A_X(v)-1$ and $w(f)\geq v(f)-1$. In this case we have
$$\frac{A_X(w)}{w(f)}\leq \frac{A_X(v)-1}{v(f)-1}<\frac{A_X(v)}{v(f)}=\lct(f),$$
a contradiction (note that the strict inequality follows from the fact that $\lct(f)<1$ implies $A_X(v)<v(f)$).

We thus conclude that if $v$ is a valuation that computes $\lct(\fra)$, then for every general linear combination $f$ as above, we have
$v(J_f)\geq \frac{v(f)}{2}=\frac{v(\fra)}{2}$. This implies $v\big(D(\fra)\big)\geq \frac{v(\fra)}{2}$.
We thus conclude that
$$\lct\big(\fra+D(\fra)^2\big)\leq \frac{A_X(v)}{v\big(\fra+D(\fra)^2\big)}=\frac{A_X(v)}{v(\fra)}=\lct(\fra),$$
completing the proof of the corollary.
\end{proof}

\section{Two examples}\label{examples}

In this section we discuss two examples of complex hypersurfaces. We begin with the case of the generic determinantal hypersurface,
for which we show that the inequality in Theorem~\ref{thmB_intro} is an equality.

\begin{eg}\label{eg_det}
Let $n\geq 2$ and $X=\A^{n^2}$ be the affine space of $n\times n$ matrices, with coordinates $x_{i,j}$, for $1\leq i,j\leq n$.
We consider $f={\rm det}(A)$, where $A$ is the matrix $(x_{i,j})_{1\leq i,j\leq n}$. In this case,
 it is well-known that the Bernstein-Sato polynomial of $f$ is given by
$$b_f(s)=\prod_{i=1}^n(s+i)$$
(see, for example, \cite[Appendix]{Kimura}). We thus have $\widetilde{\alpha}(f)=2$.

In order to compute $\lct(f,J_f^2)$, we use the arc-theoretic description reviewed in the previous section, together with
the approach in the determinantal case due to Docampo \cite{Docampo}. Note first that $J_f$ is the ideal of $\cO_X(X)$
generated by the $(n-1)$-minors of the matrix $A$. We use the action of $G={\rm GL}_n(\C)\times {\rm GL}_n(\C)$ on
$X$ given by $(g,h)\cdot A=gAh^{-1}$. We have an induced action of $G_{\infty}={\rm GL}_n\big(\C\llbracket t\rrbracket\big)\times
{\rm GL}_n\big(\C\llbracket t\rrbracket\big)$
on $X_{\infty}$ and we consider the orbits with respect to this action. Since the ideal $(f,J_f^2)$ is preserved by the $G$-action, it follows that
every contact locus of this ideal is a union of $G_{\infty}$-orbits.

Recall that a $G_{\infty}$-orbit in $X_{\infty}$ of finite codimension corresponds to a sequence of integers $\lambda_1\geq\ldots,\geq\lambda_n\geq 0$,
such that the orbit consists of those $n\times n$ matrices with entries in $\C\llbracket t\rrbracket$ that are equivalent via Gaussian elimination to the diagonal matrix 
${\rm diag}(t^{\lambda_1},\ldots,t^{\lambda})$,
having on the diagonal $t^{\lambda_1},\ldots,t^{\lambda_n}$. The codimension of this orbit is $\sum_{i=1}^n\lambda_i(2i-1)$ by
\cite[Theorem~C]{Docampo}. Moreover, if $\gamma$ is an arc in this orbit, 
then
${\rm ord}_{\gamma}(f)=\sum_{i=1}^n\lambda_i$ and
$${\rm ord}_{\gamma}(J_f)=\min\{\lambda_1+\ldots+\widehat{\lambda_i}+\ldots+\lambda_n\mid 1\leq i\leq n\}=\lambda_2+\ldots+\lambda_n$$
(this follows by computing the determinant, respectively, the ideal generated by the $(n-1)$-minors of the matrix ${\rm diag}(t^{\lambda_1},\ldots,t^{\lambda_n})$).
Therefore the order of $\gamma$ along the ideal $(f,J_f^2)$ is
$$\min\left\{\sum_{i=1}^n\lambda_i, 2\cdot\sum_{i=2}^n\lambda_i\right\}.$$
The description of the log canonical threshold in terms of contact loci (see \cite{ELM}) thus implies
\begin{equation}\label{eq_eg1}
\lct(f,J_f^2)=\min_{\lambda}\frac{\sum_{i=1}^n\lambda_i(2i-1)}{\min\left\{\sum_{i=1}^n\lambda_i, 2\cdot\sum_{i=2}^n\lambda_i\right\}},
\end{equation}
where the minimum is over all $\lambda=(\lambda_1,\ldots,\lambda_n)$, with $\lambda_1\geq\ldots\geq\lambda_n\geq 0$
and $\lambda_2>0$. If we take $\lambda_1=\lambda_2>0$ and $\lambda_3=\ldots=\lambda_n=0$, then the expression on the right-hand side of
(\ref{eq_eg1}) is $2$. In order to see that for all $\lambda$ this expression is $\geq 2$,  note that since $\lambda_1\geq\lambda_2$, we have
$$\sum_{i=1}^n(2i-1)\lambda_i\geq 4\cdot\sum_{i=2}^n\lambda_i.$$
We thus conclude that $\lct(f,J_f^2)=2=\widetilde{\alpha}(f)$.
\end{eg}

We next turn to the case of homogeneous diagonal hypersurfaces, where we will see that the inequality
in Theorem~\ref{thmB_intro} can be strict.

\begin{eg}\label{eg_diag}
Let $X=\A^n$, with $n\geq 2$, and $f=x_1^d+\ldots+x_n^d$, for some $d\geq 2$. In this case, it is known that the Berstein-Sato polynomial of $f$ is given by
$$b_f(s)=(s+1)\cdot\prod_{1\leq b_1,\ldots,b_n\leq d-1}\left(s+\sum_{i=1}^n\frac{b_i}{d}\right)$$
(see \cite[Proposition~3.6]{Yano}). In particular, we have $\widetilde{\alpha}(f)=\frac{n}{d}$.

We will show that
\begin{equation}\label{eq_eg_diag}
\lct(f,J_f^2)=\min\left\{\frac{n+d-2}{2d-2},\frac{n}{d}\right\}.
\end{equation}
It is clear that we have $J_f=(x_1^{d-1},\ldots,x_n^{d-1})$. Let $\pi\colon Y\to X$ be the blow-up of $X$
at the origin. By symmetry, it is enough to consider the chart $U$
on $Y$ with coordinates $y_1,\ldots,y_n$ such that $x_1=y_1$ and $x_i=y_1y_i$ for $i\geq 2$.
In this chart we have $f\circ\pi\vert_U=y_1^dg$, with $g=1+y_2^d+\ldots+y_n^d$, and $J_f^2\cdot\cO_U=(y_1^{2d-2})$.
We deduce that a log resolution of the ideal $(f,J_f^2)\cdot\cO_U$ can be obtained by blowing-up $U$ along the ideal $(g,y_1^{d-2})$ and then resolving
torically the resulting variety. In particular, we see that in order to compute $\lct(f,J_f^2)$ it is enough to consider toric divisors over $Y$,
with respect to the system of coordinates given by $y_1$ and $g$. If $E$ is such a divisor, with ${\rm ord}_E(g)=a$ and ${\rm ord}_E(y_1)=b$,
with $a$ and $b$ nonnegative integers, not both $0$, we have
$${\rm ord}_E(f,J_f^2)=\min\{db+a, (2d-2)b\}\quad\text{and}\quad A_X({\rm ord}_E)=nb+a.$$
This implies that
$$\lct(f,J_f^2)=\min_{(a,b)}\frac{nb+a}{\min\{db+a,(2d-2)b\}},$$
where the minimum is over all $(a,b)\in\Z_{\geq 0}^2\smallsetminus\{(0,0)\}$.
It is now a straightforward exercise to deduce the formula (\ref{eq_eg_diag}).

If $d=2$, then we see that $\lct(f,J_f^2)=\frac{n}{2}=\widetilde{\alpha}(f)$ (of course, this can be seen directly, since in this case
$(f,J_f^2)=(x_1,\ldots,x_n)^2$). Suppose now that $d\geq 3$. In this case, a straightforward calculation shows that
$$\frac{n+d-2}{2d-2}<\frac{n}{d}\quad\text{if and only if}\quad d<n.$$
 We thus see that if $d\geq n$ (which is precisely the case when the hypersurface defined by $f$ does not have rational
singularities), then $\lct(f,J_f^2)=\frac{n}{d}=\lct(f)$, as expected according to Theorem~\ref{thmA_intro}. On the other hand, if $3\leq d<n$, then
we have a strict inequality $\lct(f,J_f^2)<\widetilde{\alpha}(f)$.
\end{eg}

\section{The motivic oscillation index and $\lct(f,J_f^2)$}\label{moi}

We begin by recalling the definition of the motivic oscillation index from \cite{CMN}.
Let $\Qalg$  be the algebraic closure of $\Q$ inside $\C$.
We consider a nonconstant polynomial $f$ in $\Qalg[x_1,\ldots,x_n]$ and a closed subscheme  $Z$ of $\A_{\Qalg}^n$.

 Let $K$ be a number field  such that $f$ and $Z$ are defined over $K$. Choose an integer $N>0$ such that $f$ and a set of generators for the ideal
 defining $Z$ lie in $\cO[1/N]$, where $\cO$ is the ring of integers of $K$. For any prime $p$ not dividing $N$, any completion $L$  of $K$ above $p$,
and any nontrivial additive character $\psi\colon L\to \C^\times$, consider the following integral
\begin{equation}\label{eq:EfZ0}
E_{f,L,\psi}^{Z} := \int_{\{x\in \cO_{L}^n\mid \overline x\in Z(k_L) \} }  \psi\big(f(x)\big)|dx| ,
\end{equation}
where $\cO_L$ stands for the valuation ring of $L$ with residue field $k_L$,
$\overline x$ stands for the image of $x$ under the natural projection $\cO_{L}^n \to k_L^n$, and $|dx|$ is the Haar measure on $L^n$ normalized so that
$\cO_L^n$ has measure $1$. Writing $q_L$ for the number of elements of $k_L$, let $\sigma_L$ be the supremum over all $\sigma\geq 0$ such that
$$
|E_{f,L,\psi}^{Z}| \leq c q_L^{-m\sigma}
$$
where $m$ is such that $\psi$ is trivial on $\cM_L^m$ and nontrivial on $\cM_L^{m-1}$ with  $\cM_L$ the maximal ideal of $\cO_L$, and where $c=c(\sigma,f,L,Z)$ is independent of $\psi$ and $m$.  Note that $\sigma_L$ can equal $+\infty$; this is the case precisely when
the morphism $\A_{\Qalg}^n\to\A_{\Qalg}^1$ defined by $f$ is smooth in an open neighborhood of $Z$.
We define the $K$-oscillation index of $f$ along $Z$ as
$$
\oi_Z(f) := \lim_{M\to\infty}\inf_{L} \sigma_L,
$$
where the infimum is taken over all non-Archimedean completions $L$ of $K$ above primes $p_L$ with $p_L>M$. 
Finally, the motivic oscillation index of $f$ along $Z$ is defined as
$$
\moi_Z(f) := \inf_{K} \oi_Z(f),
$$
where $K$ runs over all number fields satisfying the above conditions.
This definition corresponds to the definition given in \cite[Section~3.4]{CMN} by Igusa's work (see \cite{Denef}), which relates upper bounds for oscillating integrals with nontrivial poles of local zeta functions.
Note that the variant $\moi(f)$ we considered in the Introduction corresponds to the case when $Z$ is the hypersurface defined by $f$ (note the small change in notation for $\moi(f)$, compared to \cite{CMN}).

The following is the main result of this section.

\begin{thm}\label{thm_moi2}
If $f\in\Qalg[x_1,\ldots,x_n]$ is a nonconstant polynomial and $Z$ is a closed subscheme of the hypersurface defined by $f$, then
\begin{equation}\label{eq:moifJ2}
\moi_Z(f) \geq \lct_{Z}(f,J_f^2).
\end{equation}
In addition, if $\lct_{Z}(f,J_f^2)\leq 1$, then equality holds in (\ref{eq:moifJ2});
in this case, we also have $\oi_Z(f)  = \lct_{Z}(f,J_f^2)$
for every number field $K$ such that $f$ and $Z$ are defined over $K$.
\end{thm}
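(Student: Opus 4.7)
The plan is to mirror the proof of Theorem \ref{thmB_intro}, replacing the minimal exponent $\widetilde{\alpha}$ by the motivic oscillation index $\moi$. Two ingredients will be needed: first, an analog of Proposition \ref{gen_combination} asserting that, for every number field $K$ over which a nonzero ideal $\fra = (f_1, \ldots, f_r) \subseteq \Qalg[x_1, \ldots, x_n]$ is defined and for a generic $K$-linear combination $g = \sum \lambda_i f_i$, one has $\oi_Z(g) \geq \lct_Z(\fra)$; second, an invariance statement: for generic $c_0 \in K^*$ and any $h \in J_f^2$ with coefficients in $K$, the identity $\oi_Z(c_0 f + h) = \oi_Z(f)$ holds. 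Applied to $g = c_0 f + \sum_{i,j} c_{i,j}(\partial_i f)(\partial_j f)$ with generic $c_0 \in K^*$ and $c_{i,j} \in K$, the first ingredient gives $\moi_Z(g) \geq \lct_Z(f, J_f^2)$ because $f$ together with the $(\partial_i f)(\partial_j f)$ generate $(f) + J_f^2$, while the second gives $\moi_Z(f) = \moi_Z(g)$; combining these yields (\ref{eq:moifJ2}).

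For the first ingredient, I would apply Igusa's theorem (relating the $L$-adic oscillatory integral $E^Z_{g,L,\psi}$ to the poles of the local zeta function of $g$) on a log resolution $\pi \colon Y \to X$ of the ideal $\fra$. Genericity of $g$ forces $\pi^* \divisor(g) = F + E$, where $\fra \cdot \cO_Y = \cO_Y(-E)$, and $F$ is smooth with SNC with $E$. The poles of the local zeta function of $g$ come from divisors among $E \cup F$: from an $E$-component $E_i$ (with log discrepancy $k_i+1$) one obtains a pole at $s = -(k_i+1)/a_i \leq -\lct_Z(\fra)$, and from $F$ one obtains a pole at $s = -1$. Poles with real part exceeding $-\lct_Z(\fra)$ therefore lie at $s = -1$ and come from $F$; since $g$ is a local coordinate on $Y$ along $F$, the corresponding contribution to the oscillatory integral factors through a Gauss sum on the residue field of $L$, which vanishes (the standard Igusa--Denef cancellation for smooth transverse divisors). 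Consequently only the $E$-components control the asymptotics of $E^Z_{g,L,\psi}$, giving $\oi_Z(g) \geq \lct_Z(\fra)$ for every admissible $K$ and every completion $L$ of residue characteristic outside a finite exceptional set.

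For the second ingredient, at each closed point $P$ of the zero-locus of $f$, Propositions \ref{prop1_formal_equiv} and \ref{prop2_formal_equiv} produce a formal automorphism $\varphi$ of $\widehat{\cO_{X,P}}$ sending $f$ to $c_0^{-1}(c_0 f + h)$. The construction of $\varphi$ in Section \ref{formal_equivalence} is algorithmic: it uses only polynomial operations together with the inversion of $2$ and of finitely many pivot entries in $K^*$. The coefficients of $\varphi$ therefore lie in $\cO_K[1/N]$ for some fixed nonzero $N \in \cO_K$, so for every completion $L$ of $K$ of residue characteristic coprime to $N$, the series defining $\varphi$ converges on the $L$-adic residue disk around the reduction of $P$ to a measure-preserving change of variables whose Jacobian is an $L$-adic unit. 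Decomposing $E^Z_{f,L,\psi}$ as a sum over residue disks above points of $Z(k_L)$ and applying $\varphi$ term-by-term converts it into $E^Z_{c_0^{-1}(c_0 f + h), L, \psi}$; since $c_0 \in K^*$ is an $L$-adic unit for almost all $L$, scaling by $c_0$ leaves $|E|$ unchanged, yielding $\oi_Z(f) = \oi_Z(g)$ for every admissible $K$, and hence $\moi_Z(f) = \moi_Z(g)$.

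For the equality when $\lct_Z(f, J_f^2) \leq 1$, I would invoke the $Z$-local version of Theorem \ref{thmA_intro}, giving $\lct_Z(f, J_f^2) = \lct_Z(f) \leq 1$, together with \cite[Proposition~3.10]{CMN}, which gives $\lct_Z(f) = \min\{\moi_Z(f), 1\}$ and characterizes $\moi_Z(f) > 1$ by rational singularities of the hypersurface along $Z$. These together force $\moi_Z(f) = \lct_Z(f) = \lct_Z(f, J_f^2)$. The sharper statement $\oi_Z(f) = \lct_Z(f, J_f^2)$ for each admissible $K$ follows from the converse direction of Igusa's theorem: for every $K$ and infinitely many primes $p$ one can choose a completion $L$ above $p$ and a character $\psi$ for which the leading residue of the local zeta function of $f$ at its dominant pole is nonzero, yielding $|E^Z_{f,L,\psi}| \sim q_L^{-m \lct_Z(f)}$ and hence $\sigma_L \leq \lct_Z(f)$. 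The main obstacle is the uniform $L$-adic convergence of $\varphi$ in the second ingredient: one must track the denominators in the iterative Morsification and Tougeron constructions of Section \ref{formal_equivalence} closely enough to guarantee convergence uniformly across all closed points of $Z$ (including those defined over finite extensions of $K$, requiring passage to unramified extensions $L'/L$) and across all places $L$ outside a single finite exceptional set, while also verifying that the oscillatory-integral bound of the first ingredient and the change-of-variables of the second both descend correctly under base change.
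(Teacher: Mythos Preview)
Your approach is genuinely different from the paper's, and the second ingredient carries a real gap that you yourself flag but do not close. The paper does not pass through formal equivalence or through a generic element of $(f,J_f^2)$ at all. Instead it proves a direct $p$-adic stationary-phase bound (Proposition~\ref{prop:E^ZfJf}): starting from Igusa's identity
\[
E^Z_{f,L,\psi}=\int_{\{\ord f(x)\ge m-1\}}\psi\big(f(x)\big)\,|dx|,
\]
one Taylor-expands $f$ on each half-box $x_0+(\cM_L^{\overline m})^n$ with $\overline m=\lceil m/2\rceil$ and uses orthogonality of characters to see that the integral over such a box already vanishes whenever $\ord J_f^2(x_0)<m-1$. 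Thus $|E^Z_{f,L,\psi}|$ is bounded by the volume of $\{x:\ord(f,J_f^2)(x)\ge m-1\}$, and the link between such volumes and $\lct_Z(f,J_f^2)$ (from \cite{VeysZ}) finishes the inequality. The ideal $(f,J_f^2)$ appears here for a completely transparent analytic reason---the square comes from expanding around the midpoint of the box---and uniformity in $L$ and in the centre point is automatic.

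Your second ingredient, by contrast, asks for a $p$-adic analytic automorphism of each residue disk realising the formal equivalence of Section~\ref{formal_equivalence}, uniformly over all $\bar P\in Z(k_L)$ and all $L$ outside a fixed finite set. The Tougeron and Morsification constructions are infinite iterations carried out over a field; to run them over $\cO_L\llbracket y\rrbracket$ you must, at each step, lift the Nakayama identity $J_{F_j}=J_f$ to an integral statement without introducing denominators, and you must also arrange that the rank condition of Proposition~\ref{prop2_formal_equiv} holds simultaneously at every multiplicity-$2$ point of $Z$ for a \emph{single} choice of $c_0,c_{i,j}\in K$. None of this is visibly false, but it is a substantial $p$-adic uniformity statement that is not proved here, and it is far heavier than the paper's two-line Taylor-plus-orthogonality computation, which bypasses both of your ingredients at once. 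Your treatment of the equality case, via Theorem~\ref{thmA_intro} and \cite[Proposition~3.10]{CMN}, agrees with the paper's.
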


The proof of Theorem \ref{thm_moi2} is based on Proposition \ref{prop:E^ZfJf} below. The strong monodromy conjecture states that the real parts of the poles of non-archimedean local zeta functions are zeros of the Bernstein-Sato polynomial, see \cite[(2.3.1)]{Denef}.
From Proposition \ref{prop:E^ZfJf} we deduce furthermore Igusa's strong monodromy conjecture in the range strictly between $-\lct(f,J_f^2)$ and $0$, as follows.
\begin{cor}\label{cor:monodromy}
Let $f\in K[x_1,\ldots,x_n]$ be a nonconstant polynomial, with $K\ge \Q$ a number field. Let $L$ be a non-archimedean completion of $K$, $\chi:\cO_L^\times\to\C^\times$ a group homomorphism, and consider the local zeta function defined for $s>0$ by
$$
P_{f,L,\chi}(t) :=  \int_{x\in \cO_L^n}\chi(f(x)) |f(x)|^s |dx|,
$$
with $t=p^{-s}$. Note that  $P_{f,L,\chi}(t)$ is a rational function in $t$ by Igusa's work.  We suppose that the residue field characteristic of $L$ is large (depending on $f$). Then the strong monodromy conjecture holds for $f$ for values strictly between $-\lct(f,J_f^2)$ and $0$. In detail, let $t_0$ be the real part of a pole of $P_{f,L,\chi}(t)$. Suppose that  $t_0>- \lct(f,J_f^2)$. Then $t_0=-1$ and $t_0$ is a zero of the Bernstein-Sato polynomial of $f$.
\end{cor}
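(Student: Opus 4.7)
The plan is to derive Corollary~\ref{cor:monodromy} by combining the exponential-integral bound from Proposition~\ref{prop:E^ZfJf} (the engine behind Theorem~\ref{thm_moi2}) with Igusa's classical asymptotic expansion expressing $E_{f,L,\psi}^{\A^n}$ in terms of the poles of the local zeta functions $P_{f,L,\chi}$; see \cite{Denef}. The hypothesis that the residue characteristic of $L$ be large is precisely what makes Igusa's expansion available in the clean form needed.

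First, I would record Igusa's expansion: for an additive character $\psi$ of $L$ of conductor $m \geq 1$,
\[
E_{f,L,\psi}^{\A^n} \;=\; \sum_{\chi}\sum_{s_0}\, c_{\chi,s_0}(\psi)\, m^{k_{\chi,s_0}}\, q_L^{m\,\operatorname{Re}(s_0)},
\]
where $\chi$ runs over multiplicative characters of $\cO_L^{\times}$ of bounded conductor, $s_0$ runs over the poles in $s$ of $P_{f,L,\chi}(q_L^{-s})$, the coefficient $c_{\chi,s_0}(\psi)$ is a Gauss sum attached to $(\psi,\chi)$ times the residue of $P_{f,L,\chi}$ at $s_0$, and $k_{\chi,s_0}$ records the order of the pole. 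Second, I would invoke Proposition~\ref{prop:E^ZfJf}, choosing the auxiliary subscheme $Z$ so as to suppress the ``tautological'' pole at $s_0=-1$ (for example, the singular locus of $\{f=0\}$, or the whole hypersurface combined with the standard stationary-phase reduction of the integral from $\A^n$ to a neighborhood of $\{f=0\}$), to obtain a uniform upper bound
\[
|E_{f,L,\psi}^{\A^n}| \;\leq\; C\, q_L^{-m\,\lct(f,J_f^2)} \;+\; (\text{explicit contribution of the pole at }s_0=-1),
\]
valid for $p_L$ sufficiently large in terms of $f$ and all $\psi$ of conductor $m$.

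Third, I would argue by contradiction. Suppose $P_{f,L,\chi_0}(q_L^{-s})$ has a pole $s_0$ with $\operatorname{Re}(s_0)=t_0$ satisfying $t_0>-\lct(f,J_f^2)$ and $t_0\ne -1$. Grouping in the expansion all pairs $(\chi,s_0')$ with $\operatorname{Re}(s_0')=t_0$ produces a leading contribution of order $m^{k}\,q_L^{m t_0}$, which, since $t_0>-\lct(f,J_f^2)$ and $t_0\ne -1$, is asymptotically larger than both terms on the right-hand side as $m\to\infty$. For the bound to persist, this grouped subsum must cancel to leading order, uniformly in conductor-$m$ characters $\psi$. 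Since $\psi\mapsto c_{\chi,s_0}(\psi)$ depends on $\psi$ only through a Gauss sum, and as $\psi$ varies these Gauss sums separate the multiplicative characters $\chi$, the uniform cancellation forces each residue of every $P_{f,L,\chi}$ at every pole with real part $t_0$ to vanish, contradicting the existence of the assumed pole. Therefore any such $t_0$ equals $-1$, and by the classical factorization $(s+1)\mid b_f(s)$ this value is automatically a zero of the Bernstein--Sato polynomial.

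The main obstacle lies in the penultimate step: formally extracting, from Igusa's Gauss-sum structure and the bound of Proposition~\ref{prop:E^ZfJf}, that the residue of $P_{f,L,\chi}$ at every pole with real part strictly between $-\lct(f,J_f^2)$ and $0$ must vanish unless the pole sits at $-1$. On the analytic side, the pole at $s_0=-1$ must be handled separately, because it is the ``universal'' pole coming from the factor $(s+1)$ of $b_f(s)$ and its residue encodes geometric integration over $\{f=0\}$; the choice of $Z$ in Proposition~\ref{prop:E^ZfJf} is what tames this contribution. On the arithmetic side, one must verify that as $\psi$ ranges over conductor-$m$ additive characters, the Gauss-sum coefficients in Igusa's formula are linearly independent enough to force each individual residue (at a pole other than $-1$) to vanish; this refines the character-orthogonality argument in \cite{Denef}, and the large residue-characteristic hypothesis is what excludes degenerate coincidences.
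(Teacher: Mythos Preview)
Your overall architecture---bound $E^{\,\cdot}_{f,L,\psi}$ via Proposition~\ref{prop:E^ZfJf} and then feed this into Igusa's relationship between the asymptotics of exponential integrals and the poles of $P_{f,L,\chi}$---is exactly what the paper does. But the paper's execution is much shorter, and your step~2 contains a genuine misstep that it avoids.

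The paper simply takes $Z=\A^n$ in Proposition~\ref{prop:E^ZfJf}. This gives directly
\[
|E^{\A^n}_{f,L,\psi}|<c\,q_L^{-m(\lct(f,J_f^2)-\varepsilon)}
\]
for all nontrivial $\psi$. It then invokes, as a black box, Igusa's converse result (Corollary~1.4.5 of \cite{Denef}, supplemented by the remark at the end of \cite{DenefVeys}): a uniform bound $|E^{\A^n}_{f,L,\psi}|\le c\,q_L^{-m\sigma}$ forces every pole of every $P_{f,L,\chi}$ to have real part $\le -\sigma$ \emph{or} to equal $-1$. Letting $\varepsilon\to 0$ gives $t_0=-1$, and $-1$ is always a root of $b_f(s)$. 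That is the entire proof.

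Your detour---choosing $Z$ to be the singular locus or the hypersurface ``so as to suppress the tautological pole at $s_0=-1$''---does not do what you want. Igusa's asymptotic expansion links the poles of the \emph{full} zeta function $P_{f,L,\chi}$ to $E^{\A^n}_{f,L,\psi}$; for a proper $Z$, the integral $E^{Z}_{f,L,\psi}$ is governed instead by a \emph{localized} zeta function (integration restricted to $\{\overline{x}\in Z(k_L)\}$), whose poles are not the object of the corollary. So shrinking $Z$ does not isolate or remove the pole of $P_{f,L,\chi}$ at $-1$; it changes which zeta function you are talking about. The special status of the pole at $-1$ is handled not by the choice of $Z$ but inside Igusa's formula itself: the coefficient attached to a simple pole at $s_0=-1$ (trivial $\chi$) drops out of the expansion for conductor $m\ge 2$, which is precisely the content of the remark in \cite{DenefVeys} that the paper cites. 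Once you take $Z=\A^n$, there is no extra ``explicit contribution of the pole at $s_0=-1$'' term to carry around.

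Your step~3 (the Gauss-sum separation argument forcing individual residues to vanish) is essentially a sketch of the proof of Corollary~1.4.5 in \cite{Denef}; it is correct in outline, but there is no need to redo it here. In summary: drop the special choice of $Z$, take $Z=\A^n$, and cite \cite{Denef} and \cite{DenefVeys} for the passage from the exponential-sum bound to the pole restriction. The ``main obstacle'' you flag then disappears.
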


\begin{rmk}
Note that the condition in Corollary \ref{cor:monodromy} on the residue field characteristic of $L$ is removed in Section 4 of \cite{Nguyen:n/2d}.  
\end{rmk}

We recall that for every nonzero ideal ${\mathfrak a}$ in $\Qalg[x_1,\ldots,x_n]$, by definition
$$\lct_Z({\mathfrak a})=\max_U\lct({\mathfrak a}\vert_U),$$
where the maximum is over all open neighborhoods $U$ of $Z$. We also note that if ${\mathfrak a}_{\C}$
is the extension of ${\mathfrak a}$ to $\C[x_1,\ldots,x_n]$, then $\lct({\mathfrak a})=\lct({\mathfrak a}_{\C})$.
We will derive Theorem~\ref{thm_moi2} from the following proposition and the definition of the oscillation index.

\begin{prop}\label{prop:E^ZfJf}
Let $K$ be any number field such that $f$ and $Z$ are defined over $K$ and let
$L$ be a non-Archimedean completion of $K$ above a prime $p_L$, with residue field $k_L$ with $q_L$ elements.
If $p_L$ is large enough (in terms of the data $f,Z,K$), then for every $\varepsilon>0$, there exists a constant $c = c(f,Z,L,\varepsilon)$ such that for each nontrivial additive character $\psi\colon L\to\C^\times$ we have
\begin{equation}\label{eq:EZ}
|E^{Z}_{f,L,\psi}| <  c q_L^{- m\sigma }\quad\text{for}\quad
\sigma =  \lct_Z(f,J_f^2) - \varepsilon,
\end{equation}
where $m=m(\psi)$ is such that $\psi$ is trivial on $\cM_L^m$ and nontrivial on $\cM_L^{m-1}$.
\end{prop}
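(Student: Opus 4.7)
The approach combines a simultaneous log resolution of $(f)$ and $J_f$ with $p$-adic stationary phase, following Igusa's philosophy but refined to exploit the Jacobian term. Over a finite extension of $K$, take a log resolution $\pi\colon Y\to X=\A^n$ that simultaneously resolves $(f)$ and $J_f$ (and hence also $(f,J_f^2)$). On each chart of $Y$ with SNC coordinates $y_1,\ldots,y_n$ we have $f\circ\pi=u\prod_i y_i^{a_i}$ with $u$ a unit, $J_f\cdot\cO_Y=(\prod_i y_i^{b_i})$, and $K_{Y/X}=\sum_i k_iE_i$, so that $(f,J_f^2)\cdot\cO_Y$ has exponents $c_i=\min(a_i,2b_i)$ and
\[
\lct_Z(f,J_f^2)=\min_i\frac{k_i+1}{c_i}
\]
over those $E_i$ meeting $\pi^{-1}(Z)$. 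For $p_L$ sufficiently large (depending only on $\pi$), the resolution has good reduction over $\cO_L$, so the $p$-adic change-of-variables formula expresses $E^Z_{f,L,\psi}$ as a finite sum of chart integrals.

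On each chart, I would decompose the integral by valuation sector: for $\nu\in\Z_{\geq 0}^n$, on $\{v(y_i)=\nu_i\}$ substitute $y_i=\pi_L^{\nu_i}z_i$ with $z_i\in\cO_L^\times$, obtaining the contribution $q_L^{-\sum_i(k_i+1)\nu_i}\cdot I(\nu)$ with
\[
I(\nu)=\int_{(\cO_L^\times)^n}\psi\bigl(\pi_L^{A(\nu)}\tilde u(z)\bigr)\,|u'|\,dz,\qquad A(\nu)=\sum_i a_i\nu_i.
\]
The crux is a refined $p$-adic stationary-phase bound $|I(\nu)|\leq Cq_L^{-(m-C(\nu))_+/N}$, where $C(\nu)=\sum_i c_i\nu_i=\min(A(\nu),2B(\nu))$ with $B(\nu)=\sum_i b_i\nu_i$ and $N$ a fixed positive integer. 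When $2B(\nu)\geq A(\nu)$ this follows from the classical linear stationary phase using the first-order term of $\tilde u$ in $z$, whose smallness is governed by $J_f\circ\pi$. When $2B(\nu)<A(\nu)$ (the sector where $J_f$ is small so $f$ is ``more singular''), the linear estimate only yields $q_L^{-(m-A(\nu))_+}$, and one must extract additional cancellation from the \emph{quadratic} behaviour of the phase---this is where the ideal $J_f^2$ enters. Summing $q_L^{-\sum_i(k_i+1)\nu_i}\cdot|I(\nu)|$ over $\nu$ and optimising the geometric-sum estimate at each divisor yields the desired bound $Cq_L^{-m(\lct_Z(f,J_f^2)-\varepsilon)}$, with the $\varepsilon$-loss absorbing the boundary terms in the geometric sums.

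The main obstacle is the refined stationary phase in the sector $2B(\nu)<A(\nu)$. The plan is to leverage the formal-equivalence results of Section~\ref{formal_equivalence}: since adding an element of $J_f^2$ to $f$ does not change it up to formal automorphism at singular points, one may arrange a local change of variables converting the phase $\pi_L^{A(\nu)}\tilde u(z)$ into a model of the form $\pi_L^{2B(\nu)}Q(z)+\pi_L^{A(\nu)}R(z)$ with $Q$ a non-degenerate quadratic form, producing the needed $q_L^{-(m-2B(\nu))_+}$ decay via $p$-adic Gauss-sum estimates. Executing this over $\cO_L$ (rather than in a formal power series ring) and interfacing it with the sector decomposition is the technical heart of the argument; it may also require a further toric blow-up of $Y$ to linearise the remaining quadratic piece, and the largeness hypothesis on $p_L$ is needed to justify both the good reduction of this secondary blow-up and the convergence of the quadratic Gauss sums.
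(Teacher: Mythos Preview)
Your route via a simultaneous log resolution is far more elaborate than what the paper does, and the step you flag as the ``crux'' contains a real gap.

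The paper's proof works entirely on $\A^n$, with no resolution. The single observation is that the integral $E^Z_{f,L,\psi}$, which by Igusa's classical argument is already supported on $\{\ord f(x)\geq m-1\}$, is in fact supported on the smaller set $\{\ord(f(x),J_f^2(x))\geq m-1\}$. This is proved by linear Taylor expansion on balls of radius $\cM_L^{\lceil m/2\rceil}$: if $\ord J_f(x_0)<(m-1)/2$ then some $\partial_i f(x_0)$ has order $<(m-1)/2$, and on $x_0+(\cM_L^{\lceil m/2\rceil})^n$ the phase is congruent modulo $\cM_L^m$ to an affine function with a nontrivial linear term, so orthogonality of characters gives \emph{exact} vanishing of the sub-integral. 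Once this is known, one bounds $|E^Z_{f,L,\psi}|$ trivially by the volume of the contact locus $\{\ord(f,J_f^2)\geq m-1\}$ and invokes the standard volume--lct estimate (Veys--Z\'u\~niga-Galindo) to finish. The squaring of $J_f$ arises for an elementary reason: the ball radius must be $\approx m/2$ to kill the quadratic and higher Taylor terms, so the linear coefficient must have order $<m/2$ to survive.

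Your plan instead transfers to a resolution and seeks a sector-by-sector bound $|I(\nu)|\lesssim q_L^{-(m-C(\nu))_+}$ with $C(\nu)=\min(A(\nu),2B(\nu))$. In the sector $2B(\nu)<A(\nu)$ you propose to extract ``quadratic'' cancellation via the formal-equivalence results of Section~\ref{formal_equivalence}. But those results say only that $f\sim f+h$ for $h\in J_f^2$; they do not produce a decomposition of the phase as $\pi_L^{2B(\nu)}Q(z)+\pi_L^{A(\nu)}R(z)$ with $Q$ a non-degenerate quadratic form, and there is no reason such a decomposition should exist on a resolution chart. More basically, after pulling back to $Y$ the phase is $\pi_L^{A(\nu)}\tilde u(z)$ with $\tilde u$ a unit, and the information encoded in $J_f\cdot\cO_Y=(\prod y_i^{b_i})$ concerns the $x$-partials of $f$ pulled back, not the $z$-partials of the phase; translating between the two through the Jacobian of $\pi$ is exactly what would make your refined stationary-phase bound non-obvious, and your sketch does not address it. You have also inverted the heuristic: when $2B(\nu)<A(\nu)$ the Jacobian vanishes to \emph{low} order relative to $f$, so the derivative is \emph{large} and linear stationary phase is the correct tool---this is precisely what the paper exploits, directly on $\A^n$, without any resolution or quadratic Gauss sums.
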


\begin{proof}[Proof of Proposition \ref{prop:E^ZfJf}]
By Igusa's results from \cite{Igusa}, \cite{Igusa3} and \cite{Denef}, recalled in \cite[Propositions~3.1 and 3.4]{CMN}, we have for each $\psi$ with $m=m(\psi)>1$
\begin{equation}\label{eq:EfZ1}
E_{f,L,\psi}^{Z} = \int_{\{x\in \cO_{L}^n\mid \overline x\in Z(k_L),\ \ord f(x)\geq m-1 \} }  \psi\big(f(x)\big)|dx| ,
\end{equation}
where $\overline x$ stands for the image of $x$ under the natural projection $\cO_{L}^n \to k_L^n$, and where $p_L$ is assumed to be large. We now show
that we also have
\begin{equation}\label{eq:EfZJ}
E_{f,L,\psi}^{Z}= \int_{ \{x\in \cO_{L}^n\mid \overline x\in Z(k_L),\ \ord(f(x),J_f^2(x))\geq m-1 \} }  \psi\big(f(x)\big)|dx|,
\end{equation}
by adapting the proof of \cite[Proposition~2.1]{CHerr}. Here, the condition $\ord\big((f(x),J_f^2(x)\big)\geq m-1$ for $x\in \cO_L^n$ means that we have $\ord\big(g(x)\big)\geq m-1$ for every polynomial $g$ in the ideal of the polynomial ring over $\cO_L$ generated by $f$ and $J_f^2$.  We
 use the orthogonality of characters in the form
\begin{equation}\label{eq:orthchar}
\int_{z \in \cM_L^{m-1}} \psi(z)|dz| = 0
\end{equation}
for $m=m(\psi)$ and combine this with the Taylor expansion, as follows. Let $x_0$ be a point in $\cO_L^m$ such that $\ord f(x_0)\geq m-1$ and suppose that
$\ord\big(J_f^2(x_0)\big) < m-1$. In order to prove (\ref{eq:EfZJ}), it is enough to show that for every such $x_0$, we have
\begin{equation}\label{eq:orth}
\int_{x\in x_0+(\cM_L^{\overline m})^n} \psi\big(f(x)\big) |dx| = 0,
\end{equation}
with $\overline m$ equal to $m/2$ if $m$ is even, and equal to $(m+1)/2$ if $m$ is odd.
If $x=y+x_0$, with $y=(y_1,\ldots,y_n)\in (\cM_L^{\overline m})^n$, then we write the Taylor expansion of $f$ around $x_0$:
$$
f(x) =  f(y+x_0) =   a_0 + \sum_{i=1}^n a_i y_i + \mbox{higher order terms in } y.
$$
Since $p_L$ is assumed to be large and $y\in (\cM_L^{\overline m})^n$, we see  that
$$
f(x) \equiv a_0 + \sum_{i=1}^n a_i y_i \quad (\bmod \cM_L^m)
$$
and thus, since $\psi$ is trivial on $\cM_L^m$, we obtain
\begin{equation}\label{eq:linear}
\int_{x\in x_0+(\cM_L^{\overline m})^n} \psi\big(f(x)\big) |dx| = \int_{y\in (\cM_L^{\overline m})^n} \psi \left(a_0 + \sum_{i=1}^n a_i y_i\right) |dy|.
\end{equation}
Note that the condition $\ord\big(J_f^2(x_0)\big) <m-1$ implies that $\min_i \ord a_i < (m-1)/2$. Since $(m-1)/2 + {\overline m} \leq m$, using the orthogonality relation (\ref{eq:orthchar}), we deduce (\ref{eq:orth}) from (\ref{eq:linear}). This completes the proof of (\ref{eq:EfZJ}).

By (\ref{eq:EfZJ}), using the fact that $| \psi(x) |=1$ for all $x$ in $L$, we get
$$
|E_{f,L,\psi}^{Z} | \leq \Vol\big( \{ x\in \cO_{L}^n\mid \overline x \in Z(k_L),\ \ord(f(x),J_f^2(x))\geq m-1 \}\big)  ,
$$
where the volume is taken with respect to the Haar measure $|dx|$ on $L^n$.
Therefore the existence of $c$ as desired follows from Corollary 2.9 of \cite{VeysZ} and the two sentences following that corollary, which give a link between the log canonical threshold at $Z$ of any ideal ${\mathfrak a}$ of $\cO_L[x]$ and the volume of
 $\{ x\in \cO_{L}^n\mid \overline x\in Z(k_L),\ \ord\big({\mathfrak a}(x)\big)\geq m \}$ uniformly in
 $m>1$.
\end{proof}

\begin{proof}[Proof of Theorem \ref{thm_moi2}]
The formula (\ref{eq:moifJ2}) follows from Proposition \ref{prop:E^ZfJf}
and the definition of $\moi_Z(f)$. Moreover, if $\lct_{Z}(f,J_f^2)\leq 1$, then
part ii) of Theorem~\ref{thmA_intro} 
gives that the hypersurface defined by $f$ does not have
rational singularities in any open neighborhood of $Z$.
Under this last condition, it follows from
\cite[Proposition~3.10]{CMN} that $\moi_Z(f) =\lct_Z(f)$, which together with (\ref{eq:moifJ2})  and $\lct_Z(f) \leq \lct_{Z}(f,J_f^2)$ implies $\moi_Z(f)  = \lct_{Z}(f,J_f^2) = \lct_Z(f)$.   The fact that
when $\lct_{Z}(f,J_f^2)\leq 1$ we also have
$\oi_Z(f) =\lct_Z(f)$  follows by slightly adapting the proof of \cite[Proposition~3.10]{CMN}. 
\end{proof}

\begin{proof}[Proof of Corollary \ref{cor:monodromy}]
Let $L$ and $t_0$ be as in the Corollary. Suppose that the residue field characteristic of $L$ is large enough so that (\ref{eq:EZ}) of Proposition \ref{prop:E^ZfJf} holds with $Z=\A^n$.
By (\ref{eq:EZ}) for this $Z$ and $L$ and Igusa's result from \cite{Igusa3} (in the form of  Corollary 1.4.5  of \cite{Denef} together with the comment at the end of \cite{DenefVeys}), it follows that $t_0$ is equal to $-1$. 
The value $-1$ is automatically a zero of the Bernstein-Sato polynomial of $f$, and hence the corollary is proved.
\end{proof}

\section{Lct-maximal ideals}\label{open_questions}

Let $X$ be a smooth, irreducible, complex algebraic variety of dimension $n$. Recall that all ideals of $\cO_X$ we consider are coherent.

\begin{defi}
A proper nonzero ideal $\fra$ on $X$ is \emph{lct-maximal} if for every ideal $\frb$, with $\fra\subsetneq\frb$, we have
$\lct(\fra)<\lct(\frb)$.
\end{defi}

\begin{rmk}
Because of Noetherianity, for every proper nonzero ideal $\fra$ of $\cO_X$, there is an ideal $\fra'\supseteq \fra$ which is lct-maximal and such that
$\lct(\fra)=\lct(\fra')$.
\end{rmk}

\begin{rmk}
It would be interesting to have a procedure that for a proper nonzero ideal $\fra$ would give an lct-maximal ideal $\fra'$ containing $\fra$ and such that
$\lct(\fra)=\lct(\fra')$. For example, Corollary~\ref{corD_intro} gives that if $\lct(\fra)<1$, then $\lct(\fra)=\lct\big(\fra+D(\fra)^2\big)$. However, we can't iterate this:
the map that takes $\fra$ to $\fra+D(\fra)^2$ is a closure operation (this follows from the fact that $D\big(\fra+D(\fra)^2\big)=D(\fra)$). Note also that
sometimes $\fra+D(\fra)^2$ is not lct-maximal: for example, it follows from Theorem~\ref{thm_char2} that this is the case if $\fra=(f)$ is an ideal defining a hypersurface whose
singular locus (with the reduced scheme structure) doesn't have rational singularities.
\end{rmk}

If $v$ is a divisorial valuation on $X$, then for every positive integer $m$, we consider the coherent ideal $\fra_m(v)=\{f\in\cO_X\mid v(f)\geq m\}$.
Note that the support of the closed subscheme defined by $\fra_m(v)$ is the center of $v$ on $X$.

\begin{defi}
We say that $v$ \emph{computes a log canonical threshold} if it computes $\lct(\fra)$ for some proper nonzero ideal $\fra$.
\end{defi}


\begin{prop}\label{prop_char1}
A proper nonzero ideal
$\fra$ of $\cO_X$ is lct-maximal if and only if for every divisorial valuation $v={\rm ord}_E$ that computes $\lct(\fra)$, we have
$\fra=\fra_m(v)$, where $m=v(\fra)$.
\end{prop}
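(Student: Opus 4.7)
The plan is to unwind the definitions using two elementary observations: for any divisorial valuation $v$ with $m := v(\fra) > 0$ we have $\fra \subseteq \fra_m(v)$, and under a mild attainedness condition, $v(\fra_m(v))$ equals $m$.

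First I would record the setup precisely. Since $v$ takes integer values on $\cO_X$ and the minimum $v(\fra) = m$ is attained on some section of $\fra$, that section also lies in $\fra_m(v)$, so $v(\fra_m(v)) = m$. Hence $\lct(\fra_m(v)) \leq A_X(v)/m$. Combining this with the monotonicity $\lct(\fra) \leq \lct(\fra_m(v))$ coming from the inclusion $\fra \subseteq \fra_m(v)$, we conclude: whenever $v$ computes $\lct(\fra)$, that is, $\lct(\fra) = A_X(v)/m$, it also computes $\lct(\fra_m(v))$, and
\[
\lct(\fra) = \lct(\fra_m(v)).
\]

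For the "only if" direction, assume $\fra$ is lct-maximal and $v$ computes $\lct(\fra)$. The previous observation gives $\fra \subseteq \fra_m(v)$ with equal log canonical thresholds, so lct-maximality forces $\fra = \fra_m(v)$. For the "if" direction, assume the stated valuation condition and suppose by contradiction that there exists $\frb$ with $\fra \subsetneq \frb$ and $\lct(\fra) = \lct(\frb)$. Since $\lct(\fra)$ is finite while $\lct(\cO_X) = \infty$, the ideal $\frb$ must be proper, so over $\C$ there is a divisorial valuation $v$ computing $\lct(\frb)$. The chain
\[
\lct(\fra) \leq \frac{A_X(v)}{v(\fra)} \leq \frac{A_X(v)}{v(\frb)} = \lct(\frb) = \lct(\fra)
\]
forces all inequalities to be equalities, so $v$ also computes $\lct(\fra)$ and $v(\fra) = v(\frb) =: m$. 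By hypothesis $\fra = \fra_m(v)$, but $\frb \subseteq \fra_m(v) = \fra$, contradicting $\fra \subsetneq \frb$.

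I do not anticipate any serious obstacle: the proof is a direct manipulation of the defining formula of $\lct$, with the only non-elementary ingredient being the existence over $\C$ of a divisorial valuation that computes $\lct(\frb)$, which follows from the existence of log resolutions as in \cite[Chapter~9]{Lazarsfeld}.
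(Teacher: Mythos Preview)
Your proof is correct and follows essentially the same approach as the paper's: both directions hinge on the chain $\lct(\fra)\leq A_X(v)/v(\fra)\leq A_X(v)/v(\frb)=\lct(\frb)$ together with the inclusion $\frb\subseteq\fra_m(v)$, and you have merely repackaged the ``only if'' direction by first isolating the observation $\lct(\fra)=\lct(\fra_m(v))$. The only additions are your explicit remarks that $\frb$ is proper and that a computing valuation exists via log resolutions, both of which the paper leaves implicit.
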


\begin{proof}
Suppose first that $\fra$ is lct-maximal and
$v={\rm ord}_E$ computes $\lct(\fra)$. Let $m=v(\fra)$, so that $\fra\subseteq \frb:=\fra_m(v)$.
By hypothesis, if $\fra\neq\frb$, then $\lct(\frb)>\lct(\fra)$.
However, we have $v(\frb)=m$, hence
$$\lct(\frb)\leq \frac{A_X(v)}{v(\frb)}=\frac{A_X(v)}{m}=\lct(\fra),$$
a contradiction.

Suppose now that $\fra\subsetneq\frb$ and $\lct(\fra)=\lct(\frb)$. Let $v={\rm ord}_E$ be a divisorial valuation
that computes $\lct(\frb)$ and let $m=v(\fra)$. Since $\fra\subseteq\frb$, we have $v(\frb)\leq m$, hence
$$\lct(\fra)\leq\frac{A_X(v)}{m} \leq\frac{A_X(v)}{v(\frb)}=\lct(\frb).$$
Since $\lct(\fra)=\lct(\frb)$, we conclude that $v(\frb)=m$ and $v$ computes $\lct(\fra)$, but
$\fra\subsetneq \frb\subseteq \fra_m(v)$. This completes the proof of the proposition.
\end{proof}

\begin{question}\label{q1}
If $\fra$ is an lct-maximal ideal in $\cO_X$, can there be two distinct divisorial valuations ${\rm ord}_{E_1}$ and ${\rm ord}_{E_2}$ that compute
$\lct(\fra)$?
\end{question}

\begin{rmk}\label{rmk2_prop_char1}
Note that it follows from Proposition~\ref{prop_char1} that  if the log canonical threshold of an lct-maximal ideal $\fra$ is computed by several
divisorial valuations, then all these have the same center on $X$, namely the support of the subscheme defined by $\fra$.
\end{rmk}

\begin{rmk}\label{rmk_prop_char1}
It follows from Proposition~\ref{prop_char1} that if $\fra$ is an lct-maximal ideal, then $\fra$ is integrally closed and its zero-locus is irreducible.
\end{rmk}

\begin{rmk}\label{local_nature}
The notion of \emph{lct-maximal ideals} is local in the following sense. If $\fra$ is an lct-maximal ideal, then for every open subset $U\subseteq X$
that intersects the zero-locus $V(\fra)$ of $\fra$, the restriction $\fra\vert_U$ is lct-maximal. In the other direction, if $V(\fra)$ is connected and there are open subsets $U_1,\ldots,U_r$
of $X$ with $V(\fra)\subseteq\bigcup_iU_i$ and such that each ideal $\fra\vert_{U_i}$ is lct-maximal, then $\fra$ is lct-maximal. Indeed, both assertions follow easily 
from Proposition~\ref{prop_char1}.
\end{rmk}

\begin{cor}\label{cor_prop_char1}
If $\fra$ is a proper nonzero ideal of $\cO_X$ and $q$ is a positive integer such that the integral closure $\overline{\fra^q}$ of $\fra^q$
is lct-maximal, then $\overline{\fra}$ is lct-maximal.
\end{cor}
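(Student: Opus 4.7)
The plan is to reduce to Proposition~\ref{prop_char1} via the elementary observation that raising an ideal to the power $q$ does not change which divisorial valuations compute its log canonical threshold. First I would note that $\lct(\fra^q)=\lct(\fra)/q$ and $v(\fra^q)=q\,v(\fra)$ for every divisorial valuation $v$, so the ratio $A_X(v)/v(\fra^q)$ is minimized at exactly the same $v$ as $A_X(v)/v(\fra)$. Combined with the fact that passing to integral closure changes neither $\lct$ nor the $v$-value of an ideal, it follows that a divisorial valuation computes $\lct(\overline{\fra})$ if and only if it computes $\lct(\overline{\fra^q})$.

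Next, let $v$ be any divisorial valuation computing $\lct(\overline{\fra})$, and put $m=v(\overline{\fra})=v(\fra)$. By the observation above $v$ also computes $\lct(\overline{\fra^q})$, and $v(\overline{\fra^q})=qm$; the lct-maximality of $\overline{\fra^q}$ together with Proposition~\ref{prop_char1} then gives $\overline{\fra^q}=\fra_{qm}(v)$. The goal is to deduce the analogous equality $\overline{\fra}=\fra_m(v)$. One inclusion is immediate, since any $g\in\overline{\fra}$ satisfies $v(g)\geq v(\overline{\fra})=m$. For the other, I would take $f\in\fra_m(v)$, and observe that $v(f^q)\geq qm$, so that $f^q\in\fra_{qm}(v)=\overline{\fra^q}$.

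The final step is to extract $f\in\overline{\fra}$ from $f^q\in\overline{\fra^q}$. Here I would invoke the standard valuation-theoretic description of integral closure on a smooth (or normal Noetherian) variety, namely $\overline{\frb}=\{g\in\cO_X:w(g)\geq w(\frb)\ \text{for every divisorial valuation}\ w\}$, combined with the identity $w(\overline{\fra^q})=w(\fra^q)=q\,w(\fra)$. The hypothesis $f^q\in\overline{\fra^q}$ then yields $q\,w(f)\geq q\,w(\fra)$ for every divisorial $w$, hence $w(f)\geq w(\fra)$ for every such $w$, and so $f\in\overline{\fra}$. Having established $\overline{\fra}=\fra_m(v)$ for every $v$ computing $\lct(\overline{\fra})$, a second application of Proposition~\ref{prop_char1} delivers the lct-maximality of $\overline{\fra}$. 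I do not foresee any real obstacle in this argument; it is essentially a translation of the characterization of lct-maximality through the compatible scaling $v\mapsto v$ under the operations of taking $q$-th powers and integral closures.
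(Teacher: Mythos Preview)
Your argument is correct and follows essentially the same route as the paper's proof: both reduce to Proposition~\ref{prop_char1}, use that the divisorial valuations computing $\lct(\overline{\fra})$ coincide with those computing $\lct(\overline{\fra^q})$, apply the hypothesis to get $\overline{\fra^q}=\fra_{qm}(v)$, and then deduce $f\in\overline{\fra}$ from $f^q\in\overline{\fra^q}$. You are simply more explicit than the paper about that last implication, spelling out the valuative criterion for integral closure rather than leaving it implicit.
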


\begin{proof}
Note first that a divisorial valuation $v={\rm ord}_E$ computes $\lct(\overline{\fra})=\lct(\fra)$ if and only if it computes
$\lct(\overline{\fra^m})$. If this is the case and $m=v(\fra)=v(\overline{\fra})$, then it follows from the hypothesis, using
Proposition~\ref{prop_char1}, that
$$\overline{\fra^q}=\fra_{mq}(v).$$
Note now that if $f\in \fra_m(v)$, then $f^q\in \fra_{mq}(v)=\overline{\fra^q}$, hence
$f\in\overline{\fra}$. Using again Proposition~\ref{prop_char1}, we conclude that $\overline{\fra}$ is lct-maximal.
\end{proof}

In light of Proposition~\ref{prop_char1}, the following is the main question regarding the description of lct-maximal ideals:

\begin{question}
Given a divisorial valuation $v={\rm ord}_E$ over $X$ that computes a log canonical threshold, when can we find a positive integer $m$ such that $\fra_m(v)$ is lct-maximal? What are these $m$?
\end{question}

\begin{eg}\label{eg1_last}
It follows from Proposition~\ref{rat_lct_maximal} that if $H$ is an irreducible hypersurface in $X$, with rational singularities, then
$\cO_X(-H)$ is lct-maximal. In fact, $\cO_X(-jH)$ is lct-maximal for every $j\geq 1$. Indeed, a valuation $v={\rm ord}_E$
computes ${\rm lct}\big(\cO_X(-jH)\big)$ if and only if it computes ${\rm lct}\big(\cO_X(-H)\big)=1$; the argument in the
proof of
Proposition~\ref{rat_lct_maximal} then shows that this is the case if and only if $E=H$. The assertion then follows immediately from
Proposition~\ref{prop_char1}
\end{eg}

\begin{eg}\label{eg2_last}
It can happen that a divisorial valuation $v={\rm ord}_E$ computes a log canonical threshold, but no ideal $\fra_m(v)$ is lct-maximal.
For example, suppose that $v={\rm ord}_H$, where $H$ is an irreducible divisor on $X$ such that the pair $(X,H)$ is log canonical, but $H$ does not have rational
singularities. The argument in the proof of Proposition~\ref{rat_lct_maximal} shows that in this case there is an exceptional divisor $F$
over $X$ such that $w={\rm ord}_F$ computes $\lct\big(\cO_X(-H)\big)=1$.
This implies that none of the ideals $\fra_m(v)=\cO_X(-mH)$ is lct-maximal (see Remark~\ref{rmk2_prop_char1}).
\end{eg}

We end with the proof of the result stated in Introduction, which extends Examples~\ref{eg1_last} and \ref{eg2_last} to higher codimension:

\begin{proof}[Proof of Theorem~\ref{thm_char2}]
We first prove i). Let $W$ be the support of the closed subscheme defined by $\fra$.
Since $\fra$ is lct-maximal, it follows from Proposition~\ref{prop_char1} that for every divisorial valuation $v={\rm ord}_E$ that
computes $\lct(\fra)$, we have $c_X(v)=W$. In order to show that $W$ has rational singularities, after covering $W$ by affine open subsets of $X$, we see that we may
assume that $X$ is affine (see Remark~\ref{local_nature}). Fix a positive integer $k>\lct(\fra)$ and let $f_1,\ldots,f_k$ be general linear combinations, with coefficients in ${\mathbf C}$, of a system
of generators of $\fra$. If $f=f_1\cdots f_k$, then $\lct(f)=\tfrac{1}{k}\lct(\fra)$ (see for example \cite[Proposition~9.2.28]{Lazarsfeld}). Moreover, the
proof in ${\mathit loc.\,cit.}$ also shows that a divisorial valuation $v={\rm ord}_E$ computes $\lct(\fra)$ if and only if it computes $\lct(f)=c$. We thus conclude that
$W$ is a \emph{minimal log canonical center} for the pair $(X,D)$, where $D=c\cdot {\rm div}(f)$ (this means that it is minimal, with respect to inclusion, among all centers of divisorial valuations
that compute $\lct(D)=1$. In this case, Kawamata's Subadjunction theorem \cite[Theorem~1]{Kawamata} implies that $W$ has rational singularities (for the local case that we need here,
see \cite[Theorem~1.2]{FG}).

Under the assumptions in ii), it is shown in \cite[Theorem~2.1]{Mustata} that if $r={\rm codim}_XW$, then $\lct(\fra)=r$ and the only divisorial valuation
$v={\rm ord}_E$ that computes $\lct(\fra)$ is given by the unique exceptional divisor on the blow-up of $X$ along $W$ that dominates $W$
(recall that since $W$ is Gorenstein, it has rational singularities if and only if it has canonical singularities by a result of Elkik, see \cite[Theorem~11.1]{Kollar}).
The conclusion in ii) thus follows using Proposition~\ref{prop_char1}.
\end{proof}

\section*{References}
\begin{biblist}


\bib{AS}{article}{
      author={Almir\'{o}n, P.},
      author={Schulze, M.},
      title={Limit spectral distribution for non-degenerate hypersurface singularities},
      journal={preprint arXiv:2012.06360},
      date={2020},
}

\bib{AGZV}{book}{
   author={Arnold, V. I.},
   author={Gusein-Zade, S. M.},
   author={Varchenko, A. N.},
   title={Singularities of differentiable maps. Volume 1},
   series={Modern Birkh\"{a}user Classics},
   note={Classification of critical points, caustics and wave fronts;
   Translated from the Russian by Ian Porteous based on a previous
   translation by Mark Reynolds;
   Reprint of the 1985 edition},
   publisher={Birkh\"{a}user/Springer, New York},
   date={2012},
   pages={xii+382},
}

\bib{Bjork}{book}{
   author={Bj\"{o}rk, J.-E.},
   title={Rings of differential operators},
   series={North-Holland Mathematical Library},
   volume={21},
   publisher={North-Holland Publishing Co., Amsterdam-New York},
   date={1979},
}

\bib{CHerr}{article}{
   author={Cluckers, R.},
   author={Herremans, A.},
   title={The fundamental theorem of prehomogeneous vector spaces modulo
   ${\scr P}^m$},
   note={With an appendix by F. Sato},
   journal={Bull. Soc. Math. France},
   volume={135},
   date={2007},
   number={4},
   pages={475--494},
}

\bib{CMN}{article}{
author={Cluckers, R.},
author={Musta\c{t}\u{a}, M.},
author={Nguyen, K. H.},
title={Igusa's conjecture for exponential sums: optimal estimates for non-rational singularities},
journal={Forum Math. Pi},
VOLUME = {7},
     date = {2019},
     PAGES = {e3, 28pp},
        }

\bib{dFEM}{article}{
   author={de Fernex, T.},
   author={Ein, L.},
   author={Musta\c{t}\u{a}, M.},
   title={Multiplicities and log canonical threshold},
   journal={J. Algebraic Geom.},
   volume={13},
   date={2004},
   number={3},
   pages={603--615},
}

\bib{dFEM2}{article}{
author={de Fernex, T.},
author={Ein, L.},
author={Musta\c{t}\u{a}, M.},
title={Log canonical thresholds on varieties with bounded singularities},
conference={
title={Classification of algebraic varieties},
},
book={series={EMS Ser. Congr. Rep.},
publisher={Eur. Math. Soc., Z\"{u}rich},
},
date={2011},
pages={221--257},
}

\bib{Denef}{article}{
   author={Denef, J.},
   title={Report on Igusa's local zeta function},
   note={S\'{e}minaire Bourbaki, Vol. 1990/91},
   journal={Ast\'{e}risque},
   number={201-203},
   date={1991},
   pages={Exp. No. 741, 359--386 (1992)},
}

\bib{DenefLoeser}{article}{
   author={Denef, J.},
   author={Loeser, F.},
   title={Germs of arcs on singular algebraic varieties and motivic
   integration},
   journal={Invent. Math.},
   volume={135},
   date={1999},
   number={1},
   pages={201--232},
}

\bib{DenefVeys}{article}{
  AUTHOR =       {Denef, J.},
  AUTHOR =       {Veys, W.},
  TITLE =        {On the holomorphy conjecture for {I}gusa's local zeta function},
  JOURNAL =      {Proc. Amer. Math. Soc.},
  YEAR =         {1995},
  volume =       {123},
  pages =        {2981--2988},
  number =      {10}
}

\bib{DM}{article}{
 author = {Dirks, B.},
author = {Musta\c{t}\u{a}, M.},
title = {Upper bounds for roots of $b$-functions, following Kashiwara and Lichtin},
journal = {to appear in Publ. Res. Inst. Math. Sci.},
date = {2020},
}

\bib{Docampo}{article}{
   author={Docampo, R.},
   title={Arcs on determinantal varieties},
   journal={Trans. Amer. Math. Soc.},
   volume={365},
   date={2013},
   number={5},
   pages={2241--2269},
}

\bib{ELM}{article}{
   author={Ein, L.},
   author={Lazarsfeld, R.},
   author={Musta\c{t}\v{a}, M.},
   title={Contact loci in arc spaces},
   journal={Compos. Math.},
   volume={140},
   date={2004},
   number={5},
   pages={1229--1244},
}

\bib{EM}{article}{
   author={Ein, L.},
   author={Musta\c{t}\u{a}, M.},
   title={Jet schemes and singularities},
   conference={
      title={Algebraic geometry---Seattle 2005. Part 2},
   },
   book={
      series={Proc. Sympos. Pure Math.},
      volume={80},
      publisher={Amer. Math. Soc., Providence, RI},
   },
   date={2009},
   pages={505--546},
}

\bib{FG}{article}{
   author={Fujino, O.},
   author={Gongyo, Y.},
   title={On canonical bundle formulas and subadjunctions},
   journal={Michigan Math. J.},
   volume={61},
   date={2012},
   number={2},
   pages={255--264},
}

\bib{Igusa}{article}{
   author={Igusa, J.-i.},
   title={Complex powers and asymptotic expansions. II. Asymptotic
   expansions},
   journal={J. Reine Angew. Math.},
   volume={278/279},
   date={1975},
   pages={307--321},
}
\bib{Igusa3}{book}{
  author  = {Igusa, J.-i.},
  title   = {Lectures on forms of higher degree (notes by {S}. {R}aghavan)},
  year    = {1978},
  publisher = {Springer-Verlag},
 series =       {Lectures on mathematics and physics, Tata institute of fundamental research},
  volume =       {59},
   }

\bib{Kawamata}{article}{
   author={Kawamata, Y.},
   title={Subadjunction of log canonical divisors. II},
   journal={Amer. J. Math.},
   volume={120},
   date={1998},
   number={5},
   pages={893--899},
   issn={0002-9327},
}

\bib{Kimura}{book}{
   author={Kimura, T.},
   title={Introduction to prehomogeneous vector spaces},
   series={Translations of Mathematical Monographs},
   volume={215},
   note={Translated from the 1998 Japanese original by Makoto Nagura and
   Tsuyoshi Niitani and revised by the author},
   publisher={American Mathematical Society, Providence, RI},
   date={2003},
}

\bib{Kollar}{article}{
   author={Koll\'ar, J.},
   title={Singularities of pairs},
   conference={
      title={Algebraic geometry---Santa Cruz 1995},
   },
   book={
      series={Proc. Sympos. Pure Math.},
      volume={62},
      publisher={Amer. Math. Soc., Providence, RI},
   },
   date={1997},
   pages={221--287},
}

\bib{KollarMori}{book}{
author={Koll\'ar, J.},
author={Mori, S.},
title={Birational geometry of algebraic varieties},
series={Cambridge Tracts in Mathematics},
volume={134},
note={With the collaboration of C. H. Clemens and A. Corti;
Translated from the 1998 Japanese original},
publisher={Cambridge University Press, Cambridge},
date={1998},
pages={viii+254},
}

\bib{Lazarsfeld}{book}{
       author={Lazarsfeld, R.},
       title={Positivity in algebraic geometry II},
       series={Ergebnisse der Mathematik und ihrer Grenzgebiete},
       volume={49},
       publisher={Springer-Verlag, Berlin},
       date={2004},
}

\bib{Matsumura}{book}{
   author={Matsumura, H.},
   title={Commutative ring theory},
   series={Cambridge Studies in Advanced Mathematics},
   volume={8},
   edition={2},
   note={Translated from the Japanese by M. Reid},
   publisher={Cambridge University Press, Cambridge},
   date={1989},
   pages={xiv+320},
}

\bib{Milnor}{book}{
   author={Milnor, J.},
   title={Morse theory},
   series={Annals of Mathematics Studies, No. 51},
   note={Based on lecture notes by M. Spivak and R. Wells},
   publisher={Princeton University Press, Princeton, N.J.},
   date={1963},
   pages={vi+153},
}

\bib{Mustata}{article}{
   author={Musta\c{t}\u{a}, M.},
   title={Jet schemes of locally complete intersection canonical
   singularities},
   note={With an appendix by D.~Eisenbud and E.~Frenkel},
   journal={Invent. Math.},
   volume={145},
   date={2001},
   number={3},
   pages={397--424},
}



\bib{MP}{article}{
   author={Musta\c t\u a, M.},
   author={Popa, M.},
  title={Hodge ideals for ${\mathbf Q}$-divisors, $V$-filtration, and minimal exponent},
  journal={Forum of Math., Sigma},
  volume={8},
  date={2020},
  pages={e19, 41pp},
}

\bib{Nguyen:n/2d}{article}{
  author =       {Nguyen, K.~H.},
  title =        {On a uniform bound for exponential sums modulo $p^m$ for {D}eligne polynomials},
  journal =      {preprint arXiv:2111.11898},
  year =         {2021},
  volume =       {},
  number =       {},
  pages =        {},
  month =        {},
  }

\bib{Saito-B}{article}{
   author={Saito, M.},
   title={On $b$-function, spectrum and rational singularity},
   journal={Math. Ann.},
   volume={295},
   date={1993},
   number={1},
   pages={51--74},
}

\bib{Tougeron}{book}{
   author={Tougeron, J.-C.},
   title={Id\'{e}aux de fonctions diff\'{e}rentiables},
   series={Ergebnisse der Mathematik und ihrer Grenzgebiete, Band 71},
   publisher={Springer-Verlag, Berlin-New York},
   date={1972},
   pages={vii+219},
}




\bib{Yano}{article}{
author={Yano, T.},
   title={On the theory of $b$-functions},
   journal={Publ. Res. Inst. Math. Sci.},
   volume={14},
   date={1978},
   number={1},
   pages={111--202},
}

\bib{VeysZ}{article}{
    AUTHOR = {Veys, W.},
        AUTHOR = {Z{\'u}{\~n}iga-Galindo, W. A.},
     TITLE = {Zeta functions for analytic mappings, log-principalization of ideals, and {N}ewton polyhedra},
   JOURNAL = {Trans. Amer. Math. Soc.},
    VOLUME = {360},
      date = {2008},
    NUMBER = {4},
     PAGES = {2205--2227},
}

\bib{Zhu}{article}{
   author={Zhu, Z.},
   title={Log canonical thresholds in positive characteristic},
   journal={Math. Z.},
   volume={287},
   date={2017},
   number={3-4},
   pages={1235--1253},
}

\end{biblist}

\end{document}